\DeclarePairedDelimiter{\abs}{\lvert}{\rvert}
\DeclarePairedDelimiter{\norma}{\lVert}{\rVert}
\newcommand{\spL}{H}
 \newtheorem{thm}{Theorem}[section]
 \newtheorem{lem}[thm]{Lemma}
 \newtheorem{prop}[thm]{Proposition}
 \theoremstyle{definition}
 \newtheorem{defn}[thm]{Definition}
 \theoremstyle{remark}
 \newtheorem{rem}[thm]{Remark}
 \newtheorem*{ex}{Example}
 \numberwithin{equation}{section}
\begin{document}

%
%
%
%
%
%
%
%
%
\author[L.~Angelini]{Lucia Angelini}
\address{Lucia Angelini, Department of Mathematics and Computer Science, University of Perugia, Via Luigi Vanvitelli, 1, I-06123 Perugia, Italy.}\email{lucia.angelini92@gmail.com}
\author[I.~Benedetti]{Irene Benedetti\, 
}
\address{Irene Benedetti, Department of Mathematics and Computer Science, University of Perugia, Via Luigi Vanvitelli, 1, I-06123 Perugia, Italy.}\email{irene.benedetti@unipg.it}
\author[A.~Cretarola]{Alessandra Cretarola\, 
}
\address{Alessandra Cretarola, Department of Economic Studies,
 University "G. D'Annunzio'' of Chieti-Pescara,
Viale Pindaro, 42, I-65127 Pescara, Italy.}\email{alessandra.cretarola@unich.it}
\title[Existence of solutions of stochastic evolution inclusions]
 {Existence of nonnegative mild solutions of stochastic evolution inclusions via weak topology}


\date{}

\begin{abstract}
This paper addresses the existence of nonnegative mild solutions for stochastic evolution inclusions through a weak topology approach. Precisely, the study focuses on stochastic evolution inclusions characterized by multivalued nonlinearities and perturbed by a $Q$-Wiener process within a Hilbert space framework. The primary objective is to establish the existence of mild solutions under conditions that ensure sublinear growth for the involved functions and multivalued mappings. By means of the weak topology method, we establish the existence of at least one mild solution in an appropriate function space. Additionally, when the associated semigroup is positive and a specific sign condition is met, we show the existence of nonnegative solutions. The methodological approach involves approximating the problem via truncation on bounded intervals and applying fixed-point theorems in weak topology to extend local solutions to the entire half-line. 
\end{abstract}

\maketitle

{\bf Keywords}: Stochastic evolution inclusions, mild solutions, weak topology, nonnegative solutions.
{\bf 2020 Mathematics Subject Classification}: Primary 60H10; Secondary 28B20, 34A06.

\section{Introduction}
Stochastic inclusions naturally arise as reduced or theoretical formulations of stochastic control problems, see, for example, the seminal work \cite{aubin1998viability}. Following the approach outlined in that paper, let $\mathbb{R}^n$ denote the state space of a control system and $\mathbb{R}^m$ the space in which control values lie. The dynamics of the system is governed by two mappings: $f : [0,+\infty) \times \mathbb{R}^n \to \mathbb{R}^n$, $g : [0,+\infty) \times \mathbb{R}^n \times \mathbb{R}^m \to \mathcal{L}(\mathbb{R}^m, \mathbb{R}^n)$, where $\mathcal{L}(\mathbb{R}^m, \mathbb{R}^n)$ denotes the space of linear maps from $\mathbb{R}^m$ to $\mathbb{R}^n$. The feedback condition, which restricts the admissible choices of control at a given time and state of the system, is expressed by the relation $v_t(\omega) \in V(u_t(\omega))$, with $t > 0$ and $\omega \in \Omega$, where $(\Omega,\mathcal{A},\mathbf P)$ is a probability measure space and $V$ is a prescribed set. The resulting description of the control system can thus be formulated as:
\begin{equation}
\begin{array}{l}
 du_t = f(t, u_t)dt + g(t,u_t,v_t) dW_t, \quad t > 0, \\
 \mbox{for almost all} \; (\omega,t) \in \Omega \times [0,+\infty), v_t(\omega) \in V(u_t(\omega)),
 \end{array}
\end{equation}
where $W=(W_t)_{t \geq 0}$ is a given $\mathbb{R}^m$-valued Wiener process. The problem described above fits into the following framework:
\begin{equation}
   du_t = f(t, u_t)dt + G(t,u_t) dW_t, \quad t > 0, 
\end{equation}
where $G: [0,+\infty) \times \mathbb{R}^n \to P(\mathcal L(\mathbb{R}^m,\mathbb{R}^n))$ is a set-valued map defined by $G(t,u_t)=g(t,u_t,V(u_t))$, here, given a space $E$, $P(E)$  refers to the collection of all nonempty subsets of the space $E$. 

\noindent The authors in \cite{aubin1998viability} study differential inclusions in a finite-dimensional setting, where the set-valued drift and diffusion terms are jointly upper semicontinuous. An alternative set of assumptions on the set-valued integrands involves lower semicontinuity with respect to the second variable; see \cite{kisielewicz2001weak}, where the author proves the existence of solutions for stochastic differential inclusions, still within a finite-dimensional framework. We refer to the monograph \cite{kisielewicz2013stochastic} for a comprehensive study of stochastic differential inclusions in the finite dimensional setting.

\noindent We consider a semilinear stochastic differential inclusion in infinite dimensional Hilbert spaces. Namely, the problem under consideration is the following
\begin{equation}
\label{inclusione}
\begin{cases}
du_t \in [Au_t + f(t, u_t)]dt + \Sigma(t,u_t)dW_t, \quad t > 0 \\
u(0)=u_0,
\end{cases}
\end{equation}
where $W=(W_t)_{t \geq 0}$ is a given $K$-valued $Q$-Wiener process, with $Q:K \to K$ a trace class operator, where $K$ is a separable Hilbert space, $u_0 \in L^2(\Omega,H)$, $A:D(A) \subset H \to H$ is the infinitesimal generator of a strongly continuous semigroup $\{T(t)\}_{t\ge 0}$ in $H$, $f : [0,+\infty) \times \spL \to \spL$ is a linear operator, and $\Sigma : [0,+\infty) \times \spL \to P(\mathcal L_2^0)$ is a multivalued nonlinear operator, with $H$ another separable Hilbert space and $\mathcal L_2^0$ denoting the space of Hilbert-Schmidt operators.

\noindent For the problem \eqref{inclusione} we prove the existence of at least one mild solution on the whole half line. Additionally, when the associated semigroup is positive and a specific sign condition is met, we show the existence of nonnegative solutions. 

\noindent The existence of solutions to these problems is often investigated using topological methods, particularly fixed point theorems applied to an appropriate solution operator. However, such approaches typically require strong compactness assumptions, which are generally not satisfied in infinite-dimensional settings when the evolution operator associated with $A$ is not compact. As an alternative, Lipschitz-type conditions can be imposed on the drift and diffusion terms. The condition that the linear part generates a compact semigroup is adopted in \cite{toufik2015existence}, in the context of standard stochastic differential inclusions; in \cite{balasubramaniam2006controllability}, which deals with nonlinear neutral stochastic differential inclusions; and more recently in \cite{dineshkumar2022discussion} and \cite{kumar2024new}, in the setting of fractional stochastic differential inclusions. In all these works, the diffusion term is multivalued. On the other hand, in \cite{da1994stochastic}, the existence of both strong and mild solutions to an inclusion of the form \eqref{inclusione} is established under a global Lipschitz condition. This assumption is relaxed in \cite{jakubowski2005existence}. In \cite{ren2011controllability}, a global Lipschitz condition is again employed to establish both existence and controllability of solutions in the setting of neutral stochastic differential inclusions.

\noindent Here, we apply a technique based on weak topology, that allows us to avoid hypotheses of compactness both on the semigroup generated by the linear part and on the terms $f$ and $\Sigma$ and does not require any Lipschitz continuity assumptions on the nonlinearities. In particular, this approach allows us to treat a class of nonlinear multivalued maps $\Sigma$ which are not necessarily compact-valued. This technique was developed for deterministic semilinear differential inclusions in \cite{benedetti2013nonlocal} and was used for the same type of stochastic differential inclusions of \eqref{inclusione} in \cite{zpa2018}, see also \cite{zhou}. Specifically, in \cite{zpa2018}, the authors prove the existence of at least one mild solution for the problem \eqref{inclusione} restricted on a bounded interval $[0,b]$. They consider the same regularity assumptions as we do, see hypotheses $(H_A),(H_1)-(H_2)-(H_4)-(H_5)$, however they assume stronger growth conditions on the diffusion term $\Sigma$. In particular, in \cite[Theorem 4.1]{zpa2018} the authors assume the following assumption:
\begin{itemize}
\item[($H_6^\prime$)] for every $n>0$, there exists a function $\mu_n \in L^1([0,b], \mathbb{R}^{+})$ such that for each $u \in \spL$, $\norma {u}_H^2 \le n$, 
        \begin{equation*}
        \norma {\Sigma(t, u)}^2_{\mathcal{L}_2^0} \le \mu_n(t) \quad \text{for a.e.} \quad t \in [0,b],
        \end{equation*}
        and
\begin{equation}
    \label{eq main}
\liminf_{n \to \infty} \frac{1}{n} \int_0^{+\infty} \mu_n(s) \,ds = 0.
\end{equation}
\end{itemize}
Notice that conditions $(H_6^\prime)$ imply the sublinear growth conditions $(H_6)$ that we require:
\begin{itemize}
     \item[($H_6$)] there exists a function $C_\Sigma \in L^1([0,+\infty),\mathbb{R}_+)$ such that $\norma{\Sigma(t,u)}_{\mathcal{L}_2^0}^2  \le C_\Sigma(t) (1 + \norma{u}_\spL) $ for all $t \in [0,+\infty)$ and $u \in \spL$.
\end{itemize}
To show this implication, we notice that by \eqref{eq main}, we can consider a subsequence, denoted as the sequence $\{\mu_n\}$ such that the condition \eqref{eq main} yields again. Hence, we have that the subsequence $\{\frac{1}{n} \mu_n \}$ converges to the null function in $L^1([0,+\infty), H)$. Then, there exist a subsequence, called as the sequence $\{\frac{1}{n} \mu_n \}$ and a function $h \in L^1([0,+\infty), \mathbb{R}^+)$ such that 
\begin{equation}
\label{cond. ipotesi}
    \frac{1}{n}\mu_n(t) \le h(t) \quad \text{for all} \, \, n \, \, \text{and a.e.} \,\, t \in [0,+\infty),
\end{equation}
see \cite[Theorem 4.9]{brezis}. We fix $t \in [0,+\infty)$ such that verifies the condition \eqref{cond. ipotesi}. Let $u \in H$ and $n = n(u) = 1, 2, \dots$ such that $n -1 \le \norma{u}_H \le n$. By \eqref{cond. ipotesi} we have
\begin{equation*}
     \norma {\Sigma(t, u)}^2_{\mathcal{L}_2^0} \le \mu_n(t) \le nh(t) \le h(t) (1+ \norma{u}_H), \,  \text{a.e.} \,\, t \in [0,+\infty),
\end{equation*}
 obtaining the claimed result. In all the other existence results proven in \cite{zpa2018} a sublinear condition such as $(H_6)$ is required as well, but is always associated to some extra conditions that are not needed in our proofs. Furthermore, when comparing our results with those in \cite{zpa2018}, we found it necessary to assume the linearity of the term \( f \) and the convexity of the set-valued mapping \( \Sigma \) with respect to the second variable. This requirement arises from the need to properly handle regularity with respect to the weak topology. In particular, to prove the closedness of the solution operator, one must consider a sequence \( u^m \) converging weakly in \( L^2(\Omega, H) \). However, both \( f \) and \( \Sigma \) are regular only with respect to the weak topology in \( H \), and it is not true in general that a sequence converging weakly in \( L^2(\Omega, H) \) admits a subsequence converging weakly in \( H \) almost surely. For instance, the sequence 
\[
u^m(\omega) = \sin(m\omega)
\]
converges weakly in \( L^2([-\pi,\pi], \mathbb{R}) \), but has no subsequence converging almost everywhere in \( \mathbb{R} \). To overcome this issue, we introduced the assumptions of linearity of \( f \) and convexity of \( \Sigma \) in the second variable.

\noindent The existence of a nonnegative mild solution in the large is also established in \cite{marinelli} and \cite{marinelli2022positivity}. However, both works consider single-valued drift and diffusion terms. In \cite{marinelli2022positivity}, the semigroup generated by the linear part is assumed to be positive and sub-Markovian, while in \cite{marinelli}, the semigroup is only assumed to be positive, but the drift and diffusion terms are required to be Lipschitz continuous.

\noindent In \cite{marinelli}, the assumption that $A$ generates a positive semigroup can be made without additional conditions, since the Lipschitz continuity of the nonlinear terms ensures the existence of at least one mild solution to the corresponding stochastic differential equation, as well as the convergence of the sequence of strong solutions of the regularized problems obtained by replacing $A$ with its Yosida approximation.

\noindent In the more general setting considered in \cite{marinelli2022positivity}, where drift and diffusion coefficients are not assumed to be Lipschitz continuous, it is not even clear whether the regularized equation admits a solution at all. For this reason, the authors of \cite{marinelli2022positivity} introduce a different approximation scheme and require that $A$ generates a sub-Markovian semigroup, rather than merely a positive one.

\noindent We highlight the fact that our approach allows us to dispense with both the sub-Markovian property of the semigroup generated by $A$ and the Lipschitz continuity assumptions on the drift and diffusion coefficients. Moreover, in contrast to \cite{marinelli} and \cite{marinelli2022positivity}, we are able to handle a multivalued diffusion term, thus significantly broadening the class of admissible stochastic perturbations. 

\noindent As a final remark, we emphasize that, under the assumptions considered in this paper, we are able to prove that for every $k \in \mathbb{N}$ and every $u \in \mathcal{C}^k$, the set of selections of the multivalued map $\Sigma$
\begin{equation*}
Sel_{\Sigma}^k(u) = \left\{\sigma \in L^2_{\mathscr{F}}([0,k], \mathcal L_2^0) : \sigma_t \in \Sigma(t, u_t) \quad \text{for a.e.} \, t \in [0,k] \right\}
\end{equation*}
is nonempty, where $L^2_{\mathscr{F}}([0,k], \mathcal L_2^0)$ is the space of all $\mathbb{F}$-progressively measurable processes defined on $[0,+\infty)$ with values in $\mathcal{L}_2^0$. This result is deduced from our hypotheses and is not assumed a priori. In contrast, many works in the literature take the nonemptiness of $Sel_{\Sigma}^k(u)$ for granted for every $u \in \mathcal{C}^k$; see, for instance, \cite{dineshkumar2022discussion, dineshkumar2022results, kumar2024new, toufik2015existence}.

\noindent The remainder of the paper is structured as follows. Section \ref{sec:prelim} gathers background material on the theory of semigroups, trace‑class and Hilbert–Schmidt operators, multivalued mappings, and stochastic processes that will be used throughout. In Section \ref{sec:SEI} we state the stochastic evolution inclusion that is the focus of the study and establish our first main result: existence of a mild solution on the infinite horizon.
Section \ref{sec:approx} introduces a family of truncated problems whose analysis paves the way for extending the existence result to the entire non‑negative half‑line, a task achieved in Section \ref{sec:proof}.
Finally, Section \ref{sec:nonneg} contains our second main result, proving that the mild solution is non‑negative under suitable conditions.

\section{Some preliminaries}\label{sec:prelim}

\noindent Let $X$ be a Banach space with the norm $\norma{\, \cdot \,}_X$ and $J = [a,b] $, with $- \infty < a < b < \infty$, be a finite interval of $\mathbb{R}$. Denote by $C(J,X)$ the Banach space of all continuous functions from $J$ into $X$ with the norm 
\begin{equation}
    \norma{u}= \sup_{t \in J} \norma{u(t)}_X,
\end{equation}
where $u \in  C(J,X)$. Let $1\le p \le \infty$. We denote by $L^{p}(J,X)$ the set of all strongly measurable functions $f: J \to X$ such that $\norma{f}_{L^p(J,X)} < \infty$, where
\begin{equation}
    \norma{f}_{L^p(J,X)} = 
    \begin{cases}
    (\int_J \norma{f(t)}_X^p \, dt)^{\frac{1}{p}} & 1 \le p < \infty \\
    {\rm ess} \sup_{t \in J}\norma{f(t)}_X & p=\infty.
    \end{cases}
\end{equation}
We denote with $\|\cdot\|_p$ the usual norm in the space $L^p(\Omega,\mathbb{R})$, with $1 \leq p \leq \infty$ and $\Omega \subset \mathbb{R}^n$ a $\sigma$-finite measurable set.

\noindent We introduce the weak topology and recall some well known results; for the proofs and further details we refer to \cite{brezis}.

\begin{thm}
\label{zhou th 1.26}
Let $D$ be a subset of a Banach space $X$. The following statements are equivalent:
\begin{itemize}
\item[(i)] $D$ is relatively weakly compact;
\item[(ii)] $D$ is relatively weakly sequentially compact.
\end{itemize} 
\end{thm}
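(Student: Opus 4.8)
The statement is the Eberlein--\v{S}mulian theorem, and the plan is to prove the two implications separately, as they are of rather different nature.

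\emph{Sketch of $(i)\Rightarrow(ii)$.} Given a sequence $(x_n)$ in $D$, I would work inside the closed linear span $Y=\overline{\operatorname{span}}\{x_n:n\in\mathbb N\}$, a separable closed — hence weakly closed — subspace of $X$, and consider $K:=\overline{D}^{\,w}\cap Y$. By $(i)$ the weak closure $\overline{D}^{\,w}$ is weakly compact, so $K$ is weakly compact, and it contains every $x_n$. The crucial point is that the weak topology on the compact set $K$ is metrizable: exploiting the separability of $Y$ one selects a countable family $(f_k)\subset B_{X^*}$ that is norming for $Y$, so the topology generated on $K$ by the seminorms $|f_k(\cdot)|$ is Hausdorff, metrizable, and coarser than the weak topology; a continuous bijection from a compact space onto a Hausdorff space is a homeomorphism, hence the two topologies coincide on $K$. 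A compact metrizable space is sequentially compact, so $(x_n)$ admits a weakly convergent subsequence with limit in $K\subset X$, which proves that $D$ is relatively weakly sequentially compact.

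\emph{Sketch of $(ii)\Rightarrow(i)$.} First I would check that $D$ is norm bounded: an unbounded sequence in $D$ would, by $(ii)$, possess a weakly convergent — hence, by the uniform boundedness principle, bounded — subsequence, a contradiction. Identifying $X$ with its canonical image in $X^{**}$, the set $K^{**}:=\overline{D}^{\,w^*}\subset X^{**}$ is then weak-$*$ compact by the Banach--Alaoglu theorem. Since the weak-$*$ topology of $X^{**}$ restricts to the weak topology of $X$ on the subspace $X$, it is enough to prove $K^{**}\subseteq X$; then $K^{**}$ is a weakly compact subset of $X$ containing $D$. To show $K^{**}\subseteq X$, fix $\varphi\in K^{**}$ and construct, inductively and in an interlaced fashion, a sequence $(x_n)\subset D$ and norm-one functionals $(f_j)\subset X^*$ so that, at each stage $n$, the functionals chosen so far include a finite set that is $2$-norming for the finite-dimensional space $\operatorname{span}\{\varphi,x_1,\dots,x_n\}$ (possible because the unit sphere in finite dimensions is compact), and so that $x_{n+1}\in D$ is then picked — using $\varphi\in\overline{D}^{\,w^*}$ — with $|(\varphi-x_{n+1})(f_j)|<1/(n+1)$ for every previously selected $f_j$. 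By $(ii)$ there is a subsequence $x_{n_k}\rightharpoonup x$ with $x\in X$; for each fixed $j$ the approximation conditions give $f_j(x_{n_k})\to\varphi(f_j)$ and weak convergence gives $f_j(x_{n_k})\to f_j(x)$, so $\varphi$ and $x$ agree on every $f_j$. Since $\varphi-x$ lies in the closed span of $\{\varphi,x_1,x_2,\dots\}$ in $X^{**}$, on which $(f_j)$ is $2$-norming, we conclude $\varphi=x\in X$.

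\emph{Main obstacle.} The delicate step is the interlaced construction in $(ii)\Rightarrow(i)$: one must insert the norming functionals for $\operatorname{span}\{\varphi,x_1,\dots,x_n\}$ \emph{before} choosing $x_{n+1}$, so that the countable family $(f_j)$ ends up $2$-norming on all of $\overline{\operatorname{span}}\{\varphi,x_1,x_2,\dots\}\subset X^{**}$, while the successive approximation conditions still force $f_j(x_p)\to\varphi(f_j)$ for each fixed $j$. Managing this simultaneous bookkeeping correctly, together with verifying that the resulting $(f_j)$ is genuinely norming on the relevant closed subspace, is where the real work lies; by comparison, the metrizability argument and the boundedness reduction used above are routine.
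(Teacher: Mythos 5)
This statement is the Eberlein--\v{S}mulian theorem, which the paper does not prove: it is quoted as a classical result with a reference to Br\'ezis, so there is no in-paper argument to compare yours against. Your sketch is a correct outline of the standard proof (essentially Whitley's): the easy direction via the observation that the weak topology is metrizable on a weakly compact subset of the separable subspace $\overline{\operatorname{span}}\{x_n\}$ (countable norming family, continuous bijection from a compact space onto a Hausdorff space), and the hard direction via boundedness, Banach--Alaoglu in $X^{**}$, and the interlaced choice of points of $D$ and finitely many functionals that are $2$-norming on $\operatorname{span}\{\varphi,x_1,\dots,x_n\}$, with the $2$-norming property passing to the closed span by density so that $\varphi=x\in X$. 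The bookkeeping you flag is indeed the only delicate point, and your description of it is accurate; I see no gap.
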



\begin{thm}
\label{zhou th 1.27}
The convex hull of a weakly compact set in a Banach space $X$ is weakly compact.
\end{thm}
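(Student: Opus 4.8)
The statement is the Kre\u{\i}n--\v{S}mulian theorem. Since the non-closed convex hull of a weakly compact set is in general not weakly closed, I read it as the assertion that the closed convex hull $\overline{\mathrm{co}}(K)$ of a weakly compact set $K\subset X$ is weakly compact. By Mazur's lemma $\overline{\mathrm{co}}(K)$ equals the weak closure of $\mathrm{co}(K)$, so by Theorem~\ref{zhou th 1.26} it is enough to show that $\mathrm{co}(K)$ is relatively weakly sequentially compact, i.e.\ that every sequence $(y_n)$ in $\mathrm{co}(K)$ has a weakly convergent subsequence. My first move is a reduction to a separable space: writing $y_n=\sum_i\lambda_i^{(n)}x_i^{(n)}$ as finite convex combinations of elements of $K$, the countable set $A$ of all the $x_i^{(n)}$ spans a separable closed — hence weakly closed — subspace $X_0=\overline{\mathrm{span}}\,A$, and $K_0:=\overline{A}^{\,w}$ is a weakly closed subset of $K$, hence weakly compact, it is contained in $X_0$, and $(y_n)\subset\mathrm{co}(K_0)$; since the weak topology of $X_0$ is induced by that of $X$, we may assume $X$ separable. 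In that case $B_{X^*}$ is weak$^*$-metrizable, so $X^*$ has a countable weak$^*$-dense, hence point-separating, subset; a coarser Hausdorff topology on the compact set $(K,\mathrm{weak})$ must coincide with it, so $K$ is weakly metrizable, i.e.\ a compact metric space.

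Next I would represent points of $\mathrm{co}(K)$ as barycenters. For a Radon probability measure $\mu$ on the compact metric space $K$, the formula $\langle z_\mu,x^*\rangle:=\int_K\langle x,x^*\rangle\,d\mu(x)$ defines an element $z_\mu\in X^{**}$ (the integrand is weakly continuous on $K$, hence bounded, and $|\langle z_\mu,x^*\rangle|\le(\sup_{x\in K}\|x\|)\,\|x^*\|$). Taking $\mu_n:=\sum_i\lambda_i^{(n)}\delta_{x_i^{(n)}}\in P(K)$ gives $z_{\mu_n}=J(y_n)$, where $J\colon X\to X^{**}$ is the canonical embedding. Since $K$ is compact metric, $C(K)$ is separable and $P(K)$ is weak$^*$-compact and metrizable, so a subsequence $\mu_{n_j}$ converges weak$^*$ to some $\mu\in P(K)$; as $\langle\cdot,x^*\rangle\in C(K)$ for each $x^*\in X^*$, this yields $\langle J(y_{n_j}),x^*\rangle=\langle z_{\mu_{n_j}},x^*\rangle\to\langle z_\mu,x^*\rangle$ for every $x^*$, i.e.\ $J(y_{n_j})\to z_\mu$ in $\sigma(X^{**},X^*)$. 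If $z_\mu\in J(X)$, this is precisely weak convergence of $(y_{n_j})$ in $X$, and the proof is finished.

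So everything comes down to showing $z_\mu\in J(X)$ for $\mu\in P(K)$. Because $X$ is separable, $B_{X^*}$ is weak$^*$-metrizable, and by a standard consequence of the Kre\u{\i}n--\v{S}mulian theorem on weak$^*$-closed convex sets (equivalently the Banach--Dieudonn\'e theorem) a linear functional on $X^*$ belongs to $J(X)$ as soon as its restriction to $B_{X^*}$ is weak$^*$-continuous — and by metrizability it is enough that it be weak$^*$-sequentially continuous there. Now if $x_k^*\to x^*$ weak$^*$ with $\|x_k^*\|\le 1$, then $\langle x,x_k^*\rangle\to\langle x,x^*\rangle$ for each $x\in K$ while $|\langle x,x_k^*\rangle|\le\sup_{x\in K}\|x\|<\infty$, so the dominated convergence theorem — now applicable, the index being an integer — gives $\langle z_\mu,x_k^*\rangle\to\langle z_\mu,x^*\rangle$. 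Hence $z_\mu\in J(X)$, which completes the argument.

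The crux is this last step: that the barycenter $z_\mu$, a priori only an element of the bidual, actually lies in $X$ — this is the genuine content of Kre\u{\i}n--\v{S}mulian, everything else being soft (Mazur's lemma, Theorem~\ref{zhou th 1.26}, the metrizability of $P(K)$ for compact metric $K$, and the bookkeeping of the separable reduction). It is exactly here that the two preparatory steps earn their keep: the weak compactness of $K$ makes every $\langle\cdot,x^*\rangle$ continuous and bounded on $K$, so the barycenter is well defined and the integrand in the dominated-convergence estimate is uniformly bounded; and the reduction to a separable $X$ turns weak$^*$-continuity on $B_{X^*}$ into a statement about sequences, which is what makes dominated convergence legitimate.
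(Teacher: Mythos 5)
Your proof is correct; there is simply nothing in the paper to compare it with, because the paper does not prove this statement at all — it is recalled as a classical result (the Krein--\v{S}mulian weak compactness theorem) with a pointer to \cite{brezis}. Your reading of the statement as concerning the \emph{closed} convex hull is the right one, since $\mathrm{co}(K)$ itself need not be weakly closed, and it matches the only place the theorem is used in the paper (Step 4 of the proof of Theorem \ref{zhou th 8.1}, where the set $\widetilde{V}=\overline{co}(V)$ is taken). The argument you give — reduction via Theorem \ref{zhou th 1.26} to relative weak sequential compactness of $\mathrm{co}(K)$, the separable reduction, metrizability of $(K,\mathrm{weak})$, representation of the $y_n$ as barycenters of finitely supported probability measures on $K$, extraction of a weak$^*$ convergent subsequence of measures, and identification of the limit barycenter as an element of $X$ via the weak$^*$ Krein--\v{S}mulian/Banach--Dieudonn\'e criterion combined with dominated convergence on the metrizable ball $B_{X^*}$ — is a complete and standard proof; in particular, the ``Krein--\v{S}mulian theorem on weak$^*$-closed convex sets'' you invoke is a genuinely different theorem from the one being proved, so there is no circularity, and the boundedness of $K$ (weakly compact sets are norm bounded) that you use implicitly is standard. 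A small simplification of your last step: since $(K,\mathrm{weak})$ is compact metric and the ambient space has been made separable, the identity $K\hookrightarrow X$ is weakly continuous, hence by Pettis' measurability theorem strongly measurable and bounded, so the barycenter of $\mu$ is a Bochner integral and lies in $X$ directly, which lets you bypass the Banach--Dieudonn\'e argument altogether.
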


\noindent Finally, in the sequel we will use the following result that is an easy consequence of \cite[Corollary 2.6]{diestel1993weak}
\begin{thm}
    \label{weakcompL1}
Let $(E,\mathcal{E})$ be a finite measure space and $K \subset L^1(E,X)$ be a uniformly integrable set of functions with $X$ a reflexive Banach space. Then, $K$ is relatively weakly compact in $L^1(E,X)$. 
\end{thm}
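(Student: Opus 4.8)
The plan is to deduce the statement from the Dunford--Pettis-type criterion in \cite[Corollary 2.6]{diestel1993weak}, which characterizes relative weak compactness in $L^1(E,X)$ for $X$ reflexive. The only gap between that corollary and what we want is that the cited result is typically phrased in terms of two conditions: (a) uniform integrability of $K$, and (b) a ``flat concentration'' or tightness condition asserting that for every $\varepsilon>0$ there is a weakly compact set $W_\varepsilon \subset X$ such that each $f \in K$ spends all but $\varepsilon$ of its mass (in measure) inside $W_\varepsilon$. When $X$ is reflexive, every closed ball of $X$ is weakly compact, so condition (b) reduces to a statement that is automatically implied by uniform integrability. Thus the proof is essentially the verification that (a) $\Rightarrow$ (b) under reflexivity.

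First I would fix $\varepsilon>0$ and use uniform integrability of $K$ to produce a constant $M=M(\varepsilon)$ controlling the tails: more precisely, uniform integrability gives $\delta>0$ so that $\int_B \norma{f(t)}_X\,dt < \varepsilon$ whenever $\mathcal{E}(B) < \delta$ and $f \in K$, and (since $(E,\mathcal{E})$ is a finite measure space) it also forces $\sup_{f \in K}\int_E \norma{f(t)}_X\,dt =: S < \infty$. Next, by Chebyshev's inequality, for the set $E_f^M = \{ t \in E : \norma{f(t)}_X > M \}$ one has $\mathcal{E}(E_f^M) \le S/M$, which is less than $\delta$ once $M > S/\delta$, and hence $\int_{E_f^M}\norma{f(t)}_X\,dt < \varepsilon$ uniformly in $f \in K$. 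Taking $W_\varepsilon$ to be the closed ball of radius $M$ in $X$ — which is weakly compact precisely because $X$ is reflexive, by the Kakutani/Eberlein--Šmulian theory recalled in Theorem \ref{zhou th 1.26} — we see that each $f \in K$ takes values outside $W_\varepsilon$ only on the set $E_f^M$ of small measure and small mass, which is exactly the hypothesis needed to invoke \cite[Corollary 2.6]{diestel1993weak}.

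Finally I would assemble the pieces: $K$ is uniformly integrable by hypothesis, and the argument above verifies the tightness-into-weakly-compact-sets condition, so \cite[Corollary 2.6]{diestel1993weak} applies and yields that $K$ is relatively weakly compact in $L^1(E,X)$. The main (and only real) obstacle is bookkeeping around the precise phrasing of the cited corollary: one must be careful that the version used really does have uniform integrability plus the weak-compactness-valued tightness as its hypotheses, and that reflexivity of $X$ is what makes the tightness condition free. If instead the cited result is already stated with reflexivity built in, the proof collapses to a one-line citation; in either case no genuine analytic difficulty remains beyond the elementary Chebyshev estimate above.
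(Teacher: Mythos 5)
The paper itself offers no argument for this theorem beyond the citation: it is presented as an immediate consequence of \cite[Corollary 2.6]{diestel1993weak}, so at the level of strategy you are doing the same thing, namely reducing to that corollary. The weak point of your plan is the bridging step, i.e.\ the claim that uniform integrability automatically supplies the ``weak-compactness-valued'' hypothesis of the cited result. That is only true for the particular phrasing you hypothesize (an $\varepsilon$-tightness condition into a weakly compact set $W_\varepsilon$). The formulations actually occurring in Diestel--Ruess--Schachermayer and in \"{U}lger's earlier work are different: besides the convex-blocks characterization, the corollaries assume either (i) that $X$ is reflexive, in which case your Chebyshev computation is unnecessary and the theorem is a verbatim citation, or (ii) that for almost every $e\in E$ the section $K(e)=\{f(e):f\in K\}$ is relatively weakly compact in $X$. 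In case (ii) your reduction fails: uniform integrability does not imply a.e.\ boundedness of the sections, even for $X=\mathbb{R}$. For instance the ``typewriter'' family $f_{k,j}=2^{k/2}\,\mathbbm{1}_{[j2^{-k},(j+1)2^{-k}]}$, $0\le j<2^k$, $k\in\mathbb{N}$, on $E=[0,1]$ is uniformly integrable, yet $\sup\{f(e):f\in K\}=+\infty$ for every $e$; Chebyshev controls the measure of $\{\norma{f}_X>M\}$ for each single $f$, not the pointwise behaviour of the whole family. (A smaller quibble: with the $\varepsilon$--$\delta$ definition of uniform integrability, finiteness of the measure alone does not force $\sup_{f\in K}\int_E\norma{f(t)}_X\,dt<\infty$ when $E$ has atoms; one should either use the definition $\lim_{c\to\infty}\sup_{f\in K}\int_{\{\norma{f}_X>c\}}\norma{f}_X\,dt=0$ or add boundedness explicitly.)

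Your truncation idea can nonetheless be upgraded to a complete proof that does not depend on guessing the exact wording of the corollary: set $f^M=f\,\mathbbm{1}_{\{\norma{f}_X\le M\}}$; the family $\{f^M:f\in K\}$ is bounded in $L^2(E,X)$, which is reflexive because $X$ is, hence it is relatively weakly compact there and therefore also in $L^1(E,X)$, since the inclusion $L^2(E,X)\hookrightarrow L^1(E,X)$ is weak--weak continuous; by uniform integrability $\sup_{f\in K}\norma{f-f^M}_{L^1(E,X)}\to 0$ as $M\to\infty$, and the standard approximation lemma (a set which, for every $\varepsilon>0$, lies in the $\varepsilon$-neighbourhood of a relatively weakly compact set is itself relatively weakly compact) concludes. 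So either quote the reflexive-space form of the corollary directly, or include this short self-contained argument; the intermediate route you propose --- feeding an $\varepsilon$-tightness condition into \cite[Corollary 2.6]{diestel1993weak} --- is the one step that does not match the cited statement.
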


\subsection{Theory of Semigroups}

\noindent In this section we introduce the class of semigroups of linear operators in Hilbert spaces, for more details we refer to \cite{vrabie2003} and \cite{engel2006short}.

\noindent Let $H$ be a Hilbert space and $\mathcal{L}(H)$ be the Banach space of linear bounded operators with the norm $\norma{\, \cdot \,}_{\mathcal{L}(H)}$.

\begin{defn}
A one parameter family $\{T(t)\}_{t \ge 0} \subset \mathcal{L}(X)$ is a \emph{semigroup} of bounded linear operators on $H$ if
\begin{itemize}
    \item[(i)] $T(t)T(s) = T(t+s)$, for $t,s \ge 0$; 
    \item[(ii)] $T(0)=I$; here $I$ denotes the identity operator in $X$.
\end{itemize}
\end{defn}

\begin{defn}
A semigroup of linear operator $\{T(t)\}_{t \ge 0}$ is called \emph{strongly continuous}, or a \emph{semigroup of class} $C_0$, or $C_0$-\emph{semigroup}, if the mapping $t \mapsto T(t)x$ is strongly continuous, for each $x \in H$ i.e.
\begin{equation*}
    \lim_{t \to 0} T(t)x = x \quad  \forall \, x \in H.
\end{equation*}
\end{defn}

\begin{defn}
Let $\{T(t)\}_{t \ge 0}$ be a $C_0$-semigroup defined on $H$. The linear operator $A$ is the \emph{infinitesimal generator of} $\{T(t)\}_{t \ge 0}$ defined by
\begin{equation*}
    Ax= \lim_{t \to 0} \frac{T(t)x -x}{t} \quad \text{for} \quad x \in D(A),
\end{equation*}
where $D(A)= \Big\{ x \in X : \lim_{t \to 0} \frac{T(t)x -x}{t} \,\, \text{exists in} \,\, H \Big\}$.    
\end{defn}

\begin{thm}
If $\{T(t)\}_{t \ge 0}$ is a $C_0$-semigroup, then there exists $M \ge 1$, and $w \in \mathbb{R}$ such that 
\begin{equation}
\label{eq vrabie 2.3.1}
    \norma{T(t)}_{\mathcal{L}(H)} \le M e^{tw}
\end{equation}
foe each $t \ge 0$. 
\end{thm}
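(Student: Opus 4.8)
The plan is to proceed in two steps, following the classical argument: first establish local boundedness of $t \mapsto \norma{T(t)}_{\mathcal{L}(H)}$ on a short interval $[0,\delta]$, and then propagate this bound to all of $[0,+\infty)$ using the semigroup law.

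\textit{Step 1 (local boundedness).} I would first show that there exist $\delta > 0$ and $M \ge 1$ with $\norma{T(t)}_{\mathcal{L}(H)} \le M$ for all $t \in [0,\delta]$. Suppose, for contradiction, that no such $\delta$ works; then one can pick a sequence $t_n \downarrow 0$ with $\norma{T(t_n)}_{\mathcal{L}(H)} \to +\infty$. By the uniform boundedness principle (Banach--Steinhaus), the family $\{T(t_n)\}_n$ cannot be pointwise bounded, so there is some $x \in H$ with $\sup_n \norma{T(t_n)x}_H = +\infty$. But strong continuity at $0$ gives $T(t_n)x \to x$ in $H$, so $(T(t_n)x)_n$ is convergent, hence bounded — a contradiction. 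Thus such a $\delta$ exists, and on $[0,\delta]$ the family $\{T(t)\}$ is pointwise bounded; applying Banach--Steinhaus once more produces a finite uniform bound $M := \sup_{t \in [0,\delta]} \norma{T(t)}_{\mathcal{L}(H)} < \infty$. Since $T(0)=I$ by axiom (ii), necessarily $M \ge 1$.

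\textit{Step 2 (propagation).} Fix $t \ge 0$ and write $t = n\delta + r$ with $n \in \mathbb{N}\cup\{0\}$ and $r \in [0,\delta)$. Using properties (i)--(ii), $T(t) = T(\delta)^n T(r)$, hence
\begin{equation*}
\norma{T(t)}_{\mathcal{L}(H)} \le \norma{T(\delta)}_{\mathcal{L}(H)}^{\,n}\, \norma{T(r)}_{\mathcal{L}(H)} \le M^n \cdot M = M^{\,n+1}.
\end{equation*}
Since $M \ge 1$ and $n \le t/\delta$, setting $w := \delta^{-1}\ln M \ge 0$ yields $M^{\,n+1} \le M\, M^{t/\delta} = M e^{wt}$, which is exactly the estimate \eqref{eq vrabie 2.3.1} (and $w \in \mathbb{R}$, as required).

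The only genuine obstacle is Step 1: passing from the pointwise hypothesis (strong continuity at $0$, which is all that is assumed) to a uniform operator-norm bound near $0$ is precisely where completeness of $H$ is used, through the uniform boundedness principle. Once local boundedness is secured, Step 2 is purely algebraic.
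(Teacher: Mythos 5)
Your proof is correct: the localization argument (Banach--Steinhaus together with strong continuity at $0$ to get a uniform bound $M$ on some $[0,\delta]$, then the semigroup law $T(n\delta + r)=T(\delta)^nT(r)$ and $w=\delta^{-1}\ln M$) is exactly the classical argument, which is what the cited references (Vrabie, Engel--Nagel) give; the paper itself states the theorem without proof. The only cosmetic remark is that your contradiction argument already yields the uniform bound on $[0,\delta]$ directly, so the second invocation of Banach--Steinhaus is redundant but harmless.
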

\noindent If $M=1$ and $w=0$, the semigroup $\{T(t)\}_{t \ge 0}$ is said semigroup of contractions.

\begin{defn}
\label{positive semigroup}
A $C_0$-semigroup on a Banach lattice $X$ is called positive if each operator $T(t)$ is positive, i.e.
\begin{equation}
x \geq 0 \quad \mbox{implies} \quad T(t)x\geq 0 \quad \mbox{for every} \quad x \in X \quad{and} \quad t \geq 0.
\end{equation}
\end{defn}
\noindent As usual, given a linear and closed operator $A : D(A) \subseteq H \to H$ we denote by $\rho(A)$ the set of \emph{regular values of the operator} $A$, i.e. the set of all $\lambda \in \mathbb{C}$ for which the range of $\lambda I - A$ is dense in $H$ and $( \lambda I - A)^{-1} : R( \lambda I - A) \to D(A)$ is continuous. The mapping $R(\, \cdot \,; A) : \rho(A) \to \mathcal{L}(H)$ defined by $J^\lambda = (\lambda I - A)^{-1}$ for each $\lambda \in \rho(A)$, is called the \emph{resolvent function of} $A$.
In the next proposition we collect some classical properties of the generator of a semigroup of contractions.
\begin{prop}
\label{propertiesJlambda}
Let $A : D(A) \subseteq X \to X$ be the infinitesimal generator of a $C_0$-semigroup $\{T(t)\}_{t \ge 0}$ of contractions, then 
\begin{itemize}
    \item[(i)] $D(A)$ is dense in $X$;
    \item[(ii)] $A$ is a closed operator;
    \item[(iii)] $(0, + \infty) \subseteq \rho(A)$;
    \item[(iv)] for each $\lambda > 0$ $\norma{J_\lambda}_{\mathcal{L}(X)} \le \displaystyle\frac{1}{\lambda}$;
    \item[(v)] $\displaystyle\lim_{\lambda \to \infty}$ $\lambda J_\lambda x = x$ for every $x \in H$;
    \item[(vi)] $A$ is a dissipative operator, i.e. $\langle Ax,x \rangle \leq 0$ for every $x \in H$.
\end{itemize}
\end{prop}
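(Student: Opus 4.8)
The plan is to establish the six properties in the classical order of the Hille--Yosida / Lumer--Phillips circle of ideas, relying throughout on the contraction estimate $\norma{T(t)}_{\mathcal{L}(X)}\le 1$, which follows from \eqref{eq vrabie 2.3.1} with $M=1$ and $w=0$ (a detailed treatment can be found in \cite{vrabie2003,engel2006short}). First I would prove (i) and (ii). For density, given $x\in X$ and $t>0$, set $x_t:=\frac1t\int_0^t T(s)x\,ds$; a direct computation with the semigroup law shows that $\frac{T(h)x_t-x_t}{h}\to\frac1t\big(T(t)x-x\big)$ as $h\to0^+$, so $x_t\in D(A)$, while strong continuity gives $x_t\to x$ as $t\to0^+$, proving (i). For closedness, if $x_n\to x$ and $Ax_n\to y$, I integrate the identity $T(t)x_n-x_n=\int_0^t T(s)Ax_n\,ds$ (valid for elements of $D(A)$), pass to the limit using $\norma{T(s)}_{\mathcal{L}(X)}\le1$ to obtain $T(t)x-x=\int_0^t T(s)y\,ds$, then divide by $t$ and let $t\to0^+$ to get $x\in D(A)$ and $Ax=y$.

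For (iii) and (iv) I would introduce, for $\lambda>0$, the operator $R_\lambda x:=\int_0^{+\infty}e^{-\lambda s}T(s)x\,ds$. The integral converges absolutely because $\norma{e^{-\lambda s}T(s)x}_X\le e^{-\lambda s}\norma{x}_X$, and the same estimate immediately gives $\norma{R_\lambda x}_X\le\frac1\lambda\norma{x}_X$. The crucial step is to identify $R_\lambda$ with $J_\lambda=(\lambda I-A)^{-1}$: computing $\frac{T(h)R_\lambda x-R_\lambda x}{h}$ via the change of variables $s\mapsto s-h$ shows that $R_\lambda x\in D(A)$ with $AR_\lambda x=\lambda R_\lambda x-x$, hence $(\lambda I-A)R_\lambda x=x$ for all $x\in X$; the same manipulation, combined with the closedness from (ii), yields $R_\lambda(\lambda I-A)x=x$ for $x\in D(A)$. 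Thus $\lambda I-A\colon D(A)\to X$ is a bijection with bounded inverse $R_\lambda=J_\lambda$, which proves (iii) and, via the bound above, (iv).

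It remains to treat (v) and (vi). Since $\lambda\int_0^{+\infty}e^{-\lambda s}\,ds=1$, I rewrite $\lambda J_\lambda x-x=\lambda\int_0^{+\infty}e^{-\lambda s}\big(T(s)x-x\big)\,ds$ and split the integral at a small $\delta>0$: on $[0,\delta]$ the integrand is uniformly small by strong continuity, while on $[\delta,+\infty)$ the remainder is bounded by $\big(\sup_{s\ge0}\norma{T(s)x-x}_X\big)e^{-\lambda\delta}$, which tends to $0$ as $\lambda\to+\infty$; this gives (v). For (vi), I fix $x\in D(A)$ and use the Cauchy--Schwarz inequality together with $\norma{T(t)x}_X\le\norma{x}_X$ to get $\langle T(t)x,x\rangle\le\norma{x}_X^2$, so the difference quotient $\frac1t\big(\langle T(t)x,x\rangle-\norma{x}_X^2\big)$ is nonpositive for every $t>0$; letting $t\to0^+$ yields $\langle Ax,x\rangle\le0$.

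The hard part will be the identification of the Laplace-transform operator $R_\lambda$ with the resolvent $J_\lambda$ in steps (iii)--(iv): this needs the change of variables in the defining integral and, for the identity $R_\lambda(\lambda I-A)x=x$ on $D(A)$, an appeal to the closedness established in (ii). Once this is settled, the norm bound (iv) is immediate and the remaining items (v)--(vi) reduce to routine limit computations.
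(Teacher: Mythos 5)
The paper does not prove this proposition at all: it is stated as a collection of classical facts about generators of contraction semigroups, with the reader referred to the standard references \cite{vrabie2003} and \cite{engel2006short}. Your argument is precisely the standard Hille--Yosida-type proof found there --- density via the averages $\frac1t\int_0^t T(s)x\,ds$, closedness via the integral identity, the Laplace-transform representation $J_\lambda x=\int_0^{+\infty}e^{-\lambda s}T(s)x\,ds$ giving (iii)--(iv), the splitting argument for (v), and Cauchy--Schwarz plus the contraction bound for (vi) --- and it is correct, including the key points you flag (the change of variables identifying $R_\lambda$ with the resolvent, and the use of closedness to get $R_\lambda(\lambda I-A)x=x$ on $D(A)$). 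The only cosmetic remark is that in (vi) the inequality $\langle Ax,x\rangle\le 0$ of course only makes sense for $x\in D(A)$, which is how you in fact prove it; the ``for every $x\in H$'' in the paper's statement is a slight imprecision of the statement itself, not of your proof.
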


\subsection{Trace Class and Hilbert-Schmidt Operators}
\label{trace class and H-S space}
Throughout this section, we follow \cite{fabbrigozzi}; $U$, $V$, $Z$ will denote real, separable Hilbert spaces, $\braket{\cdot, \cdot}_U$, $\braket{\cdot, \cdot}_V$, $\braket{\cdot, \cdot}_Z$, and $\|\cdot\|_U, \|\cdot\|_V, \|\cdot\|_Z$, will be, respectively, the inner products and the relative norms in $U$, $V$ and $Z$. We denote by $\mathcal{L}(U,Z)$ the set of all bounded, continuous and linear operators $T : U \to Z$.

\begin{defn}
\label{traceclass}
A linear operator $T \in \mathcal{L}(U,Z)$ is called \emph{trace class} if $T$ can be represented by the form 
\begin{equation*}
    T(z) = \sum_{k=1}^{+ \infty} b_k \braket{z, a_k}_U \quad \text{for any} \, z \in U,
\end{equation*}
where $a_k$ and $b_k$ are two sequences of elements, respectively, in $U$ and $Z$ such that $\sum_{k=1}^{+ \infty} \norma{a_k}_U \norma{b_k}_Z < + \infty$. We denote by $\mathcal{L}_1(U, Z)$ the set of all trace class operators from $U$ to $Z$. 
\end{defn}

\begin{prop}
$\mathcal{L}_1(U,Z)$ is a separable Banach space with respect to the norm
\begin{align*}
    & \norma{T}_{\mathcal{L}_1(U, Z)} \\
    &= \inf \Big\{\sum_{k=1}^{+ \infty} \norma{a_k}_U \norma{b_k}_Z : \{a_k\} \subset U, \{b_k\} \subset Z,\; T(z)= \sum_{k=1}^{+ \infty} b_k \braket{z, a_k}_U, \forall z \in U \Big\}.
\end{align*}
\end{prop}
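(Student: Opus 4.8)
The plan is to establish the three assertions in turn: that $\norma{\,\cdot\,}_{\mathcal{L}_1(U,Z)}$ is a genuine norm, that $\mathcal{L}_1(U,Z)$ is complete under it, and that it is separable. The one estimate underpinning everything is the elementary bound $\norma{T}_{\mathcal{L}(U,Z)} \le \norma{T}_{\mathcal{L}_1(U,Z)}$: for any admissible representation $T(z) = \sum_k b_k \braket{z, a_k}_U$ one has $\norma{T(z)}_Z \le \bigl(\sum_k \norma{a_k}_U \norma{b_k}_Z\bigr)\norma{z}_U$, and taking the infimum over representations yields the claim. This immediately gives positive definiteness (a vanishing trace norm forces $\norma{T(z)}_Z = 0$ for all $z$, hence $T = 0$), while absolute homogeneity follows by rescaling the sequence $\{b_k\}$, and the triangle inequality follows by interleaving two near-optimal representations of $T_1$ and $T_2$ into a single representation of $T_1 + T_2$ whose defining series is at most $\norma{T_1}_{\mathcal{L}_1(U,Z)} + \norma{T_2}_{\mathcal{L}_1(U,Z)} + \varepsilon$.

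For completeness, let $(T_n)_n$ be Cauchy in $\mathcal{L}_1(U,Z)$. By the estimate above it is Cauchy in $\mathcal{L}(U,Z)$, hence converges in operator norm to some $T \in \mathcal{L}(U,Z)$. Passing to a subsequence we may assume $\norma{T_{n_{k+1}} - T_{n_k}}_{\mathcal{L}_1(U,Z)} \le 2^{-k}$; for each $k$ fix a representation of $S_k := T_{n_{k+1}} - T_{n_k}$ whose defining series is at most $2^{1-k}$. Since $\sum_k \norma{S_k}_{\mathcal{L}(U,Z)} < \infty$, the series $\sum_k S_k$ converges in operator norm and $T = T_{n_1} + \sum_k S_k$. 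Concatenating a near-optimal representation of $T_{n_1}$ with the representations of all the $S_k$ produces a single family $\{(a,b)\}$ with $\sum \norma{a}_U \norma{b}_Z \le \norma{T_{n_1}}_{\mathcal{L}_1(U,Z)} + \varepsilon + \sum_k 2^{1-k} < \infty$; absolute convergence of $\sum b\braket{z,a}_U$ in $Z$ justifies the regrouping and identifies its sum with $Tz$, so $T \in \mathcal{L}_1(U,Z)$. Applying the same bookkeeping to $T - T_{n_K} = \sum_{k \ge K} S_k$ gives $\norma{T - T_{n_K}}_{\mathcal{L}_1(U,Z)} \le 2^{2-K} \to 0$, and combining this with the Cauchy property shows $T_n \to T$ in $\mathcal{L}_1(U,Z)$.

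For separability, first note that finite-rank operators are dense: truncating a representation of $T$ after $N$ terms produces $T_N$ with $\norma{T - T_N}_{\mathcal{L}_1(U,Z)} \le \sum_{k > N} \norma{a_k}_U \norma{b_k}_Z \to 0$. Now fix countable dense sets $D_U \subset U$ and $D_Z \subset Z$ and consider the countable family of operators $z \mapsto \sum_{k=1}^N d_k \braket{z, c_k}_U$ with $N \in \mathbb{N}$, $c_k \in D_U$, $d_k \in D_Z$. Using the rank-one estimate $\norma{b\braket{\cdot,a}_U - b'\braket{\cdot,a'}_U}_{\mathcal{L}_1(U,Z)} \le \norma{b}_Z \norma{a - a'}_U + \norma{b - b'}_Z \norma{a'}_U$, one approximates each rank-one summand of a given finite-rank operator by one with coefficients in $D_U$ and $D_Z$; hence this countable family is dense in the finite-rank operators, and therefore in $\mathcal{L}_1(U,Z)$.

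The step I expect to be the main obstacle is the completeness argument, and within it the point that a priori a limit (in any reasonable sense) of nuclear operators need not be nuclear, and that $\norma{\,\cdot\,}_{\mathcal{L}_1(U,Z)}$ is defined as an infimum rather than by an explicit series. The subsequence-and-telescoping device is precisely what converts the numerical control $\norma{T_{n_{k+1}} - T_{n_k}}_{\mathcal{L}_1(U,Z)} \le 2^{-k}$ into an honest admissible representation of the limit $T$ with a controlled defining series; once this is in place, the remaining verifications (norm axioms and separability) are routine.
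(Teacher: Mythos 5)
Your argument is correct, but note that the paper itself offers no proof to compare against: this proposition appears in the preliminaries, where Section 2.2 explicitly follows the monograph of Fabbri--Gozzi--\'Swi\k{e}ch, and the result is simply quoted as background. Judged on its own, your proof is the standard one and is complete: the key bound $\norma{T}_{\mathcal{L}(U,Z)} \le \norma{T}_{\mathcal{L}_1(U,Z)}$ settles positive definiteness, the interleaving of near-optimal representations gives the triangle inequality, the subsequence-and-telescoping device correctly upgrades a Cauchy sequence in the nuclear norm to an admissible representation of the operator-norm limit (the absolute convergence of the concatenated series justifies the regrouping and yields the tail estimate $2^{2-K}$), and separability follows from density of finite-rank truncations together with your rank-one perturbation estimate and the separability of $U$ and $Z$. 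The only remark worth adding is that in this Hilbert-space setting one could argue more structurally via the polar/singular-value decomposition (writing a trace-class $T$ as $\sum_k s_k \braket{\cdot,e_k}_U f_k$ with $\sum_k s_k<\infty$), which is how such texts usually proceed; your representation-theoretic argument is more elementary and in fact works verbatim for nuclear operators between separable Banach spaces, at the price of handling the infimum-defined norm by hand, which you do correctly.
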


\begin{prop}
Given $T \in \mathcal{L}_1(Z,Z)$ and an orthonormal basis $\{e_k\}_{k \in \mathbb N}$ of $Z$, the series
\begin{equation*}
    \sum_{k=1}^{+\infty} \braket{Te_k, e_k}_Z
\end{equation*}
absolutely converges and the sum does not depend on the choice of the basis $\{e_k\}$.
\end{prop}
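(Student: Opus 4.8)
The plan is to exploit the structural representation of trace class operators given in Definition \ref{traceclass} together with Parseval's identity. Write $T(z)=\sum_{k=1}^{+\infty} b_k\braket{z,a_k}_Z$ for sequences $\{a_k\},\{b_k\}\subset Z$ with $\sum_{k=1}^{+\infty}\norma{a_k}_Z\norma{b_k}_Z<+\infty$, and fix an orthonormal basis $\{e_j\}_{j\in\mathbb N}$ of $Z$. Using the continuity of the inner product and the absolute convergence of $\sum_k b_k\braket{e_j,a_k}_Z$ (guaranteed by $\sum_k\norma{b_k}_Z\norma{a_k}_Z<+\infty$), one gets $\braket{Te_j,e_j}_Z=\sum_{k=1}^{+\infty}\braket{b_k,e_j}_Z\braket{a_k,e_j}_Z$ for every $j$.

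For the absolute convergence I would estimate $\sum_{j}\abs{\braket{Te_j,e_j}_Z}\le\sum_{j}\sum_{k}\abs{\braket{b_k,e_j}_Z}\,\abs{\braket{a_k,e_j}_Z}$; since all the terms are nonnegative, the order of the two summations may be exchanged (Tonelli's theorem for counting measure), and then for each fixed $k$ the Cauchy--Schwarz inequality together with Parseval's identity gives $\sum_{j}\abs{\braket{b_k,e_j}_Z}\,\abs{\braket{a_k,e_j}_Z}\le\norma{b_k}_Z\norma{a_k}_Z$. Summing over $k$ produces the bound $\sum_{k}\norma{a_k}_Z\norma{b_k}_Z<+\infty$, which establishes absolute (hence unconditional) convergence of $\sum_j\braket{Te_j,e_j}_Z$.

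For the independence of the basis, the estimate just obtained shows that the double series $\sum_{j}\sum_{k}\braket{b_k,e_j}_Z\braket{a_k,e_j}_Z$ converges absolutely, so Fubini's theorem for series permits the interchange of the order of summation; the inner sum over $j$ then equals $\sum_{j}\braket{b_k,e_j}_Z\braket{a_k,e_j}_Z=\braket{b_k,a_k}_Z$ by the polarized form of Parseval's identity. Consequently $\sum_{j}\braket{Te_j,e_j}_Z=\sum_{k}\braket{b_k,a_k}_Z$, an expression that no longer involves the basis $\{e_j\}$, which is precisely the asserted basis-independence.

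There is no deep obstacle here: the only point demanding genuine care is the justification of exchanging the order of summation in the double series, which is handled by Tonelli's theorem (nonnegative terms) in the convergence part and by Fubini's theorem (once absolute convergence is in hand) in the basis-independence part; everything else is a direct application of Cauchy--Schwarz and Parseval.
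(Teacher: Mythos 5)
Your proof is correct: expanding $\braket{Te_j,e_j}_Z$ via the nuclear representation, bounding with Cauchy--Schwarz and Parseval to get the majorant $\sum_k\norma{a_k}_Z\norma{b_k}_Z$, and then using Fubini to identify the sum with $\sum_k\braket{b_k,a_k}_Z$, which is manifestly basis-free, is exactly the standard argument. Note that the paper itself states this proposition without proof (it is quoted from the cited monograph of Fabbri--Gozzi--\'Swi\k{e}ch), so there is no in-paper argument to compare against; your write-up supplies precisely the canonical justification, with the only delicate point (interchanging the two summations) properly handled by Tonelli and Fubini.
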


\begin{defn}
Given $T \in \mathcal{L}_1(Z,Z)$ and an orthonormal basis $\{e_k\}_{k \in \mathbb N}$ of $Z$,
\begin{equation*}
    Tr(T) =  \sum_{k=1}^{+\infty} \braket{Te_k, e_k}_Z
\end{equation*}
is called the \emph{trace} of $T$.
\end{defn}

\noindent We have $\abs{Tr(T)} \le \norma{T}_{\mathcal{L}_1(Z,Z)}$.

\begin{defn}
Let $\{e_k\}_{k \in \mathbb N}$ be an orthonormal basis of $U$. The space of \emph{Hilbert–Schmidt operators} $\mathcal{L}_2(U,Z)$ from $U$ to $Z$ is defined by 
\begin{equation}\label{def:HS_operators}
    \mathcal{L}_2(U,Z) = \left\{ T \in \mathcal{L}(U,Z) : \sum_{k=1}^{+\infty} \norma{Te_k}_Z^2 < + \infty \right\}.
\end{equation}
\end{defn}

\begin{prop}
Let $\{e_k\}_{k \in \mathbb N}$ be an orthonormal basis of $U$. The space of Hilbert–Schmidt operators $\mathcal{L}_2(U,Z)$ does not depend on the choice of the orthonormal basis $\{e_k\}$. It is a separable Hilbert space if endowed with the inner product
\begin{equation*}
    \braket{S,T}_2 = \sum_{k=1}^{+\infty} \braket{Se_k, Te_k}_Z, \quad S, T \in \mathcal{L}_2(U,Z)
\end{equation*}
and the norm
\begin{equation*}
    \|T\|_{\mathcal{L}_2(U,Z)} = \sum_{k=1}^{+\infty} \|Te_k\|_Z, \quad T \in \mathcal{L}_2(U,Z).
\end{equation*}
The inner product and the norm are independent of the choice of the basis.
\end{prop}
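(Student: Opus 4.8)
The plan is to verify in turn the four assertions — basis independence of the space $\mathcal{L}_2(U,Z)$, the inner‑product structure, completeness, and separability — using only Parseval's identity, the Cauchy–Schwarz inequality, and the existence of the adjoint of a bounded operator. (First a harmless correction: the norm in the statement should read $\|T\|_{\mathcal{L}_2(U,Z)}=\big(\sum_{k=1}^{+\infty}\|Te_k\|_Z^2\big)^{1/2}$, the one induced by $\braket{\cdot,\cdot}_2$, and everything below is stated for this norm.) For basis independence I would fix in addition an orthonormal basis $\{g_\ell\}_\ell$ of $Z$, introduce $T^\ast\in\mathcal{L}(Z,U)$, and compute, by Parseval in $Z$ and then in $U$ together with Tonelli's theorem (all summands being nonnegative, the order of summation is free),
\begin{equation*}
\sum_k\|Te_k\|_Z^2=\sum_k\sum_\ell|\braket{Te_k,g_\ell}_Z|^2=\sum_\ell\sum_k|\braket{e_k,T^\ast g_\ell}_U|^2=\sum_\ell\|T^\ast g_\ell\|_U^2,
\end{equation*}
whose right‑hand side is independent of $\{e_k\}$. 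This shows at once that the condition defining $\mathcal{L}_2(U,Z)$ in \eqref{def:HS_operators} does not depend on the chosen orthonormal basis of $U$ and that $T\mapsto\|T\|_{\mathcal{L}_2(U,Z)}$ is well defined.

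Next I would check that $\braket{\cdot,\cdot}_2$ is a genuine inner product. Absolute convergence of $\sum_k\braket{Se_k,Te_k}_Z$ follows from Cauchy–Schwarz applied first in $Z$ and then in $\ell^2$, yielding the bound $\|S\|_{\mathcal{L}_2(U,Z)}\|T\|_{\mathcal{L}_2(U,Z)}$; bilinearity and symmetry are immediate; and $\braket{T,T}_2=\sum_k\|Te_k\|_Z^2\ge 0$, with equality forcing $Te_k=0$ for all $k$, hence $T=0$ since $\mathrm{span}\{e_k\}$ is dense and $T$ continuous. Independence of $\braket{\cdot,\cdot}_2$ of the basis then follows either from the polarization identity and the previous step, or directly by the same interchange of sums (now legitimate by the absolute convergence just established), which identifies $\braket{S,T}_2$ with $\sum_\ell\braket{S^\ast g_\ell,T^\ast g_\ell}_U$.

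For completeness I would first observe that $\|T\|_{\mathcal{L}(U,Z)}\le\|T\|_{\mathcal{L}_2(U,Z)}$: expanding $x=\sum_k c_ke_k$ gives $\|Tx\|_Z\le\sum_k|c_k|\,\|Te_k\|_Z\le\|x\|_U\|T\|_{\mathcal{L}_2(U,Z)}$. Hence a $\|\cdot\|_{\mathcal{L}_2(U,Z)}$‑Cauchy sequence $\{T_n\}$ is Cauchy in the Banach space $\mathcal{L}(U,Z)$ and converges there in operator norm to some $T$, so $T_ne_k\to Te_k$ in $Z$ for each $k$. Then, given $\varepsilon>0$ and $N$ with $\|T_n-T_m\|_{\mathcal{L}_2(U,Z)}\le\varepsilon$ for $n,m\ge N$, I would truncate — $\sum_{k\le J}\|(T_n-T_m)e_k\|_Z^2\le\varepsilon^2$ — let $m\to\infty$ and then $J\to\infty$, obtaining $\|T_n-T\|_{\mathcal{L}_2(U,Z)}\le\varepsilon$ for $n\ge N$; in particular $T_n-T\in\mathcal{L}_2(U,Z)$, hence $T\in\mathcal{L}_2(U,Z)$, and $T_n\to T$ there.

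Finally, for separability I would exhibit a countable orthonormal basis: with $\{e_k\}$ and $\{g_\ell\}$ as above, set $R_{k\ell}x=\braket{x,e_k}_U\,g_\ell$. A one‑line computation gives $\braket{R_{k\ell},R_{k'\ell'}}_2=\delta_{kk'}\delta_{\ell\ell'}$ and $\braket{T,R_{k\ell}}_2=\braket{Te_k,g_\ell}_Z$, so $\{R_{k\ell}\}$ is orthonormal and any $T$ orthogonal to all of them satisfies $Te_k=0$ for every $k$, i.e. $T=0$; thus $\{R_{k\ell}\}_{k,\ell}$ is a countable orthonormal basis and $\mathcal{L}_2(U,Z)$ is a separable Hilbert space. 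I do not foresee a serious obstacle; the only points demanding care are the interchanges of summation in the first two steps (covered by Tonelli for the nonnegative double series and by the double Cauchy–Schwarz estimate for the inner product) and the order of the two limits in the completeness argument, where one must pass to a finite truncation of the series before using the merely pointwise operator‑norm convergence of $T_n$.
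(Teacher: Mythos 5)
Your proposal is correct; note, however, that the paper does not prove this proposition at all --- it is quoted as a standard preliminary from the monograph of Fabbri, Gozzi and \'Swi\k{e}ch, so there is no in-paper argument to compare against. Your write-up is the standard textbook proof (basis independence via the adjoint and Tonelli, the double Cauchy--Schwarz bound for $\braket{\cdot,\cdot}_2$, completeness via $\|T\|_{\mathcal{L}(U,Z)}\le\|T\|_{\mathcal{L}_2(U,Z)}$ and truncation, separability from the rank-one orthonormal system $R_{k\ell}$), and all the delicate points --- the interchanges of summation and the order of limits in the completeness step --- are handled correctly. Your correction of the displayed norm to $\bigl(\sum_{k}\|Te_k\|_Z^2\bigr)^{1/2}$ is also right: as printed in the paper the formula is a typo, since $\sum_k\|Te_k\|_Z$ is not the norm induced by the stated inner product and can be infinite for a Hilbert--Schmidt operator.
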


\noindent In the sequel we need the following convergence result, see  \cite[Theorem 9.1.14]{hytonen2018analysis}.
\begin{thm}
\label{convergenceHilbertSchmidt}
Let $T \in \mathcal{L}_2(U,Z)$ and $\{L_n\}$ be a sequence of operators in $\mathcal{L}(Z,Z)$, such that
\begin{equation}
    L_n(x) \to L(x) \quad \mbox{for all} \quad x \in Z.
\end{equation}
\end{thm}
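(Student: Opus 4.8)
The statement as displayed does not include its conclusion; in the form actually needed in the sequel (and as given in the cited reference) it asserts that, writing $L \in \mathcal{L}(Z,Z)$ for the pointwise limit of the operators $L_n$, one has $L_n T \to L T$ in the Hilbert--Schmidt norm of $\mathcal{L}_2(U,Z)$. To prove this I would first observe that, since $(L_n x)_n$ converges in $Z$ for each fixed $x \in Z$, each such sequence is bounded, so the uniform boundedness principle yields $M := \sup_n \norma{L_n}_{\mathcal{L}(Z,Z)} < +\infty$; letting $n \to \infty$ in $\norma{L_n x}_Z \le M \norma{x}_Z$ then shows that $L$ is linear and bounded with $\norma{L}_{\mathcal{L}(Z,Z)} \le M$, and in particular $L_n T, LT \in \mathcal{L}_2(U,Z)$ by the ideal property of Hilbert--Schmidt operators.

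Next I would fix an orthonormal basis $\{e_k\}_{k\in\mathbb N}$ of $U$ and write, using the basis-independent definition of the Hilbert--Schmidt norm recalled above,
\begin{equation*}
\norma{L_n T - L T}_{\mathcal{L}_2(U,Z)}^2 = \sum_{k=1}^{+\infty} \norma{(L_n - L)(T e_k)}_Z^2 .
\end{equation*}
For each fixed $k$ the vector $T e_k \in Z$ does not depend on $n$, so $\norma{(L_n - L)(T e_k)}_Z \to 0$ as $n \to \infty$ by the hypothesis on $(L_n)$; that is, every term of the series tends to $0$.

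It then remains to interchange this limit with the infinite sum, and this is the only genuinely delicate point. I would justify it by dominated convergence on $\mathbb N$ equipped with the counting measure: for all $n$ and $k$,
\begin{equation*}
\norma{(L_n - L)(T e_k)}_Z^2 \le \big(\norma{L_n}_{\mathcal{L}(Z,Z)} + \norma{L}_{\mathcal{L}(Z,Z)}\big)^2 \norma{T e_k}_Z^2 \le 4 M^2 \norma{T e_k}_Z^2 ,
\end{equation*}
while $\sum_{k=1}^{+\infty} \norma{T e_k}_Z^2 = \norma{T}_{\mathcal{L}_2(U,Z)}^2 < +\infty$ precisely because $T \in \mathcal{L}_2(U,Z)$. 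Thus $k \mapsto 4 M^2 \norma{T e_k}_Z^2$ is a summable majorant independent of $n$, the interchange is legitimate, and we conclude $\norma{L_n T - L T}_{\mathcal{L}_2(U,Z)} \to 0$. The whole argument hinges on producing that $n$-uniform summable bound, which is exactly why the Banach--Steinhaus estimate of the first step is indispensable; the remaining steps are routine.
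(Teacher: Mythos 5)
Your proposal is correct, and there is nothing in the paper to compare it against: Theorem \ref{convergenceHilbertSchmidt} is quoted from the literature (Theorem 9.1.14 of Hyt\"onen--van Neerven--Veraar--Weis) and no proof is given in the paper; you also correctly diagnosed that the conclusion $L_nT \to LT$ in $\mathcal{L}_2(U,Z)$ belongs inside the statement but was displaced after it in the source. Your argument --- Banach--Steinhaus to get a uniform bound $M$ on $\norma{L_n}_{\mathcal{L}(Z,Z)}$, hence boundedness of the pointwise limit $L$, followed by termwise convergence of $\sum_k \norma{(L_n-L)Te_k}_Z^2$ with the $n$-independent summable majorant $4M^2\norma{Te_k}_Z^2$ --- is the standard proof of this fact and is complete as written.
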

\noindent Then
\begin{equation}
L_n T \to L T \quad \mbox{in} \quad \mathcal{L}_2(U,Z).   
\end{equation}
Let $Q$ be a symmetric nonnegative definite trace-class operator on $U$ with non zero eigenvalues, i.e. $Q$ satisfies Definition \ref{traceclass} and 
$$
\langle Q u, v\rangle_U \geq 0 \quad \langle Q u, v\rangle_U = \langle u, Q v\rangle_U \quad \forall \; u,v \in U.
$$
The space $Q^{\frac{1}{2}} U$ is a separable Hilbert space equipped with the scalar product 
\begin{equation}
\braket{u, v}_{Q^{\frac{1}{2}} U} = \sum_{k=1}^{+\infty} \displaystyle\frac{1}{\lambda_k} \braket{u,e_k}_U \braket{v,e_k}_U,
\end{equation}
where $\{e_k\}_{k \in \mathbb{N}}$ is an orthonormal basis of $U$ and $\{\lambda_k\}_{k \in \mathbb{N}}$ is the sequence of eigenvalues of $Q$. 

\noindent We denote by $\mathcal{L}^0_2=\mathcal L_2(Q^{\frac{1}{2}} U,Z)$ the space of Hilbert Schimidt operators from $Q^{\frac{1}{2}}U$ to $Z$.

\subsection{Multivalued Mappings}

In this subsection, following 
\cite{koz}, we introduce some basic facts about multivalued analysis. Let $X$ and $Y$ be locally convex topological spaces.

\begin{defn}
A multivalued map (multimap for short) $F: X \to P(Y)$ is \emph{upper semicontinuous at the point} $x \in X$ if, for every open set $W \subseteq Y$ such that $F(x) \subset W$, there exists a neighborhood $V(x)$ of $x$ with the property that $F(V(x)) \subset W$. A multimap is \emph{upper semicontinuous} (shortly u.s.c.) if it is upper semicontinuous at every point $x \in X$.
\end{defn}

\begin{defn}
A multimap $F: X \to P(Y)$ is \emph{lower semicontinuous} at the point $x \in X$ if, for every open set $W \subseteq Y$ such that $F(x) \cap W \neq \emptyset$, there exists a neighborhood $V(x)$ of $x$ with the property that $F(x^\prime) \cap W \neq \emptyset$ for all $x^\prime \in V(x)$. A multimap is \emph{lower semicontinuous} (shortly l.s.c.) if it is lower semicontinuous at every point $x \in X$.
\end{defn}


\begin{defn}
A multimap $F: X \to P(Y)$ is said to be \emph{closed} if its graph $\Gamma_F$ is a closed subset of the space $X \times Y$.
\end{defn}
\begin{defn}
A multimap $F: X \to P(Y)$ is
\begin{itemize}
    \item[(a)] \emph{compact} if its range $F(x)$ is relatively compact in $Y$, i.e. $\overline{F(X)}$ is compact in $Y$;
    \item[(b)] \emph{locally compact} if every point $x \in X$ has a neighborhood $V(x)$ such that the restriction of $F$ to $V(x)$ is compact;
    \item[(c)] \emph{quasi compact} if its restriction to any compact subset $A \subset X$ is compact.
\end{itemize}
\end{defn}
\begin{thm}
    Let $F:X \to P(Y)$ be a closed locally compact multimap with compact values. Then, $F$ is u.s.c..
    \end{thm}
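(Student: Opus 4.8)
I would verify upper semicontinuity at an arbitrary fixed point $x_0 \in X$. So fix an open set $W \subseteq Y$ with $F(x_0) \subseteq W$; the goal is to produce a neighborhood $V$ of $x_0$ with $F(V) \subseteq W$. The first step invokes local compactness: choose a neighborhood $V_0$ of $x_0$ on which the restriction $F|_{V_0}$ is compact, i.e. $K := \overline{F(V_0)}$ is compact in $Y$. Then $C := K \setminus W$ is a closed subset of the compact set $K$, hence itself compact; note $C \cap F(x_0) = \emptyset$, since $F(x_0) \subseteq W$.

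The heart of the argument is a separation step based on the closedness of the graph $\Gamma_F \subseteq X \times Y$. For each $y \in C$ one has $(x_0,y) \notin \Gamma_F$ (because $y \notin F(x_0)$), and closedness of $\Gamma_F$ supplies open neighborhoods $U_y$ of $x_0$ and $O_y$ of $y$ with $(U_y \times O_y) \cap \Gamma_F = \emptyset$. The family $\{O_y : y \in C\}$ is an open cover of the compact set $C$; I would extract a finite subcover $O_{y_1}, \dots, O_{y_n}$ and set $V := V_0 \cap \bigcap_{i=1}^n U_{y_i}$, which is a neighborhood of $x_0$.

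The last step checks $F(V) \subseteq W$: given $x \in V$ and $y \in F(x)$, from $x \in V_0$ we get $y \in F(V_0) \subseteq K$; if $y \notin W$, then $y \in C$, so $y \in O_{y_i}$ for some $i$ while $x \in U_{y_i}$, whence $(x,y) \in (U_{y_i}\times O_{y_i})\cap\Gamma_F$, contradicting disjointness. Hence $y \in W$, so $F(V) \subseteq W$, and since $x_0$ was arbitrary, $F$ is u.s.c.

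\textbf{Main obstacle.} The delicate point is that in a general (non-metrizable) locally convex space one cannot reason with ordinary sequences — a naive ``take $x_n \to x_0$, $y_n \in F(x_n)\setminus W$, pass to a convergent subsequence'' proof would be incomplete, and one would otherwise be forced to work with nets. The plan above sidesteps this entirely by concentrating all the topology into a single use of compactness of $C$, passing from an infinite family of graph-separating boxes to a finite one; this is exactly where both the local compactness hypothesis and the closed-graph hypothesis are used. (The compact-values assumption is not actually needed for this implication, but it is harmless to retain.)
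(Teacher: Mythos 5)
Your proof is correct: the paper states this theorem without proof (it is recalled from the cited monograph of Kamenskii–Obukhovskii–Zecca), and your argument — separating the compact set $K\setminus W$ from the point $x_0$ by finitely many graph-avoiding boxes $U_{y_i}\times O_{y_i}$ and intersecting with the neighborhood furnished by local compactness — is exactly the standard proof of that classical result. Your closing remark is also accurate: compact values are not needed as a hypothesis here (indeed they follow, since closedness of the graph makes each $F(x)$ a closed subset of the compact $\overline{F(V_0)}$).
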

\noindent We also recall the classical notion of convexity for multivalued maps, see \cite{nikodem1987midpoint}.
    \begin{defn}
        A multimap $F: X \to P(Y)$ is said to be convex, if for every finite sequences $\{x_i\}_{i=1}^n \subset X$ and $\{\lambda_i\}_{i=1}^n$, such that $\lambda_i \geq 0$ for every $i=1,\dots,n$ and $\sum_{i=1}^n \lambda_i=1$ it follows that
        \begin{equation}
            \sum_{i=1}^n \lambda _i F(x_i) \subset F\left(\sum_{i=1}^n \lambda _i x_i \right).
        \end{equation}
    \end{defn}
\noindent Let $(E,\mathcal{E})$ be a complete and $\sigma$-finite measurable space and $X$ a Banach space with $X^*$ its topological dual.

\begin{defn}
A multimap $F: E \to P(X)$ is said to be \emph{measurable} if for every open subset $W \subset X$ the set $F^{-1}_-(W)$ is measurable.
\end{defn}

\begin{defn}
A map $f: E \to X$ is said to be a \emph{measurable selection} of a multimap $F: E \multimap X$, if $f$ is measurable and 
\begin{equation*}
    f(e) \in F(e) \,\, \text{for a.e.} \,\, e \in E.
\end{equation*}
\end{defn}

\begin{defn}
A map $f: E \to X$ is said to be 
\begin{itemize}
    \item[(a)] \emph{strongly measurable} if there exists a sequence of step functions convergent to $f$ a.e. on $E$;
    \item[(b)] \emph{scalarly measurable} if for every $x^* \in X^*$, $x^*(f):E \to \mathbb{R}$ is measurable.
\end{itemize}
\end{defn}
\noindent It is well known that a measurable map is also scalarly measurable.

\noindent In the next section, in order to prove the existence of at least one measurable selection we need the following two classical theorems on measurable maps taking values on a separable Banach space $X$.

\begin{thm}[Kuratowski-Ryll Nardzewski, \cite{Kuratowski-1965}]
\label{Kuratowsky}
Let $F:E \to P(X)$ be a measurable multimap with closed values. Then, $F$ admits a $(\mathcal{E},\mathcal{B}(X))$ measurable selector $f$, where $\mathcal{B}(X)$ denotes the Borel $\sigma$-algebra of $X$.
\end{thm}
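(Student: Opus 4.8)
The plan is to obtain $f$ as the uniform limit of a sequence of countably‑valued (hence measurable) approximate selectors, using the separability of $X$ to construct them and the closedness of the values $F(e)$ to pass to the limit; this is the classical argument. Throughout I write $d(x,S) = \inf_{s \in S}\norma{x-s}_X$ for $x \in X$ and $\emptyset \neq S \subseteq X$.

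First I would fix a countable dense set $\{x_i\}_{i \in \mathbb{N}} \subseteq X$ and, for every $n,i \in \mathbb{N}$, introduce
\[
E_{n,i} \;=\; F^{-1}_-\big(B(x_i,2^{-n})\big) \;=\; \big\{\, e \in E : d\big(x_i,F(e)\big) < 2^{-n} \,\big\},
\]
where $B(x_i,r)$ is the open ball of radius $r$. Since $F$ is measurable and $B(x_i,2^{-n})$ is open, each $E_{n,i} \in \mathcal{E}$; and since $F(e) \neq \emptyset$ for every $e$ and $\{x_i\}$ is dense, $\bigcup_i E_{n,i} = E$ for each fixed $n$. Disjointifying the sets $E_{n,i}$ (using the natural order on the index set) yields, for each $n$, a countable measurable partition of $E$.

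The heart of the argument is an induction producing measurable maps $f_n : E \to \{x_i\}_i$ with
\[
d\big(f_n(e),F(e)\big) < 2^{-n} \qquad\text{and}\qquad \norma{f_n(e)-f_{n-1}(e)}_X < 3\cdot 2^{-n} \qquad (e \in E,\; n \geq 1).
\]
For $n=0$ I let $f_0$ equal $x_i$ on the $i$‑th block of the disjointified partition associated with $n=0$, which already gives $d(f_0(e),F(e))<1$. For the step $n-1 \to n$, on each measurable set $A_j := \{e : f_{n-1}(e)=x_j\}$ and each $e \in A_j$ one has $d(x_j,F(e)) < 2^{-n+1}$, so there is $y \in F(e)$ with $\norma{x_j-y}_X < 2^{-n+1}$ and then, by density, some $x_i$ with $\norma{x_i-y}_X < 2^{-n}$; such an $x_i$ satisfies $e \in E_{n,i}$ and $\norma{x_i-x_j}_X < 2^{-n}+2^{-n+1}=3\cdot 2^{-n}$. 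Thus $A_j$ is covered by the measurable sets $A_j \cap E_{n,i}$ over the indices $i$ with $\norma{x_i-x_j}_X < 3\cdot 2^{-n}$; disjointifying this countable cover and setting $f_n \equiv x_i$ on the $i$‑th block defines a measurable map with both displayed properties.

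Finally, the second inequality makes $(f_n(e))_n$ uniformly Cauchy, so by completeness of $X$ it converges to some $f(e)$, and $f$ is measurable as a pointwise limit of measurable maps into the separable metric space $X$; in particular $f$ is $(\mathcal{E},\mathcal{B}(X))$‑measurable. From $d(f_n(e),F(e)) < 2^{-n} \to 0$ and $f_n(e) \to f(e)$ we obtain $d(f(e),F(e))=0$, and closedness of $F(e)$ forces $f(e) \in F(e)$ for every $e$. I expect the inductive refinement step to be the main obstacle: one must simultaneously keep $f_n$ within $2^{-n}$ of $F$ and within a summable distance of $f_{n-1}$ while preserving measurability, and the bookkeeping — restricting to nearby centers $x_i$ on each piece $A_j$ and disjointifying the sets $A_j \cap E_{n,i}$ — is precisely what reconciles these competing requirements.
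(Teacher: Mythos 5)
Your argument is correct: it is the classical successive-approximation proof of the Kuratowski--Ryll-Nardzewski selection theorem (countable dense set, measurability of the sets $F^{-1}_-(B(x_i,2^{-n}))$, countably valued approximate selectors with summable increments, uniform limit landing in the closed set $F(e)$), and all the ingredients you use (separability and completeness of $X$, nonemptiness and closedness of the values, measurability in the sense of preimages of open sets) are exactly those available in the paper's setting. The paper itself states this theorem without proof, citing the original reference, and your proof coincides with the standard argument given there, so there is nothing to reconcile.
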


\begin{thm}[Pettis, \cite
{Diestel-1977}
]
\label{Pettis}
A map $x: E \to X$ is strongly measurable if and only if it is scalarly measurable.
\end{thm}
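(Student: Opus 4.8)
The plan is to prove the two implications separately, with separability of $X$ entering only in the harder converse direction.

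\emph{From strong to scalar measurability.} This is immediate and uses no structural hypothesis on $X$: if $x$ is strongly measurable there is a sequence of step functions $x_n \to x$ a.e., and for each $x^* \in X^*$ the scalar functions $x^* \circ x_n$ are simple, hence measurable, and converge a.e.\ to $x^* \circ x$; so $x^* \circ x$ is measurable.

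\emph{From scalar to strong measurability.} Assume $x$ is scalarly measurable and $X$ separable; I would proceed in three steps. First, fix a countable dense set $\{y_j\} \subset X$ and, via Hahn--Banach, norm-one functionals $x_j^*$ with $x_j^*(y_j) = \norma{y_j}_X$; a short approximation argument then gives $\norma{y}_X = \sup_j x_j^*(y)$ for every $y \in X$. Second, deduce that for every fixed $a \in X$ the map $e \mapsto \norma{x(e)-a}_X = \sup_j \big( x_j^*(x(e)) - x_j^*(a) \big)$ is measurable, being a countable supremum of measurable scalar functions (here scalar measurability of $x$ is used). Third, for $m,n \in \mathbb N$ form the measurable sets $E_n^m = \{ e : \norma{x(e)-y_n}_X < 1/m \}$, which cover $E$ by density; disjointify them into measurable sets $F_n^m$ and set $\xi_m = \sum_n y_n \mathbf 1_{F_n^m}$. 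Then $\xi_m$ is a countably-valued measurable map with $\norma{\xi_m(e)-x(e)}_X < 1/m$ for every $e$, so $\xi_m \to x$ uniformly on $E$; truncating the countable range of each $\xi_m$ produces genuine step functions, and a diagonal extraction yields a sequence of step functions converging to $x$ pointwise on $E$. Hence $x$ is strongly measurable.

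\emph{Expected main difficulty.} The crux is the passage from scalar measurability of $x$ to measurability of the real-valued distance functions $e \mapsto \norma{x(e)-a}_X$, i.e.\ the first two steps above. This is precisely where separability of $X$ is indispensable: it provides a \emph{countable} norming family, so that the norm can be written as a countable --- hence measurability-preserving --- supremum, whereas the supremum over the whole dual unit ball need not be measurable. Everything after that (the covering and disjointification, the truncation and diagonalization) is routine bookkeeping.
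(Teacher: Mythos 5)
Your argument is correct and is essentially the classical Pettis proof from the cited reference (Diestel--Uhl), which the paper invokes without reproving: a countable norming family from separability makes the distance functions $e\mapsto\norma{x(e)-a}_X$ measurable, and the covering/disjointification produces countably-valued uniform approximants that are then truncated. The only slight imprecision is that after truncation the step functions converge to $x$ only almost everywhere (using the $\sigma$-finiteness of $(E,\mathcal{E})$ to control the sets where the truncation differs), not pointwise on all of $E$, which is exactly what the paper's definition of strong measurability requires.
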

\noindent Moreover, the following relation between measurability and strong measurability holds, see \cite[Proposition 1.10, Chapter 2]{hu-papageorgiou}.
\begin{thm}
\label{mis}
A map $x: E \to X$ is strongly measurable if and only if it is measurable.
\end{thm}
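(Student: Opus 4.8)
The statement being an equivalence, I would prove the two implications separately, by rather different means.

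For the implication ``$x$ measurable $\Rightarrow$ $x$ strongly measurable'' the plan is simply to chain together facts already available: a measurable map is scalarly measurable (as recalled just above), and since $X$ is a separable Banach space, Pettis' Theorem~\ref{Pettis} upgrades scalar measurability to strong measurability. So this direction costs nothing beyond an appeal to Theorem~\ref{Pettis}.

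The substance of the statement is the converse, ``$x$ strongly measurable $\Rightarrow$ $x$ measurable''. I would start from a sequence $(x_n)$ of step functions with $x_n \to x$ almost everywhere on $E$, furnished by the definition of strong measurability. Each $x_n$ has the form $\sum_i c_i \mathbbm{1}_{A_i}$ with $c_i \in X$ and $A_i \in \mathcal{E}$, so $x_n^{-1}(W) = \bigcup_{i : c_i \in W} A_i \in \mathcal{E}$ for every open $W \subseteq X$; hence each $x_n$ is measurable. The remaining task is to push measurability through the almost-everywhere limit. Fix a closed set $C \subseteq X$. The real-valued functions $e \mapsto d(x_n(e), C)$ are measurable, being compositions of the measurable $x_n$ with the continuous map $y \mapsto d(y, C)$; off the null set $N$ where $x_n \not\to x$, they converge pointwise to $e \mapsto d(x(e), C)$, which is therefore measurable on $E \setminus N$ as a pointwise limit of real measurable functions. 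Since $x(e) \in C$ iff $d(x(e),C) = 0$, the set $x^{-1}(C) \cap (E \setminus N)$ is measurable, while $x^{-1}(C) \cap N$ is a subset of the null set $N$; invoking completeness of $(E,\mathcal{E})$, we get $x^{-1}(C) \in \mathcal{E}$. Because the closed subsets of $X$ generate $\mathcal{B}(X)$, this shows $x$ is measurable.

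The one delicate point, and the step I would treat most carefully, is this final passage to the limit together with the handling of the exceptional set $N$: completeness of the measure space is exactly what makes it go through, which explains its presence among the standing assumptions. ($\sigma$-finiteness, on the other hand, is not used in this particular argument.) All the rest is standard measure theory for real-valued functions.
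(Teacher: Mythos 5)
Your argument is correct. Note, however, that the paper does not prove this statement at all: it simply cites \cite[Proposition 1.10, Chapter 2]{hu-papageorgiou}, so there is no internal proof to match; what you supply is a complete, self-contained proof of the standard fact, and it is consistent with the paper's framework. The easy direction (measurable $\Rightarrow$ strongly measurable) is handled exactly as the paper's surrounding text suggests: measurable $\Rightarrow$ scalarly measurable, then Theorem \ref{Pettis}; this is legitimate here because the paper states these theorems for a \emph{separable} Banach space $X$ (the unqualified form of Pettis' theorem would otherwise require the essential separable-valuedness hypothesis, so it is worth saying explicitly that you are using separability of $X$). The converse via $e \mapsto d(x_n(e),C)$ for closed $C$, the Lipschitz continuity of $d(\cdot,C)$, stability of measurability under a.e.\ pointwise limits, and completeness of $(E,\mathcal{E})$ to absorb the exceptional null set is the standard route and is sound; passing from closed sets to all Borel sets (equivalently, to preimages of open sets, which is the paper's definition of measurability) via the generated $\sigma$-algebra is fine. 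The only cosmetic point is that when you write $x_n^{-1}(W)=\bigcup_{i:c_i\in W}A_i$ you should either take the $A_i$ to partition $E$ or add the set where $x_n=0$ when $0\in W$; this does not affect the argument. Your remark that $\sigma$-finiteness is not needed, while completeness is, is also accurate.
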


\subsection{Stochastic Processes}

Let $Z$ be a real separable Hilbert space with the norm $\norma{\cdot}_Z$.
\begin{defn}
A filtration $\mathbb{F}=(\mathscr{F}_t)_{t \ge 0}$ in a complete probability space $(\Omega, \mathscr{F}, \mathbf{P})$ is an increasing family of sub-$\sigma$-fields of $\mathscr{F}$. A filtration
$\mathbb{F}$
is said to satisfy the \emph{usual conditions} with respect to $\mathbf P$ if
\begin{itemize}
    \item[(i)] it is right-continuous, i.e. if $\mathscr{F}_t = \cap_{\epsilon > 0} \mathscr{F}_{t+\epsilon}$ for every $t \ge 0$;
    \item[(ii)] it is complete, i.e $\mathscr{F}_0$ (and then $\mathscr{F}_t$ for every $t \ge 0$) contains all $\mathbf{P}$-null sets of $\mathscr{F}$. 
\end{itemize}
\end{defn}
\begin{defn}
    Let $\mathbb F$ be a filtration satisfying the usual conditions on a complete probability space $(\Omega, \mathscr{F}, \mathbf{P})$. Then, $(\Omega, \mathscr{F}, \mathbf{P};\mathbb F)$ is called a {\em filtered probability space}.
\end{defn}
\begin{defn}
Let $(\Omega, \mathscr{F}, \mathbf{P};\mathbb F)$ be a filtered probability space. A $Z$-valued \emph{stochastic process} $X = (X_t)_{t \ge 0}$ is a family of random variables on $(\Omega, \mathscr{F})$, taking values in the Hilbert space $Z$.
\end{defn}

\begin{defn}
Let $(\Omega, \mathscr{F}, \mathbf{P};\mathbb F)$ be a filtered probability space. The process $X = (X_t)_{t \ge 0}$ is said to be 
\begin{itemize}
    \item[(a)] \emph{adapted} to the filtration $\mathbb{F}$ (or simply $\mathbb F$-adapted), if for every $t \ge 0$ the random variable $X_t$ is $\mathscr{F}_t$-measurable;
    \item[(b)] \emph{progressively measurable} with respect to $\mathbb{F}$  (or simply $\mathbb F$-progressively measurable) if for every $t \ge 0$ the mapping $[0,t] \times \Omega \to Z$, $(t, \omega) \to X_t(\omega)$ is $\mathcal{B}([0,t]) \times \mathscr{F}_t$-measurable, where for a given set $A$, $\mathcal{B}(A)$ denotes the Borel $\sigma$-algebra of $A$;
    \item[(c)] $\mathbf P$-\emph{integrable} if $\mathbb{E}\norma{X_t}_Z < + \infty$ for all $t \ge 0$;
    \item[(d)] a $(\mathbb F,\mathbf P)$-\emph{martingale} if it is $\mathbf P$-integrable, $\mathbb F$-adapted and $\mathbb{E}[X_t | \mathscr{F}_s] = X_s \quad \mathbf{P}$-a.s., for every $0 \le s \le t$.
    \end{itemize}
\end{defn}
\begin{defn}
Let $(\Omega, \mathscr{F}, \mathbf{P};\mathbb F)$ be a filtered probability space. Given two processes $X = (X_t)_{t \ge 0}$ and $Y = (Y_t)_{t \ge 0}$, we say that $X$ is a \emph{modification} of $Y$ if $X_t=Y_t$ $\mathbf{P}$-a.s. for every $t \geq 0$. We say that $X$ and $Y$ are \emph{indistinguishable} if, for almost all $\omega \in \Omega$, we have
\begin{equation}
    X_t(\omega)=Y_t(\omega), \quad \forall \; t \geq 0.
\end{equation}
\end{defn}
\begin{defn}
Let $(\Omega, \mathscr{F}, \mathbf{P};\mathbb F)$ be a filtered probability space. A random variable $\tau:\Omega \to [0,+\infty]$ is said to be a $\mathbb{F}$-\emph{stopping time} if for every $t \ge 0$, $\{ w \in \Omega : \tau (w) \le t \} \in \mathscr{F}_t$. For a process $X = (X_t)_{t \ge 0}$ and a $\mathbb F$-stopping time $\tau$, we denote by $X^\tau = (X_t^\tau)_{t \ge 0}$ the \emph{stopped process} of $X$ by $\tau$ defined as
\begin{equation*}
    X_t^\tau = X_{t \wedge \tau} =  X_t \mathbbm{1}_{\{ t <\tau\}} + X_\tau \mathbbm{1}_{\{t \ge \tau\}}, \quad t \ge 0.
\end{equation*}
\end{defn}

\begin{defn}
Let $(\Omega, \mathscr{F}, \mathbf{P};\mathbb F)$ be a filtered probability space. A $\mathbb F$-adapted process $X = (X_t)_{t \ge 0}$ with values in $Z$ is said to be a $(\mathbb F, \mathbf P)$-\emph{local martingale} if there exists an increasing sequence of stopping times $(\tau_n)_{n \in \mathbb{N}}$  such that
\begin{itemize}
    \item[(i)] $\tau_n \to + \infty $ as $n \to \infty$ $\mathbf P$-a.s.;
    \item[(ii)] the process $(X_t^{\tau_n})_{t \ge 0}$ is a $(\mathbb{F},\mathbf P)$-martingale, for every $n \in \mathbb N$.
\end{itemize}
We say that a sequence $(\tau_n)_{n \in \mathbb{N}}$ as above \emph{reduces} the $(\mathbb{F},\mathbf P)$-local martingale $X$.
\end{defn}
\noindent We recall that a standard one-dimensional Brownian motion is a real valued stochastic process $(\beta_t)_{t \geq 0}$ satisfying the following properties
\begin{itemize}
    \item[(i)] $\beta_0=0$;
    \item[(ii)] $\beta_t-\beta_s$ is independent of $\{\beta_r,\; r \in [0,s]\}$ for every $0 \leq s < t < +\infty$;
    \item[(iii)] $\beta_t-\beta_s$ is Gaussian with variance $(t-s)$ for every $0 \leq s < t < +\infty$.
\end{itemize}
\noindent Let $Q$ be a nonnegative trace class operator on a Hilbert space $U$.
\begin{defn}
Let $\{e_k\}_{k \in \mathbb N}$ be an orthonormal basis of $U$, $\{\lambda_k\}_{k \in \mathbb{N}}$ be the sequence of eigenvalues of $Q$ and $(\beta^k)_{k \in \mathbb N}$ be a sequence of mutually independent, standard one-dimensional Brownian motions $\beta^k: [0,+\infty) \times \Omega \to \mathbb R$ on $[0,+\infty)$. For every $t \in [0,+\infty)$, we set
 \begin{equation}
     W_t^Q:=\sum_{k=1}^{+\infty} \sqrt{\lambda_k}\, e_k \, \beta_t^k.
 \end{equation}
 The process $W^Q=(W_t^Q)_{t \ge 0}$ is called a $Q$-Wiener process on $[0,+\infty)$. When $Q$ is the identity on $U$, we will call it a {\em cylindrical} Wiener process in $U$.
 \end{defn}
 \noindent Finally, we recall some basic properties of the stochastic integral which will be useful in the sequel.
 \begin{prop} 
 \label{itoisometry}
 Let $T > 0$, $W^Q=(W_t^Q)_{t \ge 0}$ a $Q$-Wiener process on $[0,T]$ and $\Phi=(\Phi_t)_{t \in [0,T]}$ be a $\mathbb F$-progressively measurable process with values in $\mathcal{L}^0_2$ such that
 $$
\left(\mathbb{E}\displaystyle\int_0^T \|\Phi_s\|^2_{\mathcal{L}^0_2} \, ds\right)^{1/2} < \infty.
 $$
 Then,
 \begin{align}
\mathbb{E}\left(\displaystyle\int_0^t \|\Phi_s\|_{\mathcal{L}^0_2} \, dW_s^Q \right)^{2}& =\mathbb{E}\left(\displaystyle\int_0^t \|\Phi_s\|_{\mathcal{L}^0_2} \, ds\right)\\
 & =\mathbb{E}\left(\displaystyle\int_0^t Tr[(\Phi_sQ^{1/2})(\Phi_sQ^{1/2})^*]\,ds\right) \quad t \in [0,T],
 \end{align}
 where given an operator $L$, $L^*$ denotes its adjoint operator.
 \end{prop}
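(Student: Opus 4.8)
First I would read the conclusion in its intended form, namely the It\^o isometry
$\mathbb{E}\,\norma[\big]{\int_0^t\Phi_s\,dW_s^Q}_Z^2=\mathbb{E}\int_0^t\norma{\Phi_s}_{\mathcal{L}_2^0}^2\,ds$
together with the trace representation of its right-hand side. The plan is the classical two-step construction of the stochastic integral with respect to a $Q$-Wiener process: first establish the identities for elementary (step) integrands, where everything reduces to the covariance structure of $W^Q$, and then extend to a general $\mathbb{F}$-progressively measurable $\Phi$ by a density argument, using that $\int_0^\cdot\Phi_s\,dW_s^Q$ is \emph{defined} precisely as the $L^2(\Omega;Z)$-limit of the integrals of approximating elementary processes.

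\emph{Step 1: the elementary case.} Call $\Phi$ elementary if $\Phi_s=\sum_{j=0}^{N-1}\Phi_j\,\mathbbm{1}_{(t_j,t_{j+1}]}(s)$ for a partition $0=t_0<\dots<t_N=T$ and bounded $\mathscr{F}_{t_j}$-measurable $\mathcal{L}_2^0$-valued random variables $\Phi_j$, so that $\int_0^t\Phi_s\,dW_s^Q=\sum_{j}\Phi_j\bigl(W^Q_{t_{j+1}\wedge t}-W^Q_{t_j\wedge t}\bigr)$. I would expand the squared $Z$-norm of this sum: conditioning the mixed terms ($j<k$) on $\mathscr{F}_{t_k}$, the independence of the increments of $W^Q$ from the past together with their vanishing mean makes them drop out in expectation, while conditioning each diagonal term on $\mathscr{F}_{t_j}$ reduces matters to the deterministic identity
\[
\mathbb{E}\,\norma{L(W^Q_t-W^Q_s)}_Z^2=(t-s)\,\mathrm{Tr}(LQL^*)=(t-s)\,\norma{LQ^{1/2}}_{\mathcal{L}_2(U,Z)}^2,\qquad L\in\mathcal{L}(U,Z),
\]
which itself follows from the series $W^Q_t=\sum_k\sqrt{\lambda_k}\,e_k\,\beta^k_t$, the mutual independence of the $\beta^k$, and $\mathbb{E}(\beta^k_t-\beta^k_s)^2=t-s$. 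Summing over $j$ then yields $\mathbb{E}\,\norma[\big]{\int_0^t\Phi_s\,dW_s^Q}_Z^2=\mathbb{E}\int_0^t\norma{\Phi_s}_{\mathcal{L}_2^0}^2\,ds$, and the second equality in the statement is nothing but the definition of the Hilbert--Schmidt norm and trace on $\mathcal{L}_2^0=\mathcal{L}_2(Q^{1/2}U,Z)$, i.e. $\norma{\Phi_s}_{\mathcal{L}_2^0}^2=\norma{\Phi_sQ^{1/2}}_{\mathcal{L}_2(U,Z)}^2=\mathrm{Tr}\bigl[(\Phi_sQ^{1/2})(\Phi_sQ^{1/2})^*\bigr]$.

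\emph{Step 2: density and passage to the limit.} I would then use that the elementary processes are dense in $L^2_{\mathscr{F}}([0,T]\times\Omega;\mathcal{L}_2^0)$, the space of $\mathbb{F}$-progressively measurable $\Phi$ with $\mathbb{E}\int_0^T\norma{\Phi_s}_{\mathcal{L}_2^0}^2\,ds<\infty$: this is the standard approximation lemma, obtained by truncating to bounded processes, regularizing in the time variable by averaging (which preserves adaptedness), and finally discretizing time. Given $\Phi$ as in the hypothesis, choose elementary $\Phi^{(n)}\to\Phi$ in this space; by Step~1 the random variables $\int_0^t\Phi^{(n)}_s\,dW_s^Q$ form a Cauchy sequence in $L^2(\Omega;Z)$, hence converge to a limit which by definition equals $\int_0^t\Phi_s\,dW_s^Q$ and is independent of the chosen sequence. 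Letting $n\to\infty$ in the isometry of Step~1 — a linear isometry defined on a dense subspace extends uniquely to the ambient space — gives both equalities for every $t\in[0,T]$.

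\emph{Main obstacle.} The only genuinely delicate point is the density of the elementary processes in $L^2_{\mathscr{F}}([0,T]\times\Omega;\mathcal{L}_2^0)$, with the attendant measurability bookkeeping — in particular checking that the time-averaging step produces progressively measurable approximants that still converge in $L^2$. The computation in Step~1 is entirely routine once the covariance identity for $L(W^Q_t-W^Q_s)$ is in hand.
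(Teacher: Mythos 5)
Your proposal is correct, but note that the paper itself gives no proof of this proposition: it is recalled as a classical fact in the preliminaries (following the cited monographs of Da Prato--Zabczyk and Fabbri--Gozzi--\'Swi\k{e}ch), so there is no in-paper argument to compare against. Your two-step argument --- the covariance computation $\mathbb{E}\norma{L(W^Q_t-W^Q_s)}_Z^2=(t-s)\,\mathrm{Tr}(LQL^*)$ for elementary integrands, followed by density of elementary processes in $L^2_{\mathscr{F}}([0,T]\times\Omega;\mathcal{L}_2^0)$ and extension of the isometry --- is exactly the standard construction-based proof found in those references, and you also correctly read the statement in its intended form, since the paper's display contains typos (the integrands should be $\Phi_s\,dW_s^Q$ and $\norma{\Phi_s}^2_{\mathcal{L}_2^0}\,ds$ rather than the norms as written).
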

 \noindent Let $T > 0$, $W^Q=(W_t^Q)_{t \ge 0}$ a $Q$-Wiener process on $[0,T]$ and $\Phi=(\Phi_t)_{t \in [0,T]}$ be a $\mathcal{L}^0_2$-valued process stochastically integrable, $\varphi=(\varphi_t)_{t \in [0,T]}$ a $Z$-valued progressively measurable with respect to $\mathbb{F}$ and Bochner integrable $\mathbf{P}$-a.s. on $[0,T]$ process and $X_0$ a $\mathscr{F}_0$-measurable $Z$-valued random variable. Then, the following process
 $$
 X_t=X_0+\displaystyle\int_0^t \varphi_s\,ds + \int_0^t \Phi_s\, dW_s^Q, \quad t \in [0,T],
 $$
 is well defined, moreover the following classical result holds.
 \begin{prop}
 \label{itoformula}
     Assume that a function $F:[0,T] \to \mathbb{R}$ and its partial derivatives $F_t$, $F_x$ and $F_{xx}$ are uniformly continuous on bounded subsets of $[0,T]\times H$, then $\mathbf P$-a.s., for all $t \in [0,T]$
     \begin{align}
         F(t,X_t)= & F(0,X_0)+\int_0^t F_x(s,X_s) \Phi_s \, dW_s^Q \\
         & + \int_0^t F_t(s,X_s)+\langle F_x(s,X_s), \varphi_s \rangle \, ds \\
         & +\frac{1}{2} Tr [F_xx(s,X_s)(\Phi_sQ^{1/2})(\Phi_s Q^{1/2})^*]\, ds.
     \end{align}
 \end{prop}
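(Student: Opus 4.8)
The plan is to establish this by the classical three-step scheme for the It\^o formula in Hilbert spaces --- reduce to bounded data by localization, prove the identity for piecewise-constant integrands through a finite-dimensional reduction of the noise, and then lift both restrictions by approximation.

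\emph{Localization.} Since the identity is pathwise and is required only $\mathbf P$-a.s., we would first introduce, for $R>0$, the stopping time
\[
\tau_R=\inf\Big\{t\in[0,T]:\ \norma{X_t}_\spL\ge R\ \text{ or }\ \int_0^t\big(\norma{\varphi_s}_\spL+\norma{\Phi_s}_{\mathcal L_2^0}^2\big)\,ds\ge R\Big\}\wedge T .
\]
On $[0,\tau_R]$ the (continuous) path of $X$ stays in the ball $B_R=\{\norma{x}_\spL\le R\}$, the stopped integrands satisfy $\Phi\,\mathbbm{1}_{[0,\tau_R]}\in L^2(\Omega\times[0,T];\mathcal L_2^0)$ and $\varphi\,\mathbbm{1}_{[0,\tau_R]}\in L^1(\Omega\times[0,T];\spL)$, and one may replace $F$ by a function $F_R$ agreeing with $F$ on $[0,T]\times B_R$ whose derivatives $F_R,(F_R)_t,(F_R)_x,(F_R)_{xx}$ are globally bounded and uniformly continuous on $[0,T]\times\spL$. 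Since $\tau_R\uparrow T$ pathwise as $R\to\infty$, it suffices to prove the formula for the stopped process under these boundedness hypotheses.

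\emph{Simple integrands.} Assuming first that $\varphi$ and $\Phi$ are constant in $t$ on the cells of a partition $0=t_0<\dots<t_N=T$ with $\mathscr{F}_{t_j}$-measurable values, we would truncate the noise by $W^{Q,n}_t=\sum_{k=1}^{n}\sqrt{\lambda_k}\,e_k\,\beta^k_t$, denote by $Q_n$ the rank-$n$ truncation of $Q$, and let $X^n$ be obtained from $X$ by replacing $W^Q$ with $W^{Q,n}$. On each cell $[t_j,t_{j+1}]$ one has $X^n_t=X^n_{t_j}+\varphi_{t_j}(t-t_j)+\Phi_{t_j}\big(W^{Q,n}_t-W^{Q,n}_{t_j}\big)$, an It\^o process driven by the finitely many independent Brownian motions $\beta^1,\dots,\beta^n$, so the classical finite-dimensional It\^o formula applies cell by cell, and summing over $j$ yields the asserted identity with $(X,W^Q,Q)$ replaced by $(X^n,W^{Q,n},Q_n)$. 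Here the uniform continuity of $F_{xx}$ on bounded sets is precisely what makes the second-order Taylor remainders negligible and reconstructs the trace integral.

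\emph{Passage to the limit.} By the It\^o isometry (Proposition \ref{itoisometry}) and Doob's maximal inequality,
\[
\mathbb{E}\sup_{t\le T}\norma{X^n_t-X_t}_\spL^2\le 4\,\mathbb{E}\int_0^T\norma{\Phi_s(I-\pi_n)}_{\mathcal L_2^0}^2\,ds\ \xrightarrow[n\to\infty]{}\ 0 ,
\]
with $\pi_n$ the orthogonal projection onto $\mathrm{span}\{e_1,\dots,e_n\}$, so along a subsequence $X^n\to X$ a.s.\ uniformly on $[0,T]$. Since $F$ and its derivatives are now bounded and uniformly continuous, the Lebesgue integrals converge by dominated convergence; the stochastic integral converges in $L^2(\Omega)$ by the It\^o isometry applied to $F_x(s,X^n_s)\Phi_s-F_x(s,X_s)\Phi_s$ together with the $(I-\pi_n)$ correction; and the trace term converges by dominated convergence in $(\omega,s)$, using that $\Phi_sQ_n^{1/2}\to\Phi_sQ^{1/2}$ in the Hilbert--Schmidt norm (a tail estimate; cf.\ also Theorem \ref{convergenceHilbertSchmidt}) and the pointwise bound $\abs{Tr[F_{xx}(s,X^n_s)(\Phi_sQ_n^{1/2})(\Phi_sQ_n^{1/2})^{*}]}\le\norma{F_{xx}}_\infty\,\norma{\Phi_s}_{\mathcal L_2^0}^2$. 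This gives the formula for simple integrands. For general $\varphi$ (Bochner integrable $\mathbf P$-a.s.) and $\Phi\in L^2_{\mathscr{F}}([0,T];\mathcal L_2^0)$ we would choose simple $\varphi^m\to\varphi$ in $L^1(\Omega\times[0,T];\spL)$ and $\Phi^m\to\Phi$ in $L^2(\Omega\times[0,T];\mathcal L_2^0)$; the associated processes $X^m$ satisfy $\mathbb{E}\sup_{t\le T}\norma{X^m_t-X_t}_\spL^2\to0$, so a subsequence converges a.s.\ uniformly and the same passage to the limit gives the identity for the stopped process; finally $R\to\infty$ removes the localization.

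\emph{Main obstacle.} The hard part is the convergence of the two non-Lebesgue terms: for the trace term one must combine a Hilbert--Schmidt tail estimate (cf.\ Theorem \ref{convergenceHilbertSchmidt}) with dominated convergence in $(\omega,s)$, the domination being supplied by the bound on $F_{xx}$ and the integrability of $\norma{\Phi_s}_{\mathcal L_2^0}^2$ secured in the localization step, and one must verify that the second-order remainder in the finite-dimensional It\^o formula vanishes in the limit, which is exactly where the assumption that $F_{xx}$ be \emph{uniformly} continuous on bounded sets --- not merely continuous --- is used. Everything else reduces to routine applications of the It\^o isometry and Doob's maximal inequality.
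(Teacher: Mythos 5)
The paper does not actually prove Proposition \ref{itoformula}: it is recalled as a classical result (the It\^o formula for It\^o processes in Hilbert spaces, as in Da Prato--Zabczyk and Fabbri--Gozzi--\'Swi\k{e}ch, the sources the paper relies on for this preliminary material), so there is no in-paper argument to compare yours against. Your sketch reproduces the canonical proof of that classical theorem --- localization by the stopping times $\tau_R$, verification for simple integrands via a finite-dimensional truncation of the noise, then passage to the limit using the It\^o isometry, Doob's maximal inequality and dominated convergence --- and it is correct at the level of detail given; the only points left genuinely schematic (the construction of the cutoff $F_R$ with globally bounded, uniformly continuous derivatives, and the cell-by-cell application of the finite-dimensional formula on the random finite-dimensional affine subspace spanned by $\varphi_{t_j}$ and $\Phi_{t_j}e_k$, $k\le n$, which requires a chain-rule/measurability argument) are standard and do not affect the validity of the scheme.
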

\noindent To simplify the notation, in what follows we denote the $Q$-Wiener process $W^{Q}$ simply by $W$.
 
\section{Stochastic Evolution Inclusions} \label{sec:SEI}

\noindent In this section we present an existence result of mild solutions for stochastic evolution inclusions. Throughout the paper, $(\Omega, \mathscr{F},\mathbf P)$ denotes a complete probability space endowed with a filtration $\mathbb{F}=(\mathscr{F}_t)_{t \ge 0}$ satisfying the usual conditions.
All random elements will be defined on this stochastic basis without further notice. We also stipulate that random variables equal outside an event of probability zero are considered equal, and that equality of stochastic processes is intended in the sense of indistinguishability. We shall denote by $W=(W_t^Q)_{t \geq 0}$ a given $K$-valued $Q$-Wiener process, with a symmetric nonnegative trace class operator $Q$, where $K$ is a separable Hilbert space with norm $\norma{\, \cdot \,}_K$ and inner product $ \langle \cdot,\cdot\rangle_K$.
Recall that $\mathcal L_2^0=\mathcal L_2(Q^{\frac{1}{2}}K,H)$ is the Hilbert space of all Hilbert-Schmidt operators from $Q^{\frac{1}{2}}K$ to $H$, where $H$ is an additional separable Hilbert space.

\noindent We are interested in finding solutions of the problem \eqref{inclusione} in the function space $C([0,+\infty), L^2(\Omega, H))$, the normed space of continuous functions from $[0,+\infty)$ into $L^2(\Omega, H)$, satisfying 
\begin{align}
 &\sup_{t \in [0,+\infty)} \norma{u_t}^2_{L^2(\Omega, H)} < \infty. 
 \end{align}
Here, $L^2(\Omega, H)$ is the Banach space of all square integrable random variables with the norm 
$$
\norma {v}_{L^2(\Omega, H)}= ( \mathbb{E} \norma{v}^2 _H)^{\frac{1}{2}},
$$
where $\mathbb{E} \norma{v}^2 _H = \int_{\Omega} \norma{v(\omega)}^2 _H d\mathbf P_{\omega}$. In particular, we look for solutions in $\mathcal{C}$, a closed subspace of $C([0,+\infty), L^2(\Omega, H))$ consisting of all $\mathbb{F}$-adapted, $H$-valued processes $u \in C([0,+\infty), L^2(\Omega, H))$ endowed with the norm
\begin{equation*}
\norma u_{\mathcal{C}} = \sup_{t \in [0,+\infty)} \norma{u_t}_{L^2(\Omega, H)} =  \sup_{t \in [0,+\infty)} \Big( \mathbb{E} (\norma{u_t}^2_H) \Big)^{\frac{1}{2}}.
\end{equation*}

\noindent Our aim is to study the existence of mild solutions for the stochastic evolution inclusion \eqref{inclusione} in $\mathcal{C}$, by means of a weak topology approach. More precisely, denoting by $L^2_{\mathscr{F}}([0,+\infty),\mathcal L_2^0)$ the space of all $\mathbb{F}$-progressively measurable processes defined on $[0,+\infty)$ with values in $\mathcal L_2^0$ and the norm
\begin{equation}
\|\sigma\|_{L^2_{\mathscr{F}}}:=\int_0^{+\infty} \mathbb E(\|\sigma_t\|_{\mathcal L_2^0}^2) \,dt < \infty,  
\end{equation}
we look for solutions according to the following definitions, see \cite[Definitions 1.118, 1.119]{fabbrigozzi}.  

\begin{defn}
\label{mild}
A $\mathbb F$-progressively measurable stochastic process $u \in \mathcal{C}$ is said to be a {\em mild solution} of inclusion (\ref{inclusione}) if $u(0)=u_0$ and there exists $\sigma \in L^2_{\mathscr{F}}([0,+\infty),\mathcal L_2^0)$ such that $\sigma_t \in \Sigma(t, u_t)$ for a.e. $t \in [0,+\infty)$ satisfying the following integral equation
\begin{equation}
\label{eqmild}
u_t = T(t)u_0 + \int_0^t T(t-s)f(s, u_s)\,ds + \int_0^t T(t-s)\sigma_s\,dW_s, \,\, \text{for} \,\, t \in [0,+\infty).
\end{equation}
\end{defn}
\begin{defn}
\label{strong}
A $\mathbb F$-progressively measurable stochastic process $u \in \mathcal{C}$ is said to be a {\em strong solution} of inclusion (\ref{inclusione}) if
\begin{itemize}
    \item[(i)] $u_t \in D(A)$, $dt \otimes d\mathbf P$-a.e., $\displaystyle\int_0^\tau \|Au_t\|_H \, dt < \infty$ $\mathbf P$-a.s., for every $\tau \in [0,+\infty)$;
    \item[(ii)] there exists $\sigma \in L_{\mathscr{F}}^2([0,+\infty), \mathcal L_2^0)$ such that $\sigma_t \in \Sigma(t, u_t)$ for a.e. $t \in [0,+\infty)$ satisfying the following integral equation
    $$
    u_t = u_0 + \int_0^t (A u_s+f(s, u_s))\,ds + \int_0^t \sigma_s\,dW_s, \,\, \text{for} \,\, t \in [0,+\infty).
    $$
\end{itemize}
\end{defn}
\noindent A strong solution is also a mild solution, to have the inverse implication we need to assume additional hypotheses on the operator $A$, the map $f$ and the multivalued map $\Sigma$ as we will show in the next section, following the proof of \cite[Theorem 2]{albeverio2017ito}.

\subsection{Statement of the Problem}

We study the stochastic evolution inclusion \eqref{inclusione} under the following assumptions:
\begin{itemize}
    \item[($H_A$)] the operator $A$ generates a strongly continuous semigroup $\{T(t)\}_{t\ge 0}$ in $H$;
    \item[($H_1$)] the function $f(\cdot, u): [0,+\infty) \to \spL$ is measurable for every $u \in H$; 
    \item[($H_2$)] the function $f(t, \cdot) : \spL \to \spL$ is linear and continuous for each $t \in [0,+\infty)$;
    \item[($H_3$)] there exists a function $C_f \in L^1([0,+\infty),\mathbb{R}_+)$ such that 
    \begin{equation}
      \norma{f(t,u)}_H^2  \le C_f(t)(1 +\norma{u}_\spL^2)  
    \end{equation}
    for all $t \in [0,+\infty)$ and $u \in H$;
\end{itemize}
The multimap $\Sigma : [0,+\infty)  \times \spL \to P(\mathcal L_2^0)$ has closed bounded and convex values, and satisfies the following conditions:
\begin{itemize}
    \item[($H_4$)] $\Sigma : [0,+\infty)  \times \spL \to P(\mathcal L_2^0)$ admits a $(\mathcal{B}([0,t]) \times \mathcal{B}(H)$, $\mathcal{B}(\mathcal L_2^0))$ jointly measurable selection for every $t \geq 0$, i.e. there exists a map $\sigma: [0,+\infty) \times H \to \mathcal L_2^0$ such that is $(\mathcal{B}([0,t]) \times \mathcal{B}(H)$,$\mathcal{B}(\mathcal L_2^0))$ jointly measurable for every $t \geq 0$ and $\sigma(t,x) \in \Sigma(t,x)$ for a.e. $(t,x) \in [0,+\infty) \times H$;
    
    \item[($H_5$)] $\Sigma(t, \cdot) : \spL \multimap P(\mathcal L_2^0) $ is convex and weakly sequentially closed for a.e. $t \in [0,+\infty)$, i.e. it has a weakly sequentially closed graph;
    \item[($H_6$)] there exists a function $C_\Sigma \in L^1([0,+\infty),\mathbb{R}_+)$ such that 
    \begin{equation}
\norma{\Sigma(t,u)}_{\mathcal L_2^0}^2  \le C_\Sigma(t) (1 + \norma{u}_\spL^2) 
    \end{equation}
    for all $t \in [0,+\infty)$ and $u \in H$,
    where 
    \begin{equation}
       \norma {\Sigma(t, u)}_{\mathcal L_2^0} = \sup \{\norma{\sigma_t}_{\mathcal L_2^0} : \sigma_t \in \Sigma(t, u), \, \mbox{for a.e.} \, t \in [0,+\infty) \}.
    \end{equation}   
\end{itemize}
\noindent We will give now an example of a multivalued map that satisfies the assumptions $(H_4)$, $(H_5)$ and $(H_6)$, see \cite{benedetti2010semilinear}.
\begin{ex}
Let us consider the multimap $\Sigma : [0,+\infty)  \times \spL \to P(\mathcal L_2^0)$ defined as
\begin{equation}
\Sigma(t,u) = g(t) \langle\varphi, \, u \rangle_H G,
\end{equation}
where
\begin{itemize}
\item[(i)] $g \in {L^{\infty}([0,+\infty), \mathbb{R})}$;
\item[(ii)] $\varphi \in H$;
\item[(iii)] $G \subset \mathcal{L}^0_2$ is nonempty, convex, bounded and closed.
\end{itemize}
We prove that $\Sigma$ satisfies the assumptions $(H_4)$, $(H_5)$, $(H_6)$.

\noindent According to (iv), $\Sigma$ has nonempty, bounded, closed and convex values.

\noindent By the measurability of $g$ and the linearity of the scalar product, it is easy to show that $\Sigma$ is measurable and so by Theorem \ref{Kuratowsky} it admits a measurable selection. 

\noindent Again by the linearity of the scalar product, $\Sigma(t,\cdot)$ is convex for a.e. $t \in [0,+\infty)$ and the map $u\longmapsto \langle\varphi, \, u \rangle_H$ is continuous from
$H_{w}$ to $\mathbb{R}$, thus $\Sigma(t, \cdot) \,: H \to P(\mathcal{L}^0_2)$ is u.s.c. with respect to the weak topology, because it is the product of a weakly continuous single valued map with a constant multivalued map, hence it is weakly closed because it is u.s.c. with respect to the weak topology with weakly closed values.

\noindent By the boundedness of the set $G$, there exists a constant $N$ such that $\|G\|_{\mathcal L^0_2} \leq N$. Thus, for all $u \in H$ and a.a. $ t \in [0,+\infty)$, we have that 
\begin{align}
||\Sigma(t,u)||_{\mathcal L^0_2} & \le |g(t)| |\langle \varphi,u\rangle| \|G\|_{\mathcal L^0_2} \leq |g(t)| \|\varphi\|_H \|u\|_H N.
\end{align}
Hence, hypothesis $(H_6)$ is also fulfilled, with $C_\Sigma(t)= N |g(t)| \|\varphi\|_H$.
\end{ex}

\noindent We state now the main result of this section.
\begin{thm}
\label{th-main}
Assume that $(H_A),(H_1)-(H_6)$ hold. 
Then, given $u_0 \in L^2(\Omega,H)$, the inclusion \eqref{inclusione} has at least one mild solution on the whole half line $[0,+\infty)$, $u \in \mathcal{C}$.
\end{thm}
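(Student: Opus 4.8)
The plan is to prove Theorem \ref{th-main} by a truncation–and–limit procedure: first solve the inclusion \eqref{inclusione} on each bounded interval $[0,k]$, $k\in\mathbb N$, by a fixed–point argument carried out entirely in the weak topology, and then pass to the limit to produce a mild solution on $[0,+\infty)$. On $[0,k]$ we work in the space $\mathcal C^k$ of $\mathbb F$-adapted continuous maps $[0,k]\to L^2(\Omega,H)$. The first point is that the selection set $Sel^k_\Sigma(u)=\{\sigma\in L^2_{\mathscr F}([0,k],\mathcal L_2^0):\sigma_s\in\Sigma(s,u_s)\ \text{a.e.}\}$ is nonempty for every $u\in\mathcal C^k$: by $(H_4)$ there is a jointly measurable selection $\sigma(\cdot,\cdot)$ of $\Sigma$, and composing it with the progressively measurable process $u$ and invoking Theorems \ref{Kuratowsky}, \ref{Pettis} and \ref{mis} gives an $\mathbb F$-progressively measurable selection $s\mapsto\sigma_s\in\Sigma(s,u_s)$, which lies in $L^2_{\mathscr F}([0,k],\mathcal L_2^0)$ by the sublinear bound $(H_6)$ together with $C_\Sigma\in L^1$ and the boundedness of $u$.

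\emph{Existence on $[0,k]$.} Define the solution multimap $\mathcal S_k:\mathcal C^k\to P(\mathcal C^k)$ by declaring $v\in\mathcal S_k(u)$ iff $v_t=T(t)u_0+\int_0^tT(t-s)f(s,u_s)\,ds+\int_0^tT(t-s)\sigma_s\,dW_s$ for some $\sigma\in Sel^k_\Sigma(u)$; using $(H_A)$, $(H_3)$, $(H_6)$, the It\^o isometry (Proposition \ref{itoisometry}) and strong continuity of $\{T(t)\}$ one checks $v\in\mathcal C^k$, and a Gronwall estimate produces a closed ball $D_k\subset\mathcal C^k$ with $\mathcal S_k(D_k)\subseteq D_k$. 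The values of $\mathcal S_k$ are convex, because $\Sigma(t,\cdot)$ is convex (so $Sel^k_\Sigma(u)$ is convex) and the drift and stochastic–convolution operators are affine in $\sigma$. For relative weak compactness of $\mathcal S_k(D_k)$: the union of all the selection sets over $D_k$ is bounded and uniformly integrable in $L^1([0,k]\times\Omega,\mathcal L_2^0)$ (indeed in $L^2$) by $(H_6)$ and $C_\Sigma\in L^1$, hence relatively weakly compact by Theorem \ref{weakcompL1}; since the linear drift and stochastic–convolution maps are continuous (hence weakly continuous), $\mathcal S_k(D_k)$ is relatively weakly compact, and by Theorem \ref{zhou th 1.26} relatively weakly sequentially compact; replacing $D_k$ by the closed convex hull of $\mathcal S_k(D_k)$, which stays invariant and stays weakly compact by Theorem \ref{zhou th 1.27}, we may take $D_k$ convex, bounded and closed. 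Finally one proves that the graph of $\mathcal S_k$ is weakly sequentially closed: given $u^m\rightharpoonup u$ in $\mathcal C^k$, $v^m\in\mathcal S_k(u^m)$ with selections $\sigma^m$, and $v^m\rightharpoonup v$, one extracts $\sigma^m\rightharpoonup\sigma$ in $L^2_{\mathscr F}([0,k],\mathcal L_2^0)$; linearity of $f$ in $(H_2)$ lets the drift term pass to the weak limit, the stochastic convolution is linear and weakly continuous (using Theorem \ref{convergenceHilbertSchmidt} for the Hilbert–Schmidt structure), and a Mazur–type argument together with $(H_5)$ (convexity plus weak sequential closedness of $\Sigma(t,\cdot)$) gives $\sigma_t\in\Sigma(t,u_t)$ a.e., so $v\in\mathcal S_k(u)$. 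A fixed–point theorem for convex–valued, weakly sequentially closed multimaps with relatively weakly compact range, of the type used in \cite{benedetti2013nonlocal, zpa2018}, then yields a mild solution $u^k\in\mathcal C^k$ of \eqref{inclusione} on $[0,k]$.

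\emph{Passage to $[0,+\infty)$.} A Gronwall–type a priori estimate exploiting the sublinear growth $(H_3)$, $(H_6)$ and the integrability $C_f,C_\Sigma\in L^1([0,+\infty))$ furnishes a bound on $\sup_{t\in[0,k]}\|u^k_t\|_{L^2(\Omega,H)}$ that is uniform in $k$; note that, for $K\le k$, the restriction $u^k|_{[0,K]}$ automatically satisfies the integral equation \eqref{eqmild} on $[0,K]$ (the right-hand side only involves times $\le t\le K$), hence is itself a mild solution on $[0,K]$. Extending each $u^k$ to $[0,+\infty)$ (for instance by $u^k_t:=T(t-k)u^k_k$ for $t>k$) the sequence lies in a bounded subset of $\mathcal C$; a diagonal procedure over the intervals $[0,k]$, combined with the relative weak sequential compactness of the solution sets on each $[0,k]$ and of the associated selections in $L^2_{\mathscr F}([0,k],\mathcal L_2^0)$, extracts a subsequence $u^m\rightharpoonup u$ and $\sigma^m\rightharpoonup\sigma$ on every $[0,k]$. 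Repeating the weak–closedness argument of the previous step on each $[0,k]$ shows that $u$ satisfies \eqref{eqmild} and $\sigma_t\in\Sigma(t,u_t)$ for a.e. $t\in[0,k]$, for every $k$, hence for a.e. $t\in[0,+\infty)$; and $\sigma\in L^2_{\mathscr F}([0,+\infty),\mathcal L_2^0)$ since $\int_0^{+\infty}\mathbb E\|\sigma_t\|_{\mathcal L_2^0}^2\,dt\le\int_0^{+\infty}C_\Sigma(t)\,(1+\|u\|_{\mathcal C}^2)\,dt<\infty$. Thus $u\in\mathcal C$ is the desired mild solution.

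\emph{Main obstacle.} The crux is the weak sequential closedness of the solution multimap. Weak convergence $u^m\rightharpoonup u$ in $L^2(\Omega,H)$ does not pass to almost–sure weak convergence in $H$, which is exactly why the linearity of $f$ and the convexity of $\Sigma(t,\cdot)$ are assumed; controlling the stochastic convolution $\int_0^t T(t-s)\sigma^m_s\,dW_s$ under merely weak convergence of the integrands requires the It\^o isometry together with the Hilbert–Schmidt convergence result Theorem \ref{convergenceHilbertSchmidt}, and identifying the limit selection as a selection of $\Sigma$ needs a careful Mazur/convexity argument resting on $(H_5)$. A secondary technical point is verifying that the a priori bound in the last step is genuinely uniform in $k$, where the full strength of $C_f,C_\Sigma\in L^1([0,+\infty))$ (and the sublinear, rather than linear-in-the-squared-norm, nature of the growth conditions) is used.
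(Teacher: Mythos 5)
Your proposal follows essentially the same route as the paper: nonemptiness of $Sel_\Sigma^k(u)$ via $(H_4)$--$(H_6)$, a weak-topology fixed point (O'Regan-type theorem for weakly sequentially closed, convex-valued maps) on an invariant convex, weakly compact set for the truncated problem on $[0,k]$, with the linearity of $f$, convexity of $\Sigma(t,\cdot)$ and a Mazur argument identifying the limit selection, followed by a nested-subsequence (diagonal) passage to $[0,+\infty)$ using the weak compactness of the solution sets on each $[0,k]$. The differences are only bookkeeping: the paper's invariant set is the exponentially weighted set $\mathscr{Q}=\{u:\mathbb{E}\|u_t\|_H^2\le Re^{Lt}\}$ with $L$ chosen so that $q<1$ (which is the precise form of your ``Gronwall ball''), it extends $u_k$ past $k$ by the constant value $u_k(k)$ rather than by $T(t-k)u^k_k$, and it glues the per-interval selections piecewise rather than extracting a single global weak limit $\sigma$ as you do.
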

\begin{rem}
\label{mild-strong}
{\rm We notice that, under the assumptions of Theorem \ref{th-main}, every mild solution $u$ of \eqref{inclusione} such that $u_t \in D(A)$ for every $t \in [0,+\infty)$ is a strong solution of \eqref{inclusione}. Indeed, let $u$ be a mild solution of \eqref{inclusione}, from the assumptions for $t \in [0,+\infty)$ we have
\begin{equation}
\begin{split}
\displaystyle\int_0^t A u_s \, ds & = \int_0^t A T(s)u_0 \,ds + \int_0^t \int_0^s A T(s-r)f(r, u_r)\,dr\,ds \\
& \qquad + \int_0^t \int_0^s A T(s-r)\sigma_r\,dW_r\,ds.
\end{split}
\end{equation}
Hence, by applying the Fubini Theorem for Lebesgue integrals and the Fubini Theorem for stochastic integrals (see \cite[Theorem 4.33]{DaPrato}), we get
\begin{equation}
\begin{split}
\displaystyle\int_0^t A u_s \, ds & = \int_0^t A T(s)u_0 \,ds + \int_0^t \int_r^t A T(s-r)f(r, u_r)\,ds\,dr \\ & \qquad + \int_0^t \int_r^t A T(s-r)\sigma_r\,ds \, dW_r.
\end{split}
\end{equation}
Now, applying the formula
$$
\int_0^t A T(s) \xi \,ds=T(t)\xi-\xi, \quad \forall \; \xi \in D(A),
$$
we get that $Au_t$ is integrable with probability one and
\begin{equation}
\begin{split}
\displaystyle\int_0^t A u_r \, dr &  = T(t)u_0-u_0+ \int_0^t T(t-r)f(r, u_r)\,dr -\int_0^t f(r,u_r)\,dr \\
& \qquad + \int_0^t T(t-r)\sigma_r\,dW_r
 -\int_0^t \sigma_r \,dWr.
\end{split}
\end{equation}
Thus, obtaining
$$
u_t=u_0+\int_0^t A u_r \, dr + \int_0^t f(r,u_r)\,dr + \int_0^t \sigma_r \,dWr.
$$
By Definition \ref{strong} $u$ is a strong solution of \eqref{inclusione}.
}
\end{rem}
\noindent Moreover, we notice that in the case of a semigroup of contraction generated by the linear part $A$, under the assumptions $(H_3)$ and $(H_6)$ it is possible to obtain the global boundedness of the solution of \eqref{inclusione} as stated in the following proposition.
\begin{prop}
\label{boundsol}
 Assume that $A$ generates a semigroup of contraction and that $(H_1)-(H_6)$ hold. Then, if $u_0 \in L^2(\Omega,H)$, there exists a constant $\overline{C} > 0$ such that
 \begin{equation}
     \mathbb{E}\norma{u_t}_H^2 \leq \overline{C} \quad \mbox{for every} \; t \in [0,+\infty),
 \end{equation}
 where $u\in \mathcal{C}$ is a mild solution of \eqref{inclusione}.
\end{prop}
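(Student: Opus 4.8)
The plan is to convert the mild representation \eqref{eqmild} into a scalar integral inequality for $v(t):=\mathbb{E}\norma{u_t}_H^2$ and to close it by Gronwall's lemma. First I would split $u_t$ into the three summands $T(t)u_0$, $\int_0^t T(t-s)f(s,u_s)\,ds$ and $\int_0^t T(t-s)\sigma_s\,dW_s$, use the elementary inequality $\norma{x+y+z}_H^2\le 3\big(\norma{x}_H^2+\norma{y}_H^2+\norma{z}_H^2\big)$ and take expectations. Since $A$ generates a \emph{contraction} semigroup, $\norma{T(r)}_{\mathcal{L}(H)}\le 1$ for all $r\ge 0$, so the contribution of the first summand is at most $3\,\mathbb{E}\norma{u_0}_H^2$, a quantity depending only on the datum $u_0$.

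For the drift summand I would first observe that $(H_2)$ and $(H_3)$ together yield the homogeneous bound $\norma{f(s,u)}_H\le\sqrt{C_f(s)}\,\norma{u}_H$: since $f(s,\cdot)$ is linear, replacing $u$ by $\lambda u$ in $(H_3)$ and letting $\lambda\to+\infty$ removes the additive constant. Using $\norma{T(t-s)}_{\mathcal{L}(H)}\le 1$, the Cauchy--Schwarz inequality in the time variable and Fubini's theorem, I would then bound $\mathbb{E}\left\|\int_0^t T(t-s)f(s,u_s)\,ds\right\|_H^2$ by $\norma{C_f}_{L^1}\int_0^t v(s)\,ds$. For the stochastic convolution I would use the It\^o isometry of Proposition \ref{itoisometry} --- which applies on every finite horizon $[0,t]$, since $(H_6)$ and $u\in\mathcal{C}$ guarantee $\mathbb{E}\int_0^t\norma{\sigma_s}_{\mathcal{L}_2^0}^2\,ds<\infty$ --- together with the ideal inequality $\norma{T(t-s)\sigma_s}_{\mathcal{L}_2^0}\le\norma{\sigma_s}_{\mathcal{L}_2^0}$ and, since $\sigma_s\in\Sigma(s,u_s)$, condition $(H_6)$, so as to obtain $\mathbb{E}\left\|\int_0^t T(t-s)\sigma_s\,dW_s\right\|_H^2\le \norma{C_\Sigma}_{L^1}+\int_0^t C_\Sigma(s)\,v(s)\,ds$.

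Adding the three contributions gives an inequality of the form $v(t)\le a+\int_0^t\beta(s)\,v(s)\,ds$, where $a$ depends only on $\mathbb{E}\norma{u_0}_H^2$ and $\norma{C_\Sigma}_{L^1}$, and $\beta$ is assembled from $C_f$ and $C_\Sigma$. Since $u\in\mathcal{C}$, the function $v$ is finite and bounded on every bounded interval, so Gronwall's inequality applies; exploiting that $C_f$ and $C_\Sigma$ are globally integrable on $[0,+\infty)$, one then reads off from the Gronwall estimate a bound for $v(t)$ which is uniform in $t$ and depends only on the data, and one takes $\overline{C}$ equal to that bound.

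I expect the genuine difficulty to lie not in the individual estimates, which are routine once the contraction property and the homogeneous bound on $f$ are available, but in the last step: arranging the two integral estimates so that the Gronwall procedure delivers a constant that is uniform over the \emph{entire} half-line, rather than one growing with the length of the interval. This is precisely the point at which the linearity of $f$ --- which eliminates the additive term from the drift estimate --- and the global $L^1$-integrability of the weights $C_f$ and $C_\Sigma$ must be exploited. A secondary technical matter is the rigorous treatment of the stochastic convolution: one must check that $s\mapsto T(t-s)\sigma_s$ is an admissible integrand on each $[0,t]$ and control the Hilbert--Schmidt norm of the composition $T(t-s)\sigma_s$ so that the contraction estimate can be invoked cleanly.
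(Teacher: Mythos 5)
Your decomposition, the use of the contraction property, and the treatment of the stochastic convolution via the It\^o isometry all match the paper's argument, but there is a genuine gap at the decisive point: the drift estimate you set up cannot deliver a bound that is uniform on the half-line. After your Cauchy--Schwarz step the drift term is controlled by $\|C_f\|_{L^1}\int_0^t \mathbb{E}\|u_s\|_H^2\,ds$, so the inequality you feed into Gronwall reads $v(t)\le a+\int_0^t\bigl(3\|C_f\|_{L^1}+3\,\mathrm{tr}(Q)\,C_\Sigma(s)\bigr)v(s)\,ds$. The weight now contains the constant $3\|C_f\|_{L^1}$, which is \emph{not} integrable on $[0,+\infty)$, and Gronwall only yields $v(t)\le a\,e^{3\|C_f\|_{L^1}t+3\,\mathrm{tr}(Q)\|C_\Sigma\|_{L^1}}$, which grows without bound as $t\to\infty$. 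So your concluding claim that uniformity can be ``read off'' from the Gronwall estimate is unjustified, and a different Cauchy--Schwarz splitting does not rescue it: splitting off $C_f^{1/4}$ instead produces the prefactor $\int_0^t\sqrt{C_f(s)}\,ds$, which is unbounded in $t$ unless one additionally assumes $\sqrt{C_f}\in L^1([0,+\infty))$.

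The paper proceeds differently at exactly this step: it keeps the $s$-dependent weight inside the time integral, bounding the drift convolution by $3\int_0^t\mathbb{E}\|f(s,u_s)\|_H^2\,ds$ and then invoking $(H_3)$ directly, so that the inequality becomes $v(t)\le C+3\int_0^t\bigl(C_f(s)+\mathrm{tr}(Q)C_\Sigma(s)\bigr)v(s)\,ds$ with $C=3\bigl(\mathbb{E}\|u_0\|_H^2+\|C_f\|_{L^1}+\mathrm{tr}(Q)\|C_\Sigma\|_{L^1}\bigr)$; the Gronwall exponent is then $3\int_0^t(C_f(s)+\mathrm{tr}(Q)C_\Sigma(s))\,ds\le 3(\|C_f\|_{L^1}+\mathrm{tr}(Q)\|C_\Sigma\|_{L^1})$, and the resulting constant $\overline{C}$ is uniform in $t$. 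Note also that your use of linearity to remove the additive constant from $(H_3)$ is not what secures uniformity: the paper keeps that constant, which merely contributes the finite quantity $3\|C_f\|_{L^1}$ to $C$; what matters is that the function multiplying $v(s)$ under the integral sign be the integrable weight $C_f(s)+\mathrm{tr}(Q)C_\Sigma(s)$ rather than a constant. To repair your argument you must estimate the drift term so as to retain $C_f(s)$ as the Gronwall weight (this passage from the squared Bochner integral to a weighted integral of $v$ is the genuinely delicate step of the whole proof), instead of absorbing it into $\|C_f\|_{L^1}$.
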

\begin{proof}
    Let $u \in \mathcal{C}$ be a mild solution of \eqref{inclusione}. Then, we have that there exists a process $\sigma \in L^2([0,+\infty),\mathcal L_2^0)$, with $\sigma_t \in \Sigma(t, u_t)$ for a.e. $t \in [0,+\infty)$ such that
    \begin{align}
        \mathbb{E}\norma{u_t}_H^2 & \leq  3 \mathbb{E}\norma{u_0}^2_H + 3 \displaystyle\int_0^t \mathbb{E}\norma{f(s,u_s)}^2_H \,ds + 3 {\rm tr}(Q)\displaystyle\int_0^t \mathbb{E}\norma{\sigma_s}_{\mathcal L_2^0}^2\,ds,
    \end{align}
    for every $t \in [0,+\infty)$. By assumptions $(H_3)$ and $(H_6)$ we have that
    \begin{align}
      \mathbb{E}\norma{u_t}_H^2 \leq & 3\mathbb{E}\norma{u_0}_H^2 +3 \|C_f\|_{L^1([0,+\infty), \mathbb{R})}+3{\rm tr}(Q)\norma{C_\Sigma}_{L^1([0,+\infty), \mathbb{R})} \\
      & + 3 \displaystyle\int_0^t (C_f(s)+{\rm tr}(Q) C_\Sigma(s)) \mathbb{E}\norma{u_s}_H^2 \,ds.
    \end{align}
    By the Gronwall Lemma for every $t \in [0,+\infty)$ it follows that 
\begin{align}
   \mathbb{E}\norma{u_t}_H^2 & \leq C e^{3 \int_0^t (C_f(s)+{\rm tr}(Q) C_\Sigma(s))\,ds} \\
   & \leq C e^{3 \left(\|C_f\|_{L^1([0,+\infty), \mathbb{R})}+{\rm tr}(Q)\norma{C_\Sigma}_{L^1([0,+\infty), \mathbb{R})}\right)} = : \overline{C}, 
\end{align}   
where we have set $C=3\left(\mathbb{E}\norma{u_0}_H^2 + \|C_f\|_{L^1([0,+\infty), \mathbb{R})}+{\rm tr}(Q)\norma{C_\Sigma}_{L^1([0,+\infty), \mathbb{R})}\right)$.
\end{proof}
\begin{rem}
    In order to obtain only a local existence result, that is there exists a maximal interval $[0,t_M[$ such that \eqref{inclusione} has a mild solution for every $t \in [0,t_M[$, with $t_M < +\infty$, it is possible to weaken the assumptions $(H_3)$ and $(H_6)$ as follows
    \begin{itemize}
    \item[$(H_3)^\prime$] for every $r>0$, there exists a function $\ell_r \in L^1([0,+\infty), \mathbb{R}^{+})$ such that for each $u \in \spL$ with $\|u\|_H^2 \le r$, 
        \begin{equation*}
        \norma {f(t,u)}_{H}^2 \le \ell_r(t) \quad \text{for a.e.} \quad t \in [0,+\infty);
        \end{equation*}
        \item[$(H_6)^\prime$] for every $r>0$, there exists a function $\mu_r \in L^1([0,+\infty), \mathbb{R}^{+})$ such that for each $u \in \spL$,with $\|u\|_H^2 \le r$, 
        \begin{equation*}
        \norma {\Sigma(t, u)}^2_{\mathcal L_2^0} \le \mu_r(t) \quad \text{for a.e.} \quad t \in [0,+\infty).
        \end{equation*}
        \end{itemize}
\end{rem}
\noindent To establish the existence of solutions for the problem \eqref{inclusione} on the half-line $[0,+\infty)$, we employ an appropriate approximation technique that relies on truncating the problem \eqref{inclusione} to the bounded interval $[0,k]$ where $k > 0$.

\section{Approximating problems}\label{sec:approx}

Let $k \in \mathbb{N}$, $k > 0$. We consider \eqref{inclusione} restricted to the bounded interval $[0,k]$:
\begin{equation}
\label{inclusione-approssimata}
\begin{cases}
du_t \in [Au_t + f(t, u_t)]dt + \Sigma(t,u_t)dW_t, \quad t \in [0,k] \\
u(0)=u_0,
\end{cases}
\end{equation}
under assumptions $(H_A)$, $(H_1)-(H_6)$. 

\noindent Let us denote by $\mathcal{C}^k=\{u\big{|}_{[0,k]} : u \in \mathcal{C}\}$. Clearly, the space $\mathcal{C}^k$ is endowed with the norm of $\mathcal{C}$ restricted on $[0,k]$.

\noindent With the next result, we provide sufficient conditions for the existence of a selection of the multivalued map $\Sigma(\cdot,u_\cdot)$ for any $u \in \mathcal{C}^k$.
\begin{lem}
\label{zhou lemma 8.1}
Assume that the multimap $\Sigma$ satisfies conditions $(H_4), (H_5), (H_6)$. Then, for every $k \in \mathbb{N}$ the set 
\begin{equation*}
Sel_{\Sigma}^k(u) = \{\sigma \in L^2_{\mathscr{F}}([0,k], \mathcal L_2^0) : \sigma_t \in \Sigma(t, u_t) \quad \text{for a.e.} \, t \in [0,k] \}.
\end{equation*} 
is nonempty for any $u \in \mathcal{C}^k$.
\end{lem}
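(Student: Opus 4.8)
The plan is to produce a member of $Sel_{\Sigma}^k(u)$ explicitly, by composing the jointly measurable selection of $\Sigma$ furnished by $(H_4)$ with the process $u$, and then to verify, one property at a time, that the resulting process is $\mathbb F$-progressively measurable, is a selection of $t\mapsto\Sigma(t,u_t)$, and is square-integrable. Only $(H_4)$ and $(H_6)$ enter the argument; $(H_5)$ is not needed here.

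First I would fix $u\in\mathcal C^k$. Since $u$ is $\mathbb F$-adapted and, being an element of $C([0,+\infty),L^2(\Omega,H))$, mean-square continuous (hence stochastically continuous), it admits an $\mathbb F$-progressively measurable modification; passing to it affects neither the conditions defining $Sel_\Sigma^k(u)$ nor the norm $\|u\|_{\mathcal C}$, so I may and do assume $u$ itself is $\mathbb F$-progressively measurable. Thus, for every $t\in[0,k]$, the map $(s,\omega)\mapsto u_s(\omega)$ is $(\mathcal B([0,t])\otimes\mathscr F_t,\mathcal B(H))$-measurable on $[0,t]\times\Omega$.

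Next, let $\sigma:[0,+\infty)\times H\to\mathcal L_2^0$ be the selection from $(H_4)$ and set $\widehat\sigma_t(\omega):=\sigma(t,u_t(\omega))$ for $(t,\omega)\in[0,k]\times\Omega$. I would prove $\widehat\sigma$ is $\mathbb F$-progressively measurable by a composition argument: for each $t\in[0,k]$, the map $[0,t]\times\Omega\to[0,t]\times H$, $(s,\omega)\mapsto(s,u_s(\omega))$, is $(\mathcal B([0,t])\otimes\mathscr F_t,\ \mathcal B([0,t])\otimes\mathcal B(H))$-measurable (its first component is the projection, its second is the progressively measurable $u$), and composing with the $(\mathcal B([0,t])\otimes\mathcal B(H),\mathcal B(\mathcal L_2^0))$-measurable map $\sigma|_{[0,t]\times H}$ from $(H_4)$ shows that $(s,\omega)\mapsto\widehat\sigma_s(\omega)$ is $(\mathcal B([0,t])\otimes\mathscr F_t,\mathcal B(\mathcal L_2^0))$-measurable. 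The selection property is then immediate: by $(H_4)$ there is a Lebesgue-null set $N\subset[0,+\infty)$ with $\sigma(t,x)\in\Sigma(t,x)$ for every $t\notin N$ and every $x\in H$, hence $\widehat\sigma_t(\omega)=\sigma(t,u_t(\omega))\in\Sigma(t,u_t(\omega))$ for a.e. $t\in[0,k]$ and every $\omega\in\Omega$. For integrability, I would invoke $(H_6)$: since $\|\widehat\sigma_t(\omega)\|_{\mathcal L_2^0}^2\le\|\Sigma(t,u_t(\omega))\|_{\mathcal L_2^0}^2\le C_\Sigma(t)(1+\|u_t(\omega)\|_H^2)$, taking expectations and integrating gives
$$
\int_0^k\mathbb E\big(\|\widehat\sigma_t\|_{\mathcal L_2^0}^2\big)\,dt\le\int_0^k C_\Sigma(t)\big(1+\mathbb E\|u_t\|_H^2\big)\,dt\le\|C_\Sigma\|_{L^1([0,+\infty),\mathbb R_+)}\big(1+\|u\|_{\mathcal C}^2\big)<\infty,
$$
using $\sup_{t\ge0}\mathbb E\|u_t\|_H^2=\|u\|_{\mathcal C}^2<\infty$ and $C_\Sigma\in L^1$. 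Therefore $\widehat\sigma\in L^2_{\mathscr F}([0,k],\mathcal L_2^0)$ and $\widehat\sigma\in Sel_\Sigma^k(u)$, so the set is nonempty.

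The main obstacle is the measurability bookkeeping: one must take care to replace $u$ by a progressively measurable representative and then track the product $\sigma$-algebras precisely, so that the composition $\sigma(\cdot,u_\cdot(\cdot))$ comes out $\mathbb F$-progressively measurable and not merely jointly Borel measurable. Everything else — the selection inclusion and the $L^2$-bound — follows directly from $(H_4)$ and $(H_6)$.
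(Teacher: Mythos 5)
Your proposal is correct and takes essentially the same route as the paper: compose the jointly measurable selection furnished by $(H_4)$ with a progressively measurable modification of $u$, check that the composition is $\mathbb F$-progressively measurable, and use $(H_6)$ for the $L^2$ bound (the paper's proof likewise never uses $(H_5)$). If anything, your direct composition argument for progressive measurability is cleaner than the paper's preimage computation of $g^{-1}(U)$, which tacitly treats $\sigma^{-1}(U)$ as a product set $D_1\times D_2$.
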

\begin{proof}
Let $u \in \mathcal{C}^k$. By assumptions, $u$ is $\mathbb{F}$-adapted, and has continuous trajectories, moreover $H$ is a separable Hilbert space, thus $u$ has a $\mathbb{F}$-progressively measurable modification, still denoted by $u$, (see \cite[Proposition 3.6]{DaPrato}. We denote with $g:[0,k] \times \Omega \to \mathcal{L}_2^0$ the map defined as
\begin{equation}
    g(t,\omega)=\sigma(t,u(t,\omega)),\quad (t,\omega) \in [0,k] \times \Omega,
\end{equation}
where $\sigma$ is the selection of the multimap $\Sigma$ from assumption $(H_4)$. Notice that by construction
\begin{equation}
\label{sel}
    g(t,\omega)=\sigma(t,u(t,\omega)) \in \Sigma(t,u(t,\omega)) \quad \mbox{for a.e.} \; (t,\omega) \in [0,k] \times \Omega.
\end{equation}
Now, let $U \subset \mathcal L_2^0$ be an open set. We will prove that for every $t \in [0,k]$, $g^{-1}(U)$ is $\mathcal{B}([0,t]) \times \mathscr{F}_t$ measurable, where we recall that
\begin{equation}
    g^{-1}(U)=\{(t,\omega) \in [0,k] \times \Omega: \; g(t,\omega)=\sigma(t,u(t,\omega)) \in U \}.
\end{equation}
Notice that 
\begin{equation}
    g^{-1}(U)=\{(t,\omega) \in [0,k] \times \Omega: \; \exists \; (t,x) \in D: \ u(t,\omega)=x\}
\end{equation}
with
\begin{equation}
    D=\{(t,x) \in [0,k] \times H:  \; \sigma(t,x) \in U \}=\sigma^{-1}(U).
\end{equation}
Since $D=D_1 \times D_2 \subset [0,k] \times H$, it follows that
\begin{align}
    g^{-1}(U) & =\{(t,\omega) \in [0,k] \times \Omega:  \; \exists \; t \in D_1 : u(t,\omega) \in D_2\} \\
    & = \{(t, \omega) \in D_1 \times \Omega: \; (t,\omega) \in u^{-1}(D_2)\}.
\end{align}
By assumption $(H_4)$, for every $t \in [0,k]$, $D \in \mathcal{B}([0,t]) \times \mathcal{B}(H)$ and so by the $\mathbb F$-progressive measurability of $u$ we have that, for every $t \in [0,k]$, $u^{-1}(D_2) \in \mathcal{B}([0,t]) \times \mathscr{F}_t$. In conclusion, for every $t \in [0,k]$, $g^{-1}(U)\in \mathcal{B}([0,t]) \times \mathscr{F}_t$, proving that $g$ is $\mathbb{F}$-progressively measurable. Finally, by $(H_6)$, $g \in L^2([0,k] \times \Omega, \mathcal L_2^0)$. Thus, taking \eqref{sel} into account, $g \in Sel_\Sigma^k(u)$ concluding the proof.
\end{proof}
\begin{rem}
    For every $u \in \mathcal{C}^k$ the map $h:[0,k] \times \Omega \to H$ defined as the composition $h(t,\omega)=f(t,u(t,\omega))$ is separately strongly measurable. Indeed, by the classical Carath\'eodory Theorem $u$ is jointly measurable. Moreover, for a.e. $\omega \in \Omega$ the map $h(\cdot,\omega)$ is $(\mathcal{B}({0,k}),\mathcal{B}(H))$-measurable as composition of measurable maps and it is strongly measurable by Theorem \ref{mis} and the separability of the space $H$. On the other hand, for a.e. $t \in [0,k]$ the map $f(t,\cdot)$ is scalarly measurable being weakly continuous, thus by Theorem \ref{Pettis} and the separability of the space $H$, it is strongly measurable and so the map $h(t,\cdot)$ is strongly measurable as composition of strongly measurable maps. 
\end{rem}

\noindent We also recall that, for a strongly continuous semigroup, there exists a constant $M_1 \ge 1$ such that
\begin{equation}
\label{M1}
\sup_{t \in [0,k]} \norma{T(t)}_{\mathcal{L}(H)} \le M_1.
\end{equation} 

\noindent We introduce the multioperator $\mathscr{F}_k : \mathcal{C}^k \multimap \mathcal{C}^k$ as follows:
\begin{equation}\label{eq:multiop}
    \mathscr{F}_k(u) = S \circ Sel_{\Sigma}^k(u), \quad u \in \mathcal{C}^k,
\end{equation}
where $S: L^2_{\mathscr{F}}([0,k], \mathcal L_2^0) \to C([0,k], L^2(\Omega, H))$ is defined as 
\begin{equation*}
    S(\sigma)_t = T(t)u_0 + \int_0^t T(t-s)f(s, u_s) \, ds + \int_0^t T(t-s)\sigma_s \, dW_s, \quad t \in [0,k]
\end{equation*}
whose fixed points are mild solutions of inclusion \eqref{inclusione-approssimata}. 

\noindent Notice that the operator $\mathscr{F}_k$ is well defined since for $\sigma \in \mathcal{C}^k$, $S(\sigma) \in \mathcal{C}^k$ as well, because it is an $\mathbb{F}$-adapted process in view of the measurability of $f$, $\sigma$ and the properties of the Wiener process $W$. Denoting by $C_1 > 0$ the following constant,
\begin{equation}
\label{C1}
    C_1:= 3 M_1^2(\mathbb{E} \norma{u_0}_H^2 + \|C_f\|_{L^1([0,k], \mathbb{R})}+{\rm tr}(Q)\norma{C_\Sigma}_{L^1([0,k], \mathbb{R})}),
\end{equation}
\noindent we consider the set 
\begin{equation}
\mathscr{Q} =  \{ u\in \mathcal{C}^k : \mathbb{E}\norma{u}_H^2 \le R e^{Lt} \; \mbox{a.e.} \; t \in [0,k]\}
\end{equation}
with $L > 0$ and $R > 0$ such that
\begin{equation}
\label{RL}
\begin{array}{l}
    q:= \displaystyle\max_{t \in [0,k]} 3 M_1^2 \displaystyle\int_0^t (C_f(s)+{\rm tr}(Q) C_\Sigma(s)) e^{L(s-t)} \, ds < 1 \\
    \mbox{and}\\
    R \geq \displaystyle\frac{C_1}{1-q}.
    \end{array}
\end{equation}

\noindent The proof of the existence of at least one mild solution of \eqref{inclusione-approssimata} is based on the following fixed point theorem.

\begin{thm}[Theorem 2.2 in \cite{o2000fixed}]
\label{zhou th 1.29}
Let $X$ be a metrizable locally convex linear topological space and let $D$ be a weakly compact, convex subset of $X$. Suppose $\varphi : D \to P(D)$ is a multivalued map with closed and convex values and a weakly sequentially closed graph. Then $\varphi$ has a fixed point, i.e. there exists $x_0 \in D$ such that $x_0 \in \varphi(x_0)$.
\end{thm}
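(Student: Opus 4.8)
The plan is to transfer the problem to the weak topology on $X$ and to deduce the result from the Kakutani--Fan--Glicksberg (Bohnenblust--Karlin) fixed point theorem for upper semicontinuous multimaps on compact convex subsets of a Hausdorff locally convex space. Since $X$ is a metrizable locally convex space it is Hausdorff and its continuous dual separates points, so $(X,\sigma(X,X^{*}))$ is a Hausdorff locally convex space in which $D$ is compact and convex. The only genuine gap between our hypotheses and those of Kakutani--Fan--Glicksberg is that we are handed a \emph{weakly sequentially} closed graph, whereas upper semicontinuity will be drawn from a truly \emph{closed} graph; closing this gap is the heart of the argument.

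First I would upgrade ``weakly sequentially closed'' to ``weakly closed''. Let $(x,y)$ lie in the weak closure of the graph $\Gamma_{\varphi}$ inside $X\times X$. Since $\Gamma_{\varphi}\subseteq D\times D$ and $D\times D$ is weakly compact, $\Gamma_{\varphi}$ is relatively weakly compact, and Theorem \ref{zhou th 1.26} (the Eberlein--\v{S}mulian circle of ideas) guarantees that every point of the weak closure of a relatively weakly compact set is the weak limit of a \emph{sequence} from the set. Hence there are $(x_{n},y_{n})\in\Gamma_{\varphi}$ with $x_{n}\rightharpoonup x$ and $y_{n}\rightharpoonup y$; as $y_{n}\in\varphi(x_{n})$, the assumed weak sequential closedness forces $y\in\varphi(x)$, that is $(x,y)\in\Gamma_{\varphi}$, so $\Gamma_{\varphi}$ is weakly closed. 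This is the step I expect to be the main obstacle: one must argue through sequences rather than nets, because the weak topology on the convex weakly compact set $D$ need not be metrizable in general, so ``sequentially closed'' does not automatically mean ``closed''. It is precisely the angelic behaviour encoded in Theorem \ref{zhou th 1.26} that rescues the implication.

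Next I would produce upper semicontinuity. Each value $\varphi(x)$ is convex and closed, hence weakly closed by Mazur's theorem, and being a weakly closed subset of the weakly compact set $D$ it is weakly compact. Moreover $\varphi$ takes its values in $D$, so its range is relatively weakly compact and $\varphi$ is a compact (in particular locally compact) multimap with compact values and weakly closed graph. The theorem recalled in the preliminaries---a closed, locally compact multimap with compact values is upper semicontinuous---then yields that $\varphi$ is upper semicontinuous from $(D,\sigma(X,X^{*}))$ into $P(D)$.

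Finally I would apply the Kakutani--Fan--Glicksberg fixed point theorem in $(X,\sigma(X,X^{*}))$: $D$ is nonempty, compact and convex, and $\varphi\colon D\to P(D)$ is upper semicontinuous with nonempty closed convex values, so $\varphi$ admits a fixed point $x_{0}\in\varphi(x_{0})\subseteq D$, which is exactly the conclusion. If one prefers not to invoke the multivalued Kakutani--Fan--Glicksberg theorem as a black box, the same end can be reached by the classical finite-dimensional approximation scheme: Theorem \ref{zhou th 1.27} keeps closed convex hulls weakly compact while one approximates $\varphi$ by multimaps with finite-dimensional range, the finite-dimensional problems are solved by Kakutani's theorem, and one passes to the limit using the weak sequential compactness from Theorem \ref{zhou th 1.26} together with the weak sequential closedness of the graph.
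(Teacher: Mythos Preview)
The paper does not prove this theorem: it is quoted verbatim as Theorem~2.2 of \cite{o2000fixed} and used as a black box in the proof of Theorem~\ref{zhou th 8.1}. There is therefore no ``paper's own proof'' to compare your proposal against.

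Your sketch is a standard and essentially correct route to the result: pass to the weak topology, upgrade the sequentially closed graph to a closed one via an Eberlein--\v{S}mulian/angelic argument, deduce upper semicontinuity from closedness plus compactness of the range, and conclude with Kakutani--Fan--Glicksberg. Two small cautions. First, Theorem~\ref{zhou th 1.26} as stated in the paper is for Banach spaces, whereas the hypothesis here is only that $X$ is a metrizable locally convex space; you are implicitly invoking the more general angelic-space machinery (which does apply, since weakly compact subsets of metrizable locally convex spaces are angelic in the weak topology), but this goes beyond the preliminaries actually recorded in the paper. Second, the step ``every point of the weak closure is a weak sequential limit'' is the crux and deserves a precise citation rather than an appeal to Theorem~\ref{zhou th 1.26}, which only gives relative sequential compactness, not the full angelic property needed to pass from net-closure to sequential-closure.
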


\begin{thm}
\label{zhou th 8.1}
Assume that $(H_A),(H_1)-(H_6)$, hold. Then, given $u_0 \in L^2(\Omega,H)$, the inclusion \eqref{inclusione-approssimata} has at least one mild solution on the interval $[0,k]$ and the solution set is a weakly compact set in $C([0,k],L^2(\Omega,H))$.
\end{thm}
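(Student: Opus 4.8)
The plan is to produce the mild solution of \eqref{inclusione-approssimata} as a fixed point of the multioperator $\mathscr{F}_k$ from \eqref{eq:multiop}, applying the fixed point theorem \ref{zhou th 1.29}, and then to deduce the weak compactness of the solution set from the weak sequential closedness of the graph of $\mathscr{F}_k$. First I would record the a priori bound: for $u\in\mathscr{Q}$ and $v\in\mathscr{F}_k(u)$, writing $v$ through \eqref{eqmild} with $\sigma\in Sel_\Sigma^k(u)$, splitting $\mathbb E\|v_t\|_H^2$ into its three contributions, estimating the semigroup by \eqref{M1}, the deterministic term by Cauchy--Schwarz and $(H_3)$, and the stochastic term by the It\^o isometry (Proposition \ref{itoisometry}) and $(H_6)$, one obtains $\mathbb E\|v_t\|_H^2\le C_1+3M_1^2\int_0^t(C_f(s)+\mathrm{tr}(Q)C_\Sigma(s))\,\mathbb E\|u_s\|_H^2\,ds$ with $C_1$ as in \eqref{C1}; inserting $\mathbb E\|u_s\|_H^2\le Re^{Ls}$ and using $q<1$ and $R\ge C_1/(1-q)$ from \eqref{RL} gives $\mathbb E\|v_t\|_H^2\le C_1+qRe^{Lt}\le Re^{Lt}$. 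Since $\mathscr{F}_k$ takes values in $\mathcal{C}^k$, this shows $\mathscr{F}_k(\mathscr{Q}\cap\mathcal{C}^k)\subseteq\mathscr{Q}\cap\mathcal{C}^k$; the same computation, applied to $t^\ast:=\inf\{t:\mathbb E\|u_t\|_H^2>Re^{Lt}\}$, shows by a Gronwall-type bootstrap that every mild solution of \eqref{inclusione-approssimata} also lies in $\mathscr{Q}\cap\mathcal{C}^k$.

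Next I would construct the weakly compact convex set on which Theorem \ref{zhou th 1.29} is used. The key point is that, since $L^2(\Omega,H)$ is reflexive, $\mathscr{F}_k$ maps bounded subsets of $\mathcal{C}^k$ into relatively weakly compact subsets of $C([0,k],L^2(\Omega,H))$. Indeed, let $v^m\in\mathscr{F}_k(u^m)$ with $\{u^m\}$ bounded and $\sigma^m\in Sel_\Sigma^k(u^m)$ the associated selections, which by $(H_6)$ are bounded in $L^2_\mathscr{F}([0,k],\mathcal L_2^0)$; passing to a subsequence, $u^m\rightharpoonup u$ in $L^2([0,k]\times\Omega,H)$ and $\sigma^m\rightharpoonup\sigma$ in $L^2_\mathscr{F}([0,k],\mathcal L_2^0)$. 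For each fixed $t$ the maps $w\mapsto\int_0^t T(t-s)f(s,w_s)\,ds$ and $\tau\mapsto\int_0^t T(t-s)\tau_s\,dW_s$ are bounded and linear (by $(H_2)$, $(H_3)$, \eqref{M1}, the It\^o isometry), hence weakly continuous, so $v^m_t\rightharpoonup w_t$ in $L^2(\Omega,H)$ for every $t$, where $w$ is the continuous function obtained by plugging $u$ and $\sigma$ into \eqref{eqmild}. Since $\{v^m\}$ is bounded in $C([0,k],L^2(\Omega,H))$ and converges weakly pointwise, a dominated convergence argument against the total variation measure of an arbitrary $\phi\in C([0,k],L^2(\Omega,H))^\ast$ (using reflexivity of $L^2(\Omega,H)$) yields $v^m\rightharpoonup w$ in $C([0,k],L^2(\Omega,H))$; by Theorem \ref{zhou th 1.26} the range in question is relatively weakly compact. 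Let $D$ be the closed convex hull of $\mathscr{F}_k(\mathscr{Q}\cap\mathcal{C}^k)$; by Theorem \ref{zhou th 1.27} it is weakly compact and convex, by the a priori bound it is contained in the convex norm-closed (hence weakly closed) set $\mathscr{Q}\cap\mathcal{C}^k$, and $\mathscr{F}_k(D)\subseteq\mathscr{F}_k(\mathscr{Q}\cap\mathcal{C}^k)\subseteq D$.

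I would then check the remaining hypotheses of Theorem \ref{zhou th 1.29} for $\mathscr{F}_k\colon D\to P(D)$. For $u\in D\subseteq\mathcal{C}^k$ the set $Sel_\Sigma^k(u)$ is nonempty by Lemma \ref{zhou lemma 8.1}, convex because $\Sigma(t,\cdot)$ is convex-valued, bounded by $(H_6)$, and weakly closed in the Hilbert space $L^2_\mathscr{F}([0,k],\mathcal L_2^0)$ (if $\sigma^m\in Sel_\Sigma^k(u)$ with $\sigma^m\rightharpoonup\bar\sigma$, Mazur's lemma gives convex combinations converging a.e., and convexity and closedness of the values $\Sigma(t,u_t)$ pass to the limit), hence weakly compact; since the affine map in \eqref{eqmild} is Lipschitz, hence weakly continuous, from $L^2_\mathscr{F}([0,k],\mathcal L_2^0)$ to $C([0,k],L^2(\Omega,H))$, $\mathscr{F}_k(u)$ is weakly compact, in particular closed and convex. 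The crux is the weakly sequentially closed graph: suppose $u^m\rightharpoonup u$ in $D$ and $v^m\in\mathscr{F}_k(u^m)$ with $v^m\rightharpoonup v$; as above, along a subsequence $u^m\rightharpoonup u$ in $L^2([0,k]\times\Omega,H)$, $\sigma^m\rightharpoonup\sigma$ in $L^2_\mathscr{F}([0,k],\mathcal L_2^0)$ with $\sigma^m\in Sel_\Sigma^k(u^m)$, and $v^m_t\rightharpoonup w_t$ for each $t$, so $v=w$ with $w$ built from $u$ and $\sigma$; what remains is $\sigma\in Sel_\Sigma^k(u)$. This is exactly the difficulty flagged in the Introduction, since weak convergence in $L^2(\Omega,H)$ does not pass to a.e. weak convergence in $H$. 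The remedy is to apply Mazur's lemma to the \emph{pair} $(\sigma^m,u^m)$, which converges weakly in $L^2([0,k]\times\Omega,\mathcal L_2^0)\times L^2([0,k]\times\Omega,H)$: this produces convex combinations $\sum_j\lambda^m_j(\sigma^j,u^j)$, with the same coefficients in both components, converging strongly, hence along a further subsequence a.e. pointwise, to $(\sigma,u)$; by convexity of the multimap $\Sigma(t,\cdot)$ one has $\sum_j\lambda^m_j\sigma^j_t(\omega)\in\sum_j\lambda^m_j\Sigma(t,u^j_t(\omega))\subseteq\Sigma(t,\sum_j\lambda^m_ju^j_t(\omega))$, and $(H_5)$ lets one pass to the limit, yielding $\sigma_t(\omega)\in\Sigma(t,u_t(\omega))$ for a.e.\ $(t,\omega)$; progressive measurability of $\sigma$ is automatic since $L^2_\mathscr{F}([0,k],\mathcal L_2^0)$ is a closed subspace. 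Hence $v=w\in\mathscr{F}_k(u)$.

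Theorem \ref{zhou th 1.29} then furnishes $u^\ast\in D\subseteq\mathcal{C}^k$ with $u^\ast\in\mathscr{F}_k(u^\ast)$, i.e.\ a mild solution of \eqref{inclusione-approssimata}. For the last assertion, the solution set is contained in $\mathscr{Q}\cap\mathcal{C}^k$, hence in $D$, by the bootstrap of the first step (any mild solution lies in $\mathscr{F}_k(\mathscr{Q}\cap\mathcal{C}^k)$), it is weakly sequentially closed by the graph property just proved, and, $D$ being weakly compact in the separable Banach space $C([0,k],L^2(\Omega,H))$ and therefore weakly metrizable, it is weakly closed in $D$, hence weakly compact. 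I expect the weakly sequentially closed graph — and within it the coordinated Mazur argument identifying $\sigma$ as a selection of $\Sigma(\cdot,u_\cdot)$ — to be the main obstacle; it is precisely here that the linearity of $f$ and the convexity of $\Sigma$ in the state variable are used, and some care is needed in handling the representation of $C([0,k],L^2(\Omega,H))^\ast$ in the pointwise‑to‑uniform weak convergence step.
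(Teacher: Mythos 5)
Your proposal is correct, and its overall scaffold coincides with the paper's: a priori estimate showing $\mathscr{F}_k$ maps $\mathscr{Q}$ into itself, weak compactness of the image via uniform integrability of $\{f(\cdot,u^m(\cdot))\}$ and $\{\sigma^m\}$ together with reflexivity and Theorem \ref{weakcompL1}, a weakly compact convex invariant set built as the closed convex hull of $\mathscr{F}_k(\mathscr{Q})$, the fixed point theorem (Theorem \ref{zhou th 1.29}), and the solution-set compactness deduced from the graph property. Where you genuinely diverge is the crux of Step 1, the identification of the weak limit $\sigma$ as a selection of $\Sigma(\cdot,u_\cdot)$. The paper applies Mazur's theorem twice — first to $\{u^m_t\}$ to get convex combinations converging a.e.\ in $\Omega$, then to the corresponding combinations $\widetilde\sigma^m$ to get strong $L^2$ convergence — and then must upgrade $(H_5)$, via the local weak compactness coming from $(H_6)$, to upper semicontinuity of $\Sigma(t,\cdot)$ into $({\mathcal L_2^0})_w$, concluding with a Hahn--Banach separation at a fixed $(t_0,\omega_0)$. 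You instead apply Mazur once to the pair $(\sigma^m,u^m)$ in the product $L^2$ space, so that the same convex coefficients act on both components, extract a.e.\ strong convergence of both, use the convexity of $\Sigma(t,\cdot)$ exactly as the paper does, and then invoke $(H_5)$ directly on the strong (hence weak) limits; this dispenses with the u.s.c.\ upgrade and the separation argument and is, if anything, leaner, while using the hypotheses in the same essential way. Two small caveats, neither affecting the substance: your metrizability argument for the final compactness assumes $C([0,k],L^2(\Omega,H))$ separable, which requires separability of $L^2(\Omega,H)$ and is not assumed in the paper — it is safer to argue as the paper does, from $\mathrm{Fix}(\mathscr{F}_k)\subset\mathscr{F}_k(\mathrm{Fix}(\mathscr{F}_k))$, weak sequential closedness, and Theorem \ref{zhou th 1.26}; and your bootstrap at $t^\ast$ yields only a non-strict inequality when $R=C_1/(1-q)$, so take $R$ strictly larger than $C_1/(1-q)$ (which \eqref{RL} permits) to close that argument cleanly.
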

\begin{proof}
We will apply Theorem \ref{zhou th 1.29} to the operator $\mathscr{F}_k$ introduced in \eqref{eq:multiop}. We divide the proof in the following steps.

\noindent {\bf Step 1.} {\it The multioperator  $\mathscr{F}_k$ has a weakly sequentially closed graph.}

\noindent Let $\{u^m\} \subset \mathscr{Q}$ and $\{v^m\} \subset \mathcal{C}^k$ be such that $v^m \in \mathscr{F}_k(u^m)$ for all $m$ and $u^m \rightharpoonup u$, $v^m \rightharpoonup v$ in $\mathcal{C}^k$, we will prove that $v \in \mathscr{F}_k(u)$. 


\noindent The fact that $v^m \in \mathscr{F}_k(u^m)$ means that there exists a sequence $\{\sigma_m\}$, $\sigma_m \in Sel_\Sigma^k (u^m)$, such that for every $t \in [0,k]$,
\begin{equation*}
v^m(t) = T(t)u_0 + \int_0^t T(t-s)f(s, u^m(s)) \, ds + \int_0^t T(t-s)\sigma^m(s) \, dW_s.
\end{equation*}
By the definition of the convergence in $\mathcal{C}^k$ we have that $u^m(t)$ weakly converges to $u(t)$ in $L^2(\Omega,H)$ for every $t \in [0,k]$, thus considering $\varphi \in L^2([0,k], L^2(\Omega, H))$, it follows that 
\begin{equation}
   \langle u^m(t)-u(t) , \varphi(t) \rangle_{L^2(\Omega,H)} \to 0 \quad \forall \; t \in [0,k].
\end{equation}
Moreover, 
\begin{equation}
\begin{array}{lcl}
 |\langle u^m(t)-u(t) , \varphi(t) \rangle_{L^2(\Omega,H)}| & \leq & \left(\mathbb{E}\norma {u^m(t)-u(t)}^2_{H} \mathbb{E}\norma{\varphi(t)}^2_H \right)^{1/2} \\
 & \le & \sqrt{2 R} e^{\frac{Lk}{2}} \left(\mathbb{E}\norma{\varphi(t)}^2_H\right)^{1/2}.  
 \end{array}
\end{equation}
Thus, by the Lebesgue dominated convergence Theorem we have that
\begin{align}
\displaystyle\int_0^k \langle u^m(s)-u(s) , \varphi(s) \rangle_{L^2(\Omega,H)}\, ds \to 0,
\end{align}
so, by the arbitrariness of $\varphi$ we get that $u^m \rightharpoonup u$ in $L^2([0,k], L^2(\Omega, H))$. Let $x' : L^2(\Omega, H) \to \mathbb{R}$ a linear and continuous operator. By the linearity and continuity of the integral, the evolution operator $\{T(t)\}_{t \ge 0}$, and $f(t,\cdot)$, we have that the operator
\begin{equation*}
g \mapsto x' \circ \int_0^t T(t-s)f(s,g(s)) \, ds 
\end{equation*}
\noindent is a linear and continuous operator from $L^2([0,k], L^2(\Omega, H))$ to $\mathbb{R}$ for all $t \in [0,k]$. Then, from the definition of the weak convergence, we have for every $t\in [0,k]$,
\begin{equation*}
x' \circ \int_0^t T(t-s)f(s, u^m(s)) \, ds  \rightarrow x' \circ \int_0^t T(t-s)f(s, u(s)) \, ds.
\end{equation*}
Thus,
\begin{equation*}
\int_0^t T(t-s)f(s, u^m(s)) \, ds  \rightharpoonup \int_0^t T(t-s)f(s, u(s)) \, ds.
\end{equation*}
\noindent Moreover, we observe that, since $u^m \in \mathscr{Q}$, according to $(H_6)$, 
\begin{equation}
  \mathbb{E}\norma {\sigma^m(t)}^2_{\mathcal L_2^0} \le C_\Sigma(t)(1+R e^{Lk})  
\end{equation}
for a.e. $t \in [0,k]$ and every $m$, i.e., $\{\sigma^m\}$ is uniformly integrable in $L^2([0,k] \times \Omega,\mathcal L_2^0)$. Hence, by the reflexivity of the space $\mathcal L_2^0$ and Theorem \ref{weakcompL1}, we have the existence of a subsequence, denoted as the sequence, and a function $\sigma$ such that $\sigma^m \rightharpoonup \sigma$ in $L^2([0,k] \times \Omega,\mathcal L_2^0)$.
Moreover, we have
\begin{equation}
\label{weak-conv}
\int_0^t T(t-s)\sigma^m(s) \, dW_s \rightharpoonup \int_0^t T(t-s)\sigma(s) \, dW_s \quad \forall \; t \in [0,k].
\end{equation}

\noindent We first prove that the operator
\begin{equation*}
h \mapsto \int_0^t T(t-s)h(s) \, dW_s
\end{equation*}

\noindent is linear and continuous from $L^2([0,k] \times \Omega, \mathcal L_2^0)$ to $L^2(\Omega, H)$.
Indeed, the linearity follows from the linearity of the semigroup and the integral operator.
Moreover for any $h^m, h \in L^2([0,k] \times \Omega, \mathcal L_2^0)$ and $h^m \rightarrow h$ as $m \rightarrow \infty$, by \eqref{M1}, we get for each $t \in [0,k]$,
\begin{align*}
& \mathbb{E} \norma*{ \int_0^t T(t-s)[h^m(s) - h(s)] \, dW_s}_{\mathcal L_2^0}^2 \\
&  \le M_1^2 {\rm tr}(Q)\int_0^t \mathbb{E}\norma{ h^m(s) - h(s)}_{\mathcal L_2^0}^2 \, ds \rightarrow 0, \quad \text{as} \quad m \rightarrow \infty.
\end{align*}

\noindent Hence, the operator
\begin{equation*}
h \mapsto \int_0^t T(t-s)h(s) \, dW_s
\end{equation*} 
is continuous. Thus, we have that the operator 
\begin{equation*}
h \mapsto x' \circ \int_0^t T(t-s)h(s) \, dW_s
\end{equation*}

\noindent is a linear and continuous operator from $L^2([0,k] \times \Omega, \mathcal L_2^0)$ to $\mathbb{R}$ for all $t \in [0,k]$. Then, from the definition of the weak convergence, we have for every $t \in [0,k]$,
\begin{equation*}
x' \circ \int_0^t T(t-s)\sigma^m(s) \, dW_s \rightarrow x' \circ \int_0^t T(t-s)\sigma(s) \, dW_s,
\end{equation*}
that is \eqref{weak-conv}.

\noindent From the above arguments, we have 
\begin{align*}
v^m(t) \rightharpoonup & \, T(t)u_0 + \int_0^t T(t-s)f(s, u(s)) \, ds \\
& + \int_0^t T(t-s)\sigma(s) \, dW_s = v^*(t), \quad \forall \, t \in [0,k],
\end{align*}

\noindent which implies, for the uniqueness of the weak limit in $L^2(\Omega, H)$, that $v^*(t)=v(t)$ for all $t\in [0,k]$.
Finally, to obtain the claimed result we have to prove that $\sigma(t) \in \Sigma(t, u(t))$ for a.e. $t \in [0,k]$. 

\noindent First of all notice that by $(H_6)$, the multimap $\Sigma(t, \cdot)$ is locally weakly compact for a.e. $t \in [0,k]$, i.e., for a.e. $t \in [0,k]$ and every $u \in \spL$, there is a neighbourhood $V$ of $u$ such that the restriction of $\Sigma(t, \cdot)$ to $V$ is weakly compact.
Hence, by $(H_5)$ and the locally weak compactness, we easily get that $\Sigma(t, \cdot) : \spL_w \to P({\mathcal L_2^0}_w)$ is u.s.c. for a.e. $t \in [0,k]$. Thus, $\Sigma(t, \cdot) : \spL \to P({\mathcal L_2^0}_w)$ is u.s.c. for a.e. $t \in [0,k]$.

\noindent Now, by Mazur's Theorem, for every $t \in [0,k]$ we obtain a sequence
\begin{equation*}
\widetilde{u}^m_t = \sum_{i=0}^{k_m} \lambda_{m,i}u^{m+i}_t, \quad \lambda_{m,i} \ge 0, \quad \sum_{i=0}^{k_m} \lambda_{m,i} =1
\end{equation*}
\noindent such that $\widetilde{u}^m_t \rightarrow u_t$ in $L^2(\Omega,H)$ and, up to a subsequence, $\widetilde{u}^m_t(\omega) \rightarrow u_t(\omega)$ for a.e. $\omega \in \Omega$. Considering the sequence 
\begin{equation}
  \widetilde{\sigma}^m:= \sum_{i=0}^{k_m} \lambda_{m,i} \sigma^{m+i},
\end{equation}
by the convexity of $\Sigma(t,\cdot)$ we have that
\begin{equation}
  \widetilde{\sigma}^m_t:= \sum_{i=0}^{k_m} \lambda_{m,i} \sigma^{m+i}_t \in \sum_{i=0}^{k_m} \lambda_{m,i} \Sigma(t,u^{m+i}_t) \subset \Sigma(t,\widetilde{u}^m_t).
\end{equation}
Moreover, $\widetilde{\sigma}^m \in L^2([0,k] \times \Omega,\mathcal L_2^0)$ and $\widetilde{\sigma}^m \rightharpoonup \sigma$ in $L^2([0,k] \times \Omega,\mathcal L_2^0)$. Again, by Mazur's Theorem, for every $t \in [0,k]$ we obtain a sequence
\begin{equation*}
\widehat{\sigma}^m = \sum_{j=0}^{j_m} \beta_{m,j}\widetilde{\sigma}^{m+j}, \quad \beta_{m,j} \ge 0, \quad \sum_{j=0}^{j_m} \beta_{m,j} =1,
\end{equation*}
such that $\widehat{\sigma}^m \to \sigma$ in $L^2([0,k] \times \Omega,\mathcal L_2^0)$ and so, up to subsequences, $\widehat{\sigma}^m_{t}(\omega) \to \sigma_t(\omega)$ in $\mathcal L_2^0$ for a.e. $(t,\omega) \in [0,k] \times \Omega$. Let $N_0 \subset [0,k]\times \Omega$ a set with Lebesgue measure zero, such that $\widetilde{u}^m_t(\omega) \to u_t(\omega)$, $\Sigma(t, \cdot) : \spL \to P({\mathcal L_2^0}_w)$ is u.s.c., $\widetilde{\sigma}^m(t) \in \Sigma(t, \widetilde{u}^m(t))$ and $ \widehat{\sigma}^m_t(\omega) \rightarrow \sigma_t(\omega)$ in $\mathcal L_2^0$ for all $(t,\omega) \in [0,k] \times \Omega \setminus N_0$ and $m \in \mathbb{N}$. Fix $(t_0,\omega_0) \notin N_0$ and assume, by contradiction, that $\sigma_{t_0} \notin \Sigma(t_0, u_{t_0})$. Since $\Sigma(t_0, u_{t_0})$ is closed and convex, from the Hahn-Banach Theorem there is a weakly open convex set $V \supset \Sigma(t_0, u_{t_0})$ satisfying $\sigma_{t_0} \notin \overline{V}$. Since $\Sigma(t_0, \cdot) : \spL \to P({\mathcal L_2^0}_w)$ is u.s.c, we can find a neighbourhood $U$ of $u_{t_0}(\omega_0)$ such that $\Sigma(t_0, u) \subset V$ for all $u \in U$.
The convergence $\widetilde{u}^m_{t_0}(\omega_0) \rightarrow u_{t_0}(\omega_0)$ as $m \rightarrow \infty$ implies the existence of $m_0 \in \mathbb{N}$ such that $\widetilde{u}^m_{t_0}(\omega_0) \in U$ for all $m > m_0$. Therefore, $\widetilde{\sigma}^m_{t_0} \in \Sigma(t_0, \widetilde{u}^m_{t_0}) \subset V$ for all $ m > m_0$. Since $V$ is convex, we also have that $\widehat{\sigma}^m_{t_0} \in V$ for all $m > m_0$ and so we get the contradiction $\sigma_{t_0} \in \overline{V}$. Thus, we conclude $\sigma_t \in \Sigma(t, u_t)$ for a.e. $t \in [0,k]$.

\noindent {\bf Step 2.} {\it The multioperator  $\mathscr{F}_k$ is weakly compact.}

\noindent We first prove that $\mathscr{F}_k$ is relatively weakly sequentially compact. To this aim, let $\{u_m\} \subset \mathscr{Q}$ and $\{v_m\} \subset \mathcal{C}_k$ satisfying $ v_m \in \mathscr{F}_k(u_m)$ for all $m \in \mathbb{N}$. By the definition of the multioperator $\mathscr{F}_k$, there exists a sequence $\{\sigma_m \}$, $\sigma_m \in Sel_{\Sigma}^k(u_m)$, such that for all $t \in [0,k]$,
\begin{equation*}
v_m(t) = T(t)u_0 + \int_0^t T(t-s)f(s, u_m(s)) \, ds + \int_0^t T(t-s)\sigma_m(s) \, dW_s.
\end{equation*}

\noindent Further, reasoning as in  Step 1, we have that there exists a subsequence, denoted as the sequence, and a function $\sigma$ such that $\sigma_m \rightharpoonup \sigma$ in $L^2([0,k] \times \Omega,\mathcal L_2^0)$. Since $u_m \in \mathscr{Q}$, according to $(H_3)$, 
\begin{equation}
  \mathbb{E}\norma {f(t,u_m(t))}^2_{H} \le C_f(t)(1+R e^{Lk})  
\end{equation}
for a.e. $t \in [0,k]$ and every $m$, i.e., $\{f(\cdot,u_m(\cdot))\}$ is uniformly integrable in $L^1([0,k],L^2(\Omega,H))$. Hence, by the reflexivity of the space $L^2(\Omega,H)$ and Theorem \ref{weakcompL1}, we have the existence of a subsequence, denoted as the sequence, and a function $\overline{f}$ such that $f(\cdot,u_m(\cdot)) \rightharpoonup \overline{f}$ in $L^2([0,k],L^2(\Omega,H))$. Again, reasoning as in Step 1 we have that
\begin{equation}
v_m(t) \rightharpoonup \overline{v}(t) = T(t)u_0 + \int_0^t T(t-s)\overline{f}(s) \, ds + \int_0^t T(t-s)\sigma(s) \, dW_s, \quad \forall t \in [0,k].
\end{equation}

\noindent Furthermore, by $(H_A), (H_3)$ and $(H_6)$, we have
\begin{align*}
&\mathbb{E}\norma{v_m(t)}_H^2\le 3\mathbb{E}\norma{T(t)u_0}_H^2 +3\mathbb{E} \norma*{\int_0^t T(t-s)f(s, u_m(s)) \, ds}_H^2 \\
& \quad  +3\mathbb{E} \norma*{\int_0^t T(t-s)\sigma_m(s) \, dW_s}_H^2 \\
&\le 3M_1^2\mathbb{E}\norma{u_0}_H^2 +3M_1^2\int_0^t \mathbb{E}\norma*{f(s,u_m(s))}_H^2 \, ds +3M_1^2{\rm tr}(Q)\int_0^t \mathbb{E}\norma{\sigma_m(s)}^2_{\mathcal L_2^0} \,ds \\
&\le 3 M_1^2\mathbb{E}\norma{u_0}_H^2 +3 M_1^2 k (\norma{C_f}_{L^1([0,k], \mathbb{R})} +{\rm tr}(Q)\norma{C_\Sigma}_{L^1([0,k], \mathbb{R})}) (1+e^{Lk}) \le N,
\end{align*}
for some constant $N > 0$. Recalling the definition of weak convergence in $C([0,k], L^2(\Omega, H))$, we have that $ v_m \rightharpoonup \overline{v}$ in $\mathcal{C}^k$. Thus, $\mathscr{F}_k(\mathscr{Q})$ is relatively weakly sequentially compact, hence relatively weakly compact by Theorem \ref{zhou th 1.26}.

\noindent {\bf Step 3.} {\it The multioperator  $\mathscr{F}_k$ has weakly compact and convex values.}

\noindent Fix $u \in \mathscr{Q}$, since $\Sigma$ is convex valued, from the linearity of the integral and the operator $\{T(t)\}_{t \ge 0}$, it follows that the set $\mathscr{F}_k(u)$ is convex. The weak compactness of $\mathscr{F}_k(u)$ follows by Steps 1 and 2.

\noindent {\bf Step 4.} {\it The multioperator $\mathscr{F}_k$ maps the ball $\mathscr{Q}$ into itself.}

\noindent Let $u \in \mathscr{Q}$, and $z \in \mathscr{F}_k(u)$. Then there exists a selection $\sigma \in \mbox{Sel}_\Sigma^k$ such that
\begin{equation*}
z(t) = T(t)u_0 + \int_0^t T(t-s)f(s, u(s)) \, ds + \int_0^t T(t-s)\sigma(s) \, dW_s, \, \forall t\in [0,k].
\end{equation*}
By $(H_A), (H_3)$ and $(H_6)$, we have
\begin{align*}
& \mathbb{E}\norma{z(t)}_H^2 \le 3\mathbb{E}\norma{T(t)u_0}_H^2 +3\mathbb{E} \norma*{\int_0^t T(t-s)f(s, u(s)) \, ds}_H^2 \\
& \quad +3\mathbb{E} \norma*{\int_0^t T(t-s)\sigma(s) \, dW_s}_H^2 \\
&\le 3M_1^2\mathbb{E}\norma{u_0}_H^2 +3M_1^2\int_0^t \mathbb{E}\norma*{f(s,u(s))}_H^2 \, ds +3M_1^2{\rm tr}(Q)\int_0^t \mathbb{E}\norma{\sigma(s)}^2_{\mathcal L_2^0} \,ds \\
&\le 3 M_1^2\mathbb{E} \left(\norma{u_0}_H^2 + \|C_f\|_{L^1([0,k], \mathbb{R})}+{\rm tr}(Q)\norma{C_\Sigma}_{L^1([0,k], \mathbb{R})}\right) \\
& \quad + 3 M_1^2 \int_0^t (C_f(s) + {\rm tr}(Q)C_\Sigma(s)) R e^{Ls} \,ds,
\end{align*}
for all $t \in [0,k]$, where $M_1$ is defined in \eqref{M1}. From the choices of $L$ and $R$ in \eqref{RL} we have that
\begin{align*}
 \mathbb{E}\norma{z(t)}_H^2 \leq C_1 + R e^{Lt} q \leq Re^{Lt}, 
\end{align*}
where $C_1$ is defined in \eqref{C1}. Thus $z \in \mathscr{Q}$. 

\noindent Since $\mathscr{F}_k^n$ is a weakly compact operator, the set $V = \overline{\mathscr{F}_k(\mathscr{Q})}^w$ is weakly compact. Let now $\widetilde{V} = \overline{co}(V)$, where $\overline{co}(V)$ denotes the closed convex hull of $V$. By Theorem \ref{zhou th 1.27}, $\widetilde{V}$ is a weakly compact set. Moreover, from the fact that $\mathscr{F}_k(\mathscr{Q}) \subset \mathscr{Q}$ and $\mathscr{Q}$ is a convex closed set we have that $\widetilde{V} \subset \mathscr{Q}$ and hence
\begin{equation*}
\mathscr{F}_k(\widetilde{V})= \mathscr{F}_k(\overline{co}(\mathscr{F}_k(\mathscr{Q}))) \subseteq \mathscr{F}_k(\mathscr{Q}) \subseteq \overline{\mathscr{F}_k(\mathscr{Q})}^w = V \subset \widetilde{V}.
\end{equation*}
Hence there exists a fixed point $u \in \mathscr{F}_k(u)$, thus a mild solution of \eqref{inclusione-approssimata}.

\noindent {\bf Step 5.} {\it The set of solutions to problem \eqref{inclusione-approssimata} is weakly compact in $\mathcal{C}^k$}.

\noindent Being $\mathscr{F}_k$ weakly closed, the fixed point set $\mbox{Fix}(\mathscr{F}_k)$ is weakly closed. Moreover, $\mbox{Fix}(\mathscr{F}_k)\subset \mathscr{Q}$ hence it is a bounded set. Finally, since $\mathscr{F}_k$ is weakly compact and, by definition, 
$$
\mbox{Fix}(\mathscr{F}_k)\subset\mathscr{F}_k(\mbox{Fix}(\mathscr{F}_k)),
$$
it follows that $\mbox{Fix}(\mathscr{F}_k)$ is a weakly compact set.
\end{proof}

\section{Proof of Theorem \ref{th-main}} \label{sec:proof}

\begin{proof}
According to Theorem \ref{zhou th 8.1}, for every $k \in \mathbb{N}_+$ there exists a solution $u_k \in \mathcal{C}^k$ to the problem \eqref{inclusione-approssimata} with $ \|u_k(t)\| \leq n $ and $ t \in [0,k].$ Define
$$
\widetilde{u}_{k}(t)=
\begin{cases}
u_{k}(t)\ \ \text{for}\ \ t\in [0,k],\\
u_{k}(k)\ \ \text{for}\ \ t\geq k.
\end{cases}
$$
Consider the restriction of the sequence of mild solutions $\{\widetilde{u}_k\}$ to the interval $[0,1]$, namely $\{\widetilde{u}_k^1\}:=\{\widetilde{u}_k \big{|}_{[0,1]}\}$. Since, again by Theorem \ref{zhou th 8.1}, the set of mild solutions of problem \eqref{inclusione-approssimata} is weakly compact in $C([0,1],L^2(\Omega,H))$, there exists a subsequence, still denoted as the sequence, weakly converging to a function $\psi^1\in C([0,1],L^2(\Omega,H))$ mild solution of the problem \eqref{inclusione-approssimata} in $[0,1]$. Moreover, since $\mathcal{C}$ is a closed subspace of $C([0,+\infty),L^2(\Omega,H))$ it follows that, for every $k \geq 1$, $\mathcal{C}^k$ is a closed subspace of $C([0,k],L^2(\Omega,H))$ and so $\psi^1 \in \mathcal{C}^1$. Now, let us consider the sequence $\{\widetilde{u}_k^2\}_{k \ge 2}:=\{\widetilde{u}_k \big{|}_{[0,2]}\}_{k \ge 2}$. Again, we get that there exists a subsequence, still denoted as the sequence, weakly converging to a function $\psi^2 \in \mathcal{C}^2$ mild solution for the problem \eqref{inclusione-approssimata} in $[0,2]$. By construction it follows that
\begin{equation*}
\psi^2 \big{|}_{[0,1]}=\psi^1\ .
\end{equation*}
By iterating this process we obtain, for every $k\in \mathbb{N}_+$,
a mild solution $\psi^k:[0,k]\to L^2(\Omega,H)$ for problem \eqref{inclusione-approssimata} such that for every integer $k\ge 2$, we have
\begin{equation}
\label{past} \psi^k \big{|}_{[0,k-1]}=\psi^{k-1} \ .
\end{equation}
Hence by the continuity of the maps $\psi^k$ for any $k \in \mathbb{N}_+$ and by \eqref{past} we have that the map $\psi:[0,+\infty) \to L^2(\Omega,H)$ defined as
$$
\psi(t)=\left \{ \begin{array}{ll}
                     \psi^1(t) & t \in [0,1] \\
                     \psi^2(t) & t \in ]1,2] \\
                     \dots & \dots \\
                     \psi^k(t) & t \in ]k-1,k] \\
                     \dots & \dots
                     \end{array}
                     \right.
$$
is a mild solution to the problem \eqref{inclusione} on $[0,+\infty)$ with $\psi \in \mathcal{C}$. Indeed, by definition of mild solution (Definition \ref{mild}), for every $k \in \mathbb{N}_+$ there exists a map $\sigma^k \in Sel_{\Sigma}^k(\psi^k)$. Thus, the map $\sigma \in L^2([0,+\infty),\mathcal L_2^0)$ defined as
$$
\sigma(t)=\left \{ \begin{array}{ll}
                     \sigma^1(t) & t \in [0,1] \\
                     \sigma^2(t) & t \in ]1,2] \\
                     \dots & \dots \\
                     \sigma^k(t) & t \in ]k-1,k] \\
                     \dots & \dots
                     \end{array}
                     \right.
$$
is a selection of $\Sigma$ corresponding to $\psi$. In fact, for every $t \in [0,+\infty)$ there exists $k \in \mathbb{N}_+$ such that $t \in [k-1,k]$ and so $\sigma(t)=\sigma^k(t) \in \Sigma(t,\psi^k(t))=\Sigma(t,\psi(t))$. Moreover, by construction the maps $\psi$ and $\sigma$ satisfy the equation \eqref{eqmild}.
\end{proof}

\section{Existence of nonnegative solutions} \label{sec:nonneg}

\noindent In this section we study the sign of the solution of the problem \eqref{inclusione} under the hypotheses $(H_A)$, $(H_1)-(H_6)$ and requiring additionally that $A$ generates a positive contraction semigroup and the following assumption:
\begin{itemize} 
    \item[($H_7$)] for every $t \in [0,+\infty)$ and $h \in \spL$,
    \begin{equation*}
        - \braket{h^-, f(t, h)} + \norma{ \mathbbm{1}_{\{h \le 0\}}\Sigma(t,h)}^2_{\mathcal L_2^0} \le \norma{h^-}^2_H
    \end{equation*}
    with
\begin{equation*}
    h^- = 
    \begin{cases}
    -h & h < 0 \\
     0 & h \ge 0,
    \end{cases}
\end{equation*}
\end{itemize}
where $\mathbbm{1}_A$ is the characteristic function of a set $A$ and the ordering in $H$ is induced by a cone $\mathcal{K} \subset H$, i.e. $h \ge 0$ if and only if $h \in \mathcal{K}$ and $h < 0$ means $h \in \mathcal{K}^c$, where $\mathcal{K}^c$ denotes the complementary set of $\mathcal{K}$. 

\begin{rem}
To prove the existence of a nonnegative mild solution we use the following notation for $t\in [0,+\infty)$ and $w\in \Omega$ 
\begin{equation}
\label{u-}
      u_t^-(w)=
       \begin{cases}
       -u_t(w) \quad & \text{if} \,\, u_t(w) < 0 \\
       0 \quad \quad & \text{if} \,\, u_t(w) \ge 0
       \end{cases}
\end{equation}
   where since $u_t$ is a function with values in $L^2(\Omega, H)$, for each $t \in [0,+\infty)$ and $w \in \Omega$ an equivalent class is specified. So, if we write $u_t(w) \ge 0$, or $u_t \ge 0$ we mean that in the corresponding class there is a function belonging to $\mathcal{K}$.
\end{rem}

\noindent We state here the main result of the section.
\begin{thm}
\label{th marinelli}
Under the assumptions $(H_A)$, $(H_1) - (H_7)$, if in addition $A$ is the generator of a positive contraction semigroup and $u_0 \in L^2(\Omega,H)$ is nonnegative, there exists at least one nonnegative mild solution $u \in \mathcal{C}$ of \eqref{inclusione}.
\end{thm}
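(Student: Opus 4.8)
The plan is to transfer the sign analysis to a regularized problem for which Itô's formula is available, by a Yosida approximation of $A$, and then to recover a solution of \eqref{inclusione} through a weak limit, reusing the weak‑compactness machinery of Sections~\ref{sec:approx}--\ref{sec:proof}. I would set $J_n=(nI-A)^{-1}$ and $A_n=nAJ_n=n^2J_n-nI\in\mathcal L(H)$. Since $A$ generates a contraction semigroup, Proposition~\ref{propertiesJlambda}(iv) gives $\norma{J_n}_{\mathcal L(H)}\le 1/n$, whence $\norma{e^{tA_n}}_{\mathcal L(H)}\le e^{-nt}e^{tn^2\norma{J_n}_{\mathcal L(H)}}\le 1$; since moreover $T(\cdot)$ is positive, $J_n=\int_0^{+\infty}e^{-nt}T(t)\,dt$ is a positive operator, hence each $e^{tA_n}=e^{-nt}\sum_{j\ge0}(tn^2)^jJ_n^j/j!$ is positive, and $e^{tA_n}x\to T(t)x$ in $H$ for every $x$, uniformly on compacts. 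For each $n$, the inclusion obtained from \eqref{inclusione} by replacing $A$ with $A_n$ still satisfies $(H_A),(H_1)-(H_6)$, so Theorem~\ref{th-main} yields a mild solution $u^n\in\mathcal C$ with a selection $\sigma^n_t\in\Sigma(t,u^n_t)$, and Proposition~\ref{boundsol} gives $\sup_{t\ge0}\mathbb E\norma{u^n_t}_H^2\le\overline C$ with $\overline C$ \emph{independent of $n$}, because the $e^{tA_n}$ are uniformly contractive. As $A_n$ is bounded, $u^n_t\in D(A_n)=H$ for every $t$, so, by the computation in Remark~\ref{mild-strong}, $u^n$ is in fact a strong solution of its inclusion.

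The second step is to show $u^n_t\ge0$ a.s.\ for every $t$. Fixing $T>0$, I would apply the Itô formula (Proposition~\ref{itoformula}) to $u^n$ on $[0,T]$ with $\phi(x)=\norma{x^-}_H^2$ --- rigorously, through a smooth approximation of $\phi$ and a passage to the limit, as in \cite{marinelli}. Using $\phi'(x)=-2x^-$ and that the trace correction equals $\norma{\mathbbm{1}_{\{x\le0\}}\sigma}_{\mathcal L_2^0}^2$, taking expectations (the stochastic term being a martingale) and using $u_0\ge0$ one gets
\begin{equation*}
\mathbb E\norma{(u^n_t)^-}_H^2=-2\int_0^t\mathbb E\braket{(u^n_s)^-,A_nu^n_s}\,ds+\int_0^t\mathbb E\Big[-2\braket{(u^n_s)^-,f(s,u^n_s)}+\norma{\mathbbm{1}_{\{u^n_s\le0\}}\sigma^n_s}_{\mathcal L_2^0}^2\Big]\,ds.
\end{equation*}
The first integral is $\le0$: writing $A_nx=n^2J_nx-nx$ and using $\braket{x,x^-}=-\norma{x^-}_H^2$, the positivity of $J_n$ and $\braket{J_nx,x^-}\ge-\norma{J_n}_{\mathcal L(H)}\norma{x^-}_H^2$, one obtains $\braket{A_nx,x^-}\ge0$ for all $x\in H$. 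The second integrand is controlled by $(H_7)$: since $\sigma^n_s\in\Sigma(s,u^n_s)$, we have $\norma{\mathbbm{1}_{\{u^n_s\le0\}}\sigma^n_s}_{\mathcal L_2^0}^2\le\norma{\mathbbm{1}_{\{u^n_s\le0\}}\Sigma(s,u^n_s)}_{\mathcal L_2^0}^2\le\norma{(u^n_s)^-}_H^2+\braket{(u^n_s)^-,f(s,u^n_s)}$, so the integrand is $\le 2\norma{(u^n_s)^-}_H^2$. Hence $\mathbb E\norma{(u^n_t)^-}_H^2\le2\int_0^t\mathbb E\norma{(u^n_s)^-}_H^2\,ds$ on $[0,T]$, and Gronwall's lemma forces $\mathbb E\norma{(u^n_t)^-}_H^2=0$; since $T$ is arbitrary, $u^n_t\ge0$ a.s.\ for all $t\ge0$.

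The third step is the passage to the limit $n\to\infty$. Fix $k\in\mathbb N$. By the uniform bound and $(H_6)$, $\{\sigma^n\}$ is uniformly integrable in $L^2([0,k]\times\Omega,\mathcal L_2^0)$, so along a subsequence $\sigma^n\rightharpoonup\sigma^*$ there (Theorem~\ref{weakcompL1} and reflexivity); along a further subsequence $u^n\rightharpoonup u^*$ in the weak sense of $\mathcal C^k$ (i.e.\ $u^n(t)\rightharpoonup u^*(t)$ in $L^2(\Omega,H)$ for each $t$, argued as in Steps~1--2 of the proof of Theorem~\ref{zhou th 8.1}), hence $u^n\rightharpoonup u^*$ in $L^2([0,k],L^2(\Omega,H))$ and, by linearity and continuity of $f(t,\cdot)$, $f(\cdot,u^n)\rightharpoonup f(\cdot,u^*)$. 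Passing to the limit in $u^n_t=e^{tA_n}u_0+\int_0^t e^{(t-s)A_n}f(s,u^n_s)\,ds+\int_0^t e^{(t-s)A_n}\sigma^n_s\,dW_s$ exactly as in Step~1 of the proof of Theorem~\ref{zhou th 8.1}, with the additional ingredients $e^{(t-s)A_n}x\to T(t-s)x$ (for the stochastic convolution, via Theorem~\ref{convergenceHilbertSchmidt}) and $\norma{e^{(t-s)A_n}}_{\mathcal L(H)}\le1$, shows that $u^*$ satisfies \eqref{eqmild} on $[0,k]$; the Mazur/convexity argument of that same step (based on $(H_4),(H_5),(H_6)$) gives $\sigma^*_s\in\Sigma(s,u^*_s)$ a.e.; and since $\{v\in L^2(\Omega,H):v\ge0\}$ is closed and convex, hence weakly closed, $u^n_t\ge0$ together with $u^n_t\rightharpoonup u^*_t$ yields $u^*_t\ge0$ for all $t\in[0,k]$. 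Running this on $[0,1]$, then on $[0,2]$, and so on, and taking a diagonal subsequence exactly as in the proof of Theorem~\ref{th-main}, I would glue the resulting nonnegative mild solutions into a nonnegative $u\in\mathcal C$ solving \eqref{inclusione} on $[0,+\infty)$ (the concatenated selection being admissible because, as there, the stochastic convolutions agree on the overlaps).

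The hard part will be the second step: justifying the Itô formula for the non‑$C^2$ map $x\mapsto\norma{x^-}_H^2$ (which relies on the Hilbert‑lattice structure underlying the order cone $\mathcal K$), and, above all, establishing the sign estimate $\braket{A_nx,x^-}\ge0$. This is the point where the \emph{positivity} of the semigroup --- inherited by the Yosida approximants, which is also why the contraction hypothesis is needed here --- must be combined carefully with the lattice structure, and the bookkeeping must be arranged so that the surviving terms reproduce precisely the left‑hand side of $(H_7)$. Everything else is a combination of Gronwall's lemma with the weak‑topology arguments already developed in Sections~\ref{sec:approx}--\ref{sec:proof}.
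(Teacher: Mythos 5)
Your proposal is correct in outline but follows a genuinely different route from the paper. You regularize the \emph{generator}: replace $A$ by its Yosida approximants $A_n=nAJ_n$, solve the full nonlinear inclusion for each $n$ via Theorem~\ref{th-main}, prove nonnegativity of $u^n$ by It\^o's formula for $\norma{x^-}_H^2$ (using positivity of $J_n$ to get $\braket{A_nx,x^-}\ge0$ and $(H_7)$ evaluated at $u^n_s$ itself), and then pass to a weak limit in $n$, re-running the Mazur/upper-semicontinuity argument of Theorem~\ref{zhou th 8.1} to identify the selection, and the diagonal gluing of Theorem~\ref{th-main}. The paper instead keeps $A$ fixed, takes a mild solution $\overline u$ already furnished by Theorem~\ref{th-main} with selection $\overline\sigma$, \emph{freezes} the nonlinearities along it and regularizes the data, $u_0^\lambda=\lambda J^\lambda u_0$, $f^\lambda_s=\lambda J^\lambda f(s,\overline u_s)$, $\sigma^\lambda_s=\lambda J^\lambda\overline\sigma_s$, so that the approximating problem \eqref{inclusionelambda} is a linear equation with $D(A)$-valued data whose mild solution is strong; It\^o, the sign estimate $\braket{Au,u^-}\ge0$, $(H_7)$ and Gronwall give $\mathbb E\norma{(u^\lambda_t)^-}_H^2\le\widetilde C\,\mathbb E\norma{(u_0^\lambda)^-}_H^2=0$, and then $u^\lambda_t\to\overline u_t$ \emph{strongly} in $L^2(\Omega,H)$ by $\lambda J^\lambda\to I$, dominated convergence and the It\^o isometry. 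What each buys: the paper's scheme avoids any second fixed-point or weak-compactness step (the convergence $u^\lambda\to\overline u$ is strong, so no identification of a limiting selection is needed) and in fact shows that the \emph{given} solution is nonnegative, which is what makes Theorem~\ref{th marinelli2} possible; your scheme applies $(H_7)$ at the approximate solution itself (arguably a cleaner match with the hypothesis, whereas the paper mixes $u^\lambda$ with data frozen at $\overline u$), but pays for it with a more delicate limit passage in which the semigroup varies with $n$: since $\sigma^n$ and $f(\cdot,u^n)$ converge only weakly, passing $e^{(t-s)A_n}\to T(t-s)$ through the convolutions requires testing against functionals and using strong convergence of the \emph{adjoint} Yosida semigroups $e^{rA_n^*}\to T(r)^*$ (available here because $A_n^*$ is the Yosida approximant of $A^*$ on a Hilbert space), a point you should make explicit, as strong operator convergence alone does not commute with weak convergence of the integrands. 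The remaining delicate step you flag, It\^o's formula for the non-smooth map $x\mapsto\norma{x^-}_H^2$ and the lattice identities $\braket{x^+,x^-}=0$, is treated at exactly the same (formal) level in the paper, so it is not a gap relative to the paper's standard.
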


\begin{proof}

\noindent Let $\overline{u}$ be a mild solution of \eqref{inclusione} which exists in view of Theorem \ref{th-main}, namely there exists a map $\overline{\sigma} \in L^2([0,+\infty),\mathcal{L}^0_2)$, $\overline{\sigma}_t \in \Sigma(t,\overline{u}_t)$ for a.e. $t \in [0,+\infty)$, such that
\begin{equation*}
\overline{u}_t = T(t)u_0 + \int_0^t T(t-s)f(s, \overline{u}_s)\,ds + \int_0^t T(t-s)\overline{\sigma}_s\,dW_s, \,\, \text{for} \,\, t \in [0,+\infty).
\end{equation*}
Recalling that, for $\lambda > 0$, $J^\lambda \in \mathcal{L}(H, D(A))$ (see (iii) in Proposition \ref{propertiesJlambda}) we consider the following approximations
    \begin{align}
    u^\lambda_0 & =\lambda J^\lambda u_0 \in \mathcal{M}(\Omega,D(A)) \\
    f^\lambda_s & =\lambda J^\lambda f(s,\overline{u}_s) \in \mathcal{M}(\Omega, L^2([0,+\infty),D(A))) \\
    \sigma^\lambda_s & =\lambda J^\lambda \overline{\sigma}_s \in \lambda J^\lambda \Sigma(s,\overline{u}_s) \in \mathcal{M}(\Omega, L^2([0,+\infty),\mathcal{L}_2(Q^{\frac{1}{2}}K,D(A)))
    \end{align}
    where $\mathcal{M}(\Omega,E)$ denotes the space of measurable maps from $\Omega$ to a Banach space $E$ and $u^\lambda_0$ is $\mathcal{F}_0$-measurable. Notice that for $z \in K$
    \begin{equation}
    \sigma^\lambda_s(z) = \sum_{k=1}^{+ \infty} \lambda J^\lambda b_k \braket{z, e_k}_K
    \end{equation}
    where $\{e_k\}$ is an orthonormal basis in $K$ and $\{b_k\}$ is a sequence of elements in $H$. Denote by $\{u^\lambda_t\}$ the mild solution of the problem 
\begin{equation}
\label{inclusionelambda}
\begin{cases}
du_t = [Au_t + f^\lambda_t]dt + \sigma^\lambda_t dW_t, \quad t > 0 \\
u(0)=u^\lambda_0,
\end{cases}
\end{equation}
see e.g. \cite[Theorem 7.2]{DaPrato}. Namely, $\{u^\lambda_t\}$ is defined as
    \begin{equation*}
u^\lambda_t = T(t)u^\lambda_0 + \int_0^t T(t-s)f^\lambda_s\,ds + \int_0^t T(t-s)\sigma^\lambda_s\,dW_s, \,\, \text{for} \,\, t \in [0,+\infty).
\end{equation*}
Notice that, since $u^\lambda_t \in D(A)$ for every $t \in [0,+\infty)$, it follows that it is also a strong solution of \eqref{inclusionelambda}, see Remark \ref{mild-strong}.

\noindent Now, we apply the It\^o  formula (see Proposition \ref{itoformula}) to the process $g((u^\lambda_t)^-)$, where the map $g:H \to \mathbb{R}$ is defined as $g(x)=\frac{1}{2}\|x\|^2_H,\; x \in H$ and for $t \in [0,+\infty)$, $(u^\lambda_t)^-$ is defined in \eqref{u-}, obtaining
\begin{equation}
\label{ito-ulambda}
\begin{aligned}
    \frac{1}{2}\norma{(u^\lambda_t)^-}^2_\spL &= \frac{1}{2}\norma{(u^\lambda_0)^-}^2_\spL - \int_0^t \braket{A u^\lambda_s, (u^\lambda_s)^-} \, ds - \int_0^t \braket{f^\lambda_s, (u^\lambda_s)^-} \, ds \\ 
    & \quad \quad + \frac{1}{2}\int_0^t \|\sigma^\lambda_s \mathbbm{1}_{\{u^\lambda \le 0\}}\|^2_{\mathcal L_2^0}  \, ds - \int_0^t (u^\lambda_s)^- \sigma^\lambda_s \, dW_s.
\end{aligned}
\end{equation}
Recalling that $\braket{Au,u} \leq 0$ for every $u \in D(A)$ (see (vi) in Proposition \ref{propertiesJlambda}) and the definition of positive semigroup (see Definition \ref{positive semigroup}) we have that
\begin{align}
    \braket{Au,u^-}&=\braket{Au^+,u^-}-\braket{Au^-,u^-} \geq \braket{\displaystyle\lim_{t\to 0^+} \frac{T(t)u^+-u^+}{t},u^-} \\
    &=\displaystyle\lim_{t\to 0^+} \frac{1}{t}\left(\braket{T(t)u^+,u^-}-\braket{u^+,u^-}\right) \geq 0, \quad \mbox{for every} \quad u \in D(A).
\end{align}
Hence, the second term on the left-hand side in \eqref{ito-ulambda} is negative. Moreover, thanks to assumption $(H_7)$, the sum of the third and fourth terms on the right-hand side of \eqref{ito-ulambda} can be estimated by 
\begin{equation*}
    \int_0^t \norma{(u^\lambda_s)^-}^2_\spL \, ds, \quad t \in [0,+\infty).
\end{equation*}
We are thus left with 
\begin{align}
\label{eq 6 Marinelli}
    \norma{(u^\lambda_t)^-}^2_\spL \le & \norma{(u^\lambda_0)^-}^2_\spL + 2 \int_0^t \norma{(u^\lambda_s)^-}^2_\spL \, ds \\
    & - 2 \int_0^t (u^\lambda_s)^- \sigma^\lambda_s \, dW_s, \quad t \in [0,+\infty).
\end{align}

\noindent Let $(T_n)$ be a localizing sequence for the continuous local martingale on the right-hand side of\eqref{eq 6 Marinelli}.
\newline
Introducing the stopped process $u^\lambda(n) := u^{\lambda,T_n}$, one has
\begin{align*}
    \norma{(u^\lambda_t(n))^-}^2_\spL &\le \norma{(u^\lambda_0)^-}^2_\spL + 2 \int_0^{t \wedge T_n} \norma{(u^\lambda_s)^-}^2_\spL \, ds - 2 \int_0^{t \wedge T_n} (u^\lambda_s)^- \, \sigma^\lambda_s \, dW_s \\
    &\le \norma{(u^\lambda_0)^-}^2_\spL + 2 \int_0^{t} \norma{(u^\lambda_s(n))^-}^2_\spL \, ds - 2 \int_0^{t \wedge T_n} (u^\lambda_s)^- \, \sigma^\lambda_s \, dW_s,
\end{align*}
for every $t \in [0,+\infty)$.
Recalling that $\mathbb{E} \norma{u_0}^2_H < \infty$ by assumption, taking expectation on both side, applying Tonelli's theorem  and the zero mean property of the stochastic integral, we get
\begin{equation*}
    \mathbb{E}\norma{(u^\lambda_t(n))^-}^2_H \le \mathbb{E}\norma{(u^\lambda_0)^-}^2_H + 2 \int_0^{t} \mathbb{E}\norma{(u^\lambda_s(n))^-}^2_H \, ds
\end{equation*}
hence also, by Gronwall's inequality,
\begin{equation*}
    \mathbb{E}\norma{(u^\lambda_t(n))^-}^2_H \le \widetilde{C} \, \mathbb{E}\norma{(u^\lambda_0)^-}^2_H \quad \forall \; t \in [0,+\infty),
\end{equation*}
for a suitable constant $\widetilde{C} > 0$. Passing to the limit as $n \to \infty$, Fatou's lemma yields 
\begin{equation}
\label{disuguaglianza con u0}
    \mathbb{E}\norma{(u^\lambda_t)^-}^2_H \le \widetilde{C} \, \mathbb{E}\norma{(u^\lambda_0)^-}^2_H \quad \forall \,t \in \; [0,+\infty).
\end{equation}
Now, fix $t \in [0,+\infty)$, recalling that $\lambda J^\lambda \to I$ in $\mathcal{L}(H,H)$ with respect to the strong operator topology (see (v) in Proposition \ref{propertiesJlambda}) we have that
 \begin{align}
    & u^\lambda_0 \to u_0 \quad \mathbf{P}-\mbox{a.s. in} \; H \\
    & f^\lambda_t \to f(t,\overline{u}_t) \quad \mathbf{P}-\mbox{a.s. in} \; H \\
    & \sigma^\lambda_t \to \overline{\sigma}_t \quad \mathbf{P}-\mbox{a.s. in} \; \mathcal L_2^0,
    \end{align}
where the last convergence is due to Theorem \ref{convergenceHilbertSchmidt}. Moreover, for every $t \in [0,+\infty)$, exploiting the It\^o isometry (see Proposition \ref{itoisometry}), we have
\begin{equation}
\label{ulambdaconvergence}
\begin{split}
    \mathbb{E}\norma{u^\lambda_t-\overline{u}_t}^2_H \leq & 3 M^2 \norma{u^\lambda_0-u_0}^2_H + 3 M^2 \displaystyle\int_0^t \mathbb{E}\norma{f^\lambda_s-f(s,\overline{u}_s)}^2_H \,ds \\
    & + 3 M^2 {\rm tr}(Q)\displaystyle\int_0^t \mathbb{E}\norma{\sigma^\lambda_s-\overline{\sigma}(s)}^2_{\mathcal L_2^0} \,ds.
    \end{split}
\end{equation}
Recalling that, $\|\lambda J^\lambda\|_{\mathcal{L}(H,H)} \leq 1$ for every $\lambda > 0$, it follows
 \begin{align}
    & \|u^\lambda_0(\omega)\|^2_H \leq \|u_0(\omega)\|^2_H \quad \mbox{for a.e.} \quad \omega \in \Omega \\
    & \|f^\lambda_t(\omega)\|^2_H \leq \|f(t,\overline{u}_t(\omega))\|^2_H \quad \mbox{for a.e.} \quad t \in [0,+\infty) \quad \mbox{and} \quad \omega \in \Omega \\
    & \|\sigma^\lambda_t(\omega)\|^2_{\mathcal L_2^0} \leq \|\overline{\sigma}_t(\omega)\|^2_{\mathcal L_2^0} \quad \mbox{for a.e.} \quad t \in [0,+\infty) \quad \mbox{and} \quad \omega \in \Omega.
    \end{align}
Now, by assumption $(H_3)$, and since by Proposition \ref{boundsol} there exists $\overline{C} > 0$ such that $\norma{\overline{u}}_\mathcal{C} \leq \overline{C}$, we have that
\begin{align}
    & \mathbb{E}\|f(t,\overline{u}_t)\|^2_H \leq C_f(t)(1+\mathbb{E}\|\overline{u}_t\|^2_H) \leq C_f(t)(1+\overline{C}), \quad \mbox{for a.e.} \; t \in [0,+\infty),
\end{align}
getting that $f(t,\overline{u}_t) \in L^2(\Omega,\mathcal L^0_2)$ for a.e. $t \in [0,+\infty)$. Analogously by $(H_6)$
\begin{align}
&\mathbb{E}\|\overline{\sigma}_t\|^2_{\mathcal L_2^0} \leq C_\Sigma(t)(1+\overline{C}) \quad \mbox{for a.e.} \; t \in [0,+\infty),
    \end{align}
    obtaining that $\overline{\sigma}_t \in L^2(\Omega,\mathcal L^0_2)$ for a.e. $t \in [0,+\infty)$ too. Taking into account the additional assumption $u_0 \in L^2(\Omega,H)$, by the Lebesgue Dominated Convergence Theorem, we get for a.e. $t \in [0,+\infty)$ that
\begin{align}
    & u^\lambda_0 \to u_0 \quad \mbox{in} \; L^2(\Omega,H) \\
    & f^\lambda_t \to f(t,\overline{u}_t) \quad \mbox{in} \; L^2(\Omega,H) \\
    & \sigma^\lambda_t \to \overline{\sigma}_t \quad \mbox{in} \; L^2(\Omega,\mathcal L_2^0).
    \end{align}    
Furthermore, recalling that $C_f$ and $C_\Sigma$ belong to $L^1([0,+\infty),\mathbb{R}_+)$ we can apply the Lebesgue Dominated Convergence Theorem also in $L^1([0,+\infty),L^2(\Omega,H))$ and in $L^1([0,+\infty),L^2(\Omega,\mathcal L^0_2))$ yielding
\begin{align}
    & f^\lambda \to f(\cdot,\overline{u}_\cdot) \quad \mbox{in} \; L^1([0,+\infty),L^2(\Omega,H)) \\
    & \sigma^\lambda \to \overline{\sigma} \quad \mbox{in} \; L^1([0,+\infty),L^2(\Omega,\mathcal L_2^0).
    \end{align} 
Finally, by \eqref{ulambdaconvergence} we obtain that $u^\lambda_t$ converges to $\overline{u}_t$ in $L^2(\Omega,H)$ for a.e $t \in [0,+\infty)$. Thus, by \eqref{disuguaglianza con u0}, we have
 \begin{equation}
    \mathbb{E}\norma{\overline{u}_t^-}^2_H \le \mathbb{E}\norma{(u^\lambda_t)^-}^2_H \le \widetilde{C} \, \mathbb{E}\norma{(u^\lambda_0)^-}^2_H \leq \widetilde{C} \, \mathbb{E}\norma{u_0^-}^2_H = 0, \quad \forall t \in [0,+\infty).
\end{equation}
From which it follows that the nonnegativity of $u_0$ implies the nonnegativity of $u_t$ for all $t \in [0,+\infty)$.
\end{proof}

\noindent If we assume the existence of at least one solution to \eqref{inclusione}, the nonnegativity of such solutions can still be obtained without requiring the condition $\mathbb{E}\norma{u_0}^2_H < \infty$. More precisely, we can establish the following result.
\begin{thm}
\label{th marinelli2}
Under the assumptions $(H_A)$, $(H_1) - (H_7)$, if in addition $A$ is the generator of a positive contraction semigroup and $u_0$ is nonnegative every mild solution $u \in \mathcal{C}$ of \eqref{inclusione} is nonnegative .
\end{thm}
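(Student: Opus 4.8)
The statement is vacuous unless \eqref{inclusione} admits a mild solution in $\mathcal{C}$, so the plan is to fix an arbitrary such solution $\overline{u}\in\mathcal{C}$, together with a companion selection $\overline{\sigma}\in L^2_{\mathscr{F}}([0,+\infty),\mathcal{L}_2^0)$ with $\overline{\sigma}_t\in\Sigma(t,\overline{u}_t)$ for a.e.\ $t$, obeying \eqref{eqmild}, and to show that $\overline{u}_t\ge 0$ for every $t$ by re-running, on this prescribed $\overline{u}$, the Yosida--regularization argument used for Theorem \ref{th marinelli}. The preliminary observation is that membership in $\mathcal{C}$ already supplies, for free, everything that proof extracted from the now-dropped standing assumption $\mathbb{E}\norma{u_0}^2_H<\infty$ together with Proposition \ref{boundsol}: setting $\overline{C}:=\norma{\overline{u}}_{\mathcal{C}}<\infty$ one has $\mathbb{E}\norma{\overline{u}_t}^2_H\le\overline{C}^2$ for every $t\in[0,+\infty)$ and, in particular, $\mathbb{E}\norma{u_0}^2_H=\mathbb{E}\norma{\overline{u}_0}^2_H\le\overline{C}^2<\infty$. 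So the integrability hypothesis is not really removed; it re-enters automatically as soon as a mild solution in $\mathcal{C}$ is postulated.

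Concretely, I would introduce, exactly as in the proof of Theorem \ref{th marinelli}, the Yosida approximations $u^\lambda_0=\lambda J^\lambda u_0$, $f^\lambda_s=\lambda J^\lambda f(s,\overline{u}_s)$, $\sigma^\lambda_s=\lambda J^\lambda\overline{\sigma}_s$, and the mild solution $u^\lambda$ of the regularized \emph{linear} equation \eqref{inclusionelambda}, which takes values in $D(A)$ and is therefore also a strong solution by Remark \ref{mild-strong}. Applying the It\^o formula of Proposition \ref{itoformula} to $\tfrac12\norma{(u^\lambda_t)^-}^2_H$, one discards the term $-\int_0^t\braket{Au^\lambda_s,(u^\lambda_s)^-}\,ds$ using dissipativity of $A$ together with positivity of $\{T(t)\}_{t\ge 0}$ (this is the identity $\braket{Au,u^-}\ge 0$ for $u\in D(A)$ established in the proof of Theorem \ref{th marinelli}), estimates the remaining deterministic terms by $2\int_0^t\norma{(u^\lambda_s)^-}^2_H\,ds$ through assumption $(H_7)$ and the positivity and contractivity of $\lambda J^\lambda$, localizes the resulting continuous local martingale by a sequence $(T_n)$, takes expectations (Tonelli plus the vanishing mean of the stochastic integral), applies Gronwall's inequality, and lets $n\to\infty$ with Fatou's lemma to reach
\begin{equation*}
\mathbb{E}\norma{(u^\lambda_t)^-}^2_H\le\widetilde{C}\,\mathbb{E}\norma{(u^\lambda_0)^-}^2_H,\qquad t\in[0,+\infty),
\end{equation*}
for a constant $\widetilde{C}>0$ independent of $\lambda$.

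The sign hypotheses then enter in two ways. First, since $\{T(t)\}_{t\ge 0}$ is positive, the resolvent $J^\lambda$ is a positive operator (it is the Laplace transform of the semigroup), so $u_0\ge 0$ forces $u^\lambda_0=\lambda J^\lambda u_0\ge 0$, i.e.\ $(u^\lambda_0)^-=0$ $\mathbf{P}$-a.s.; hence the right-hand side above is identically zero and $(u^\lambda_t)^-=0$ $\mathbf{P}$-a.s., that is $u^\lambda_t\ge 0$, for every $\lambda>0$ and every $t$. Second, to transfer the sign to $\overline{u}$ I would let $\lambda\to\infty$: using $\norma{\lambda J^\lambda}_{\mathcal{L}(H)}\le 1$ and $\lambda J^\lambda\to I$ in the strong operator topology, the growth bounds $(H_3)$ and $(H_6)$ evaluated along $\overline{u}$ with $\mathbb{E}\norma{\overline{u}_t}^2_H\le\overline{C}^2$, the integrability $C_f,C_\Sigma\in L^1([0,+\infty),\mathbb{R}_+)$, the It\^o isometry (Proposition \ref{itoisometry}), Theorem \ref{convergenceHilbertSchmidt} for the Hilbert--Schmidt convergence of $\sigma^\lambda$, and the dominated convergence theorem---it is precisely at the initial-datum term that $u_0\in L^2(\Omega,H)$, now available, is needed---one obtains $u^\lambda_t\to\overline{u}_t$ in $L^2(\Omega,H)$ for a.e.\ $t$. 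Closedness of the positive cone of $L^2(\Omega,H)$ then yields $\overline{u}_t\ge 0$ $\mathbf{P}$-a.s.\ for a.e.\ $t$, and the continuity of $t\mapsto\overline{u}_t$ in $L^2(\Omega,H)$ upgrades this to every $t\in[0,+\infty)$.

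I do not expect a genuinely hard estimate here: the analytic core is inherited essentially verbatim from the proof of Theorem \ref{th marinelli}. The main point to get right is the bookkeeping, namely verifying that $\mathbb{E}\norma{u_0}^2_H<\infty$ was the \emph{only} use of the dropped hypothesis and that replacing Proposition \ref{boundsol} by the tautological bound $\mathbb{E}\norma{\overline{u}_t}^2_H\le\norma{\overline{u}}_{\mathcal{C}}^2$ leaves every other step intact. The one delicate passage is the limit $\lambda\to\infty$: it must be carried out in a topology, here that of $L^2(\Omega,H)$, in which nonnegativity is preserved, and one must make sure the bound on $(u^\lambda_t)^-$ is genuinely $0$ rather than merely asymptotically small, which is exactly what positivity of the resolvent $J^\lambda$ secures.
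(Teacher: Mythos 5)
Your proposal is correct for the theorem as literally stated, and its analytic core coincides with the paper's: the paper's own Step~1 is precisely ``run the proof of Theorem \ref{th marinelli} on the given solution'', which is what you do, and your small deviations are harmless or even cleaner (the tautological bound $\mathbb{E}\norma{\overline{u}_t}_H^2\le\norma{\overline{u}}_{\mathcal C}^2$ in place of Proposition \ref{boundsol}; positivity of the resolvent giving $(u_0^\lambda)^-=0$ exactly, so that $u^\lambda_t\ge 0$ for every $\lambda$, followed by the $L^2(\Omega,H)$-limit and closedness of the positive cone, instead of the paper's final chain of inequalities). Where you genuinely diverge is that the paper's proof contains a second step which you declare unnecessary: it truncates the initial datum, $u_0^m=\mathbbm{1}_{\{u_0\le m\}}u_0$, freezes the coefficients along the given solution $v$ to form the auxiliary problem \eqref{inclusioneu0m}, applies Step~1 to obtain nonnegative solutions $u^m$ of the frozen problem, and passes to the limit using lower semicontinuity of the norm. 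That step is there because the theorem is announced (in the sentence preceding it) as dispensing with $\mathbb{E}\norma{u_0}_H^2<\infty$. Your observation that $u\in\mathcal C\subset C([0,+\infty),L^2(\Omega,H))$ together with $u(0)=u_0$ already forces $u_0\in L^2(\Omega,H)$ is correct under the paper's definition of mild solution, so your shortcut does prove the statement as written; but it does not deliver the intended extra generality (a nonnegative, merely $\mathscr{F}_0$-measurable $u_0$), which the truncation step is designed to reach and which, strictly speaking, cannot even be formulated within the solution class $\mathcal C$ as defined. In short: same method as the paper for the square-integrable case, with the paper's additional approximation-of-$u_0$ layer omitted; if the non-$L^2$ initial data are really meant to be covered, you would need to add an argument in the spirit of the paper's Step~2 (and a correspondingly weakened solution concept).
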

\begin{proof} The proof is divided into two steps.

\noindent {\bf Step 1.} We assume that $\mathbb{E}\norma{u_0}^2_H < \infty$. Following the proof of Theorem \ref{th marinelli} we get that the solution $u$ of \eqref{inclusione} is nonnegative.

\noindent {\bf Step 2.} Consider a mild solution $v$ of the problem \eqref{inclusione}, the sequence of random variables $\{u_0^m\}$, with $u_0^m=\mathbbm{1}_{\{u_0 \le m\}} u_0$, $m \in \mathbb{N}_+$ and the following family of stochastic differential inclusions with $u_0^m$ as initial condition:
\begin{equation}
\label{inclusioneu0m}
\begin{cases}
du_t \in [Au_t + f(t, v_t)]dt + \Sigma(t,v_t)dW_t, \quad t > 0 \\
u(0)=u_0^m.
\end{cases}
\end{equation} 
For every $m \in \mathbb{N}_+$, by the boundedness of $\{u_0^m\}$ and Step 1,  there exists a nonnegative mild solution $u^m$ of problem \eqref{inclusioneu0m} given, for every $t \in [0,+\infty)$, by
\begin{equation*}
u^m_t = T(t)u^m_0 + \int_0^t T(t-s)f(s, v_s) \, ds + \int_0^t T(t-s)\sigma_s \, dW_s, 
\end{equation*}
where $\sigma \in Sel_\Sigma (v)$. 
Moreover, trivially $u_0^m \to u_0$ in $H$, hence for every $t \in [0,+\infty)$
\begin{equation}
    u^m_t \to T(t)u_0 + \int_0^t T(t-s)f(s, v_s) \, ds + \int_0^t T(t-s)v_s \, dW_s = v_t.
\end{equation}
In conclusion, by the lower semicontinuity of the norm we have
\begin{equation}
    \mathbb{E}\|v_t^-\|^2_H \leq \mathbb{E}\|{u^m_t}^-\|^2_H = 0,
\end{equation}
obtaining the nonnegativity of $v$.
\end{proof}

\subsection*{Acknowledgments}

The second-named and third-named  authors are members of Gruppo Nazionale per l'Analisi Matematica, la Probabilità e le loro Applicazioni (GNAMPA) of Istituto Nazionale di Alta Matematica (INdAM). 
The second-named author was partially supported by  the European Union-Next Generation EU - PRIN research project n. 2022ZXZTN2.
The third-named author was partially supported by  the European Union-Next Generation EU - PRIN research project n. 2022FPLY97.

\bibliographystyle{plainnat}
\bibliography{biblio_ABC.bib}

\end{document}